\numberwithin{equation}{subsection}
\newtheorem{theorem}{Theorem}[subsection]
\newtheorem{lemma}[theorem]{Lemma}
\newtheorem{proposition}[theorem]{Proposition}
\newtheorem{corollary}[theorem]{Corollary}
\theoremstyle{definition}
\newtheorem{definition}[theorem]{Definition}
\newtheorem{remark}[theorem]{Remark}
\newtheorem{example}[theorem]{Example}
\newtheorem{examples}[theorem]{Examples}
\newcommand{\A}{\mathcal{A}}
\newcommand{\B}{\mathcal{B}}
\newcommand{\C}{\mathcal{C}}
\newcommand{\D}{\mathcal{D}}
\newcommand{\F}{\mathcal{F}}
\newcommand{\G}{\mathcal{G}}
\newcommand{\Hcal}{\mathcal{H}}
\newcommand{\Ocal}{\mathcal{O}}
\newcommand{\Scal}{\mathcal{S}}
\newcommand{\W}{\mathcal{W}}
\newcommand{\X}{\mathcal{X}}
\newcommand{\Y}{\mathcal{Y}}
\newcommand{\Z}{\mathcal{Z}}
\newcommand{\Ab}{\mathbf{Ab}}
\newcommand{\Set}{\mathbf{Set}}
\newcommand{\Simpl}{\mathbf{\Delta}}
\newcommand{\Simplop}{\mathbf{\Delta}^\op}
\newcommand{\Sset}{s\mathbf{S}}
\newcommand{\Sp}{\mathbf{Sp}}
\newcommand{\loc}[2]{\mathcal{#1}[\mathcal{#2}^{-1}]}
\newcommand{\rdf}{\mathbb{R}}
\newcommand{\quis}{\textit{quis}}
\newcommand{\Cochains}[1]{\mathbf{C}^*(#1)}
\newcommand{\Cochainsp}[1]{\mathbf{C}^{\geq b}(#1)}
\newcommand{\dCochains}[1]{\mathbf{C}^{**}(#1)}
\newcommand{\Sheaf}{\mathbf{Sh}}
\newcommand{\Sh}[2]{\mathbf{Sh}(#1,#2)}
\newcommand{\PrSh}[2]{\mathbf{PrSh}(#1,#2)}
\newcommand{\y}{\mathbf{y}}
\newcommand{\Tot}{\mathbf{Tot}}
\newcommand{\dirlim}{\underset{\longrightarrow}{\mathrm{colim}}\,}
\newcommand{\invlim}{\underset{\longleftarrow}{\mathrm{lim}}\,}
\newcommand{\holim}{\underset{\longleftarrow}{\mathrm{holim}}\,}
\newcommand{\op}{\mathrm{op}}
\newcommand{\id}{\mathrm{id}}
\newcommand{\simple}{\mathbf{s}}
\newcommand{\mc}[1]{\mathcal{#1}}
\newcommand{\mrm}[1]{\mathrm{#1}}
\newcommand{\mbf}[1]{\mathbf{#1}}
\newcommand{\mbb}[1]{\mathbb{#1}}
\newcommand{\Dl}{\ensuremath{\Simpl} }
\newcommand{\ds}{\displaystyle}
\begin{document}
\begin{large}

\title[Godement resolutions]{Godement resolutions and sheaf homotopy theory}

\author{Beatriz Rodr\'{\i}guez Gonz\'{a}lez}
\address{ICMAT\\ CSIC-Complutense-UAM-CarlosIII\\ Campus Cantoblanco, UAM. 28049 Madrid, Spain.}

\author{Agust\'{\i} Roig}
\address{Dept. Matem\`{a}tica Aplicada I\\ Universitat Polit\`{e}cnica de Catalunya, UPC \\ Diagonal 647, 08028 Barce\-lo\-na, Spain.}

\thanks{First named author partially supported by ERC Starting Grant project TGASS and by
contracts SGR-119 and FQM-218. Second named author partially supported by projects MTM2009-09557, 2009 SGR 119 and MTM2012-38122-C03-01/FEDER. 
To appear in Collectanea Mathematica. The final publication is available at Springer via 
\href{http://dx.doi.org/10.1007/s13348-014-0123-x}{http://dx.doi.org/10.1007/s13348-014-0123-x}}

\begin{abstract} The Godement cosimplicial resolution is available for a wide range of categories of sheaves. In this paper we investigate under which conditions of the Grothendieck site and the category of coefficients it can be used to obtain fibrant models and hence to do sheaf homotopy theory. For instance, for which Grothendieck sites and coefficients we can define sheaf cohomology and derived functors through it.
\end{abstract}

\date{\today}
\maketitle
\tableofcontents

\section{Introduction}

\subsubsection{} Godement resolutions have been an essential tool in sheaf homotopy theory and its applications almost from the start \cite{Go} and keep cropping up in different contexts: see for instance \cite{SGA4} for abelian sheaves on a Grothendieck site, \cite{Th} for sheaves of spectra on a Grothendieck site, \cite{N} for sheaves of (filtered) dg commutative algebras over topological spaces, \cite{MV} for simplicial sheaves on a Grothendieck site, \cite{SdS} for sheaves of $\Ocal_X$-modules over schemes, or \cite{GL}  and \cite{Ba}  for sheaves of DG-categories over schemes..., to name but a few.

In particular, the great flexibility of the \emph{cosimplicial} Godement resolution, together with its excellent functorial properties, appear to account for its omnipresence: in fact, in order to define it for a sheaf $\F : \X^\op \longrightarrow \D$ on a Grothendieck site with enough points $\X$ and values in some category of coefficients $\D$, we only need $\D$ to have filtered colimits and arbitrary products. In this situation, we obtain a functor
$$
G^\bullet :  \Sheaf (\X , \D) \longrightarrow \Delta \Sheaf (\X , \D)
$$
from sheaves on $\X$ with values in $\D$ to cosimplicial ones.

The question we address in this paper is the following: under which conditions for the Grothen\-dieck site $\X$ and the category of coefficients $\D$ can the cosimplicial Godement construction be used to transfer homotopical structure from $\D$ to the category of sheaves $\Sheaf (\X, \D)$?

\subsubsection{} Let us elaborate a little further.
Making use of a (realization of the) homotopy limit $\simple : \Delta\D \longrightarrow \D$, which we call a \textit{simple} functor,
we can \lq\lq reassemble" all the cosimplicial pieces of
$G^p\F$ obtaining a single sheaf which might be entitled to be a \lq\lq model" for $\F$. To get anchorage for her ideas,
the reader may think of $\D$ as being the category of cochain complexes of abelian groups $\Cochains{\Ab}$ and $\simple$ the total
complex of a double complex. In this way we obtain a sheaf together a universal map
\begin{equation}\label{hypercohomologysheaf}
\rho_\F : \F \longrightarrow \mathbb{H}_\X (\F) = \simple G^\bullet (\F) \ ,
\end{equation}
called here the \emph{hypercohomology sheaf of} $\F$ following
Thomason and Mitchell (\cite{Th}, \cite{Mit}).

So a particular instance of our initial question is the following: assume that $\X$ has a final object $X$, when would it make sense to define sheaf cohomology of $\X$ with coefficients in $\F$ as $\Gamma (X, \mathbb{H}_\X (\F))$? More precisely, we are asking when this formula would define a right derived functor in the sense of Quillen \cite{Q}; that is, a left Kan extension.

\subsubsection{} In order to talk about derived functors and homotopy categories, we need to specify the class of morphisms with respect to which we localize. In all the examples we are aware of, this is the class that keeps track of the topology of $\X$, the one of \emph{local equivalences}: we have a distinguished class of morphisms $\mathrm{E}$, or \lq\lq equivalences", in the category of coefficients $\D$; and, for a morphism of sheaves $\varphi : \F \longrightarrow \G$ to be called a local equivalence, we require every morphism induced on stalks $\varphi_x : \F_x \longrightarrow \G_x$ to be in $\mathrm{E}$. Let us note this class of local equivalences as $\W$. For instance, for $\D = \Cochains{\Ab}$ we could take $\mathrm{E}$ to be the class of \emph{quasi-isomorphisms}, $\quis$, morphisms which induce isomorphisms in cohomology.

\subsubsection{}  A first approach could be to study our question in the context of Quillen model categories. That is, to assume that our
coefficient category $(\D,\mrm{E})$ supports a Quillen model structure and that it induces one on $(\Sheaf (\X , \D),\W)$ in such a way that the Godement resolution becomes a fibrant model for every sheaf. As proved in
\cite{Be1} and \cite{Be2}, this is indeed possible under certain (non-trivial) hypotheses on the model category $(\D,\mrm{E})$.

Instead, we opt here to keep to the minimum the amount of structure
on $(\Sheaf (\X, \D),\W)$ necessary to have the sheaves $\mathbb{H}_\X (\F)$ as fibrant models.
This allows us to 1) cover a more general class of coefficient categories (e.g. filtered complexes
over any $($AB4$)^{\ast}$ and $($AB5$)$ abelian category) and 2) have more flexibility in the transference of
the resulting technique to the multiplicative setting. This last point is the subject of a forthcoming sequel to this paper,
where we transfer the results obtained here for $\Sheaf (\X, \D)$ to sheaf of operads and algebras
(over any operad) on $\D$, and their corresponding filtered versions.

One such minimal amount of structure is attained with \emph{Cartan-Eilenberg categories}, or CE-categories, for short: an approach to
homotopical algebra started in \cite{GNPR1} and further developed in \cite{P}, \cite{C1} and \cite{C2}. A (right) CE-category
consists of a category $\C$ endowed with two classes of distinguished morphisms, \emph{strong} and \emph{weak} equivalences, $\Scal \subset \W$, and a \emph{CE-fibrant} model for each object (see \ref{defCEcategory} for the precise definition). The name of these structures comes from the classic book \cite{CE}, where, in modern parlance, the homotopy theory of the category of cochain complexes $\Cochains{\Ab}$ is developed around two classes of distinguished morphisms: homotopy equivalences ($\Scal$) and $\quis$  ($\W$).

But CE-structures allow more freedom of choice for classes $\Scal$ and $\W$ than classical \lq\lq homotopy equivalences" and \lq\lq weak equivalences". This is particularly interesting for our categories of sheaves, for which the natural choices are:

\begin{itemize}
\item \emph{global} equivalences,  as $\Scal$: those morphisms of sheaves such that $\varphi (U) : \F (U) \longrightarrow \G (U)$ belongs to the class of equivalences $\mathrm{E}$ in $\D$ for every object (open set) $U \in \X$, and
\item \emph{local} equivalences, as $\W$: already mentioned above.
\end{itemize}

In order to provide our categories of sheaves $\Sheaf (\X , \D)$ with a CE-structure, we need very few elements in our category of
coefficients $\D$: essentially, our needs reduce to a class of equivalences $\mathrm{E}$ and a simple functor
$\simple : \Delta \D \longrightarrow \D$ which is a realization of the homotopy limit.
This is summarized in the notion of
\emph{descent category} (see \cite{Rod1}, \cite{Rod2} and the second section in this paper; cf. also \cite{GN}).

\subsubsection{} Our main result (Theorem \ref{CartanEilenbergSheaves}) provides equivalent conditions
guaranteeing that our initial question has a positive answer:

\begin{theorem} Let $\X$ be a Grothendieck site and $(\D, \mathrm{E})$ a descent
category satisfying the hypotheses \emph{(\ref{hipotesis})}. Then, the
following statements are equivalent:
\begin{enumerate}
 \item[\emph{(1)}] $(\Sh{\X}{\D}, \Scal, \mc{W})$ is a right Cartan-Eilenberg category and for every sheaf $\F \in \Sh{\X}{\D}$,
$\rho_\F : \F \longrightarrow \mbb{H}_{\X} (\F)$ is a CE-fibrant model.
 \item[\emph{(2)}] For every sheaf $\F\in \Sh{\X}{\D}$, $\rho_\F : \F\longrightarrow \mbb{H}_{\X} (\F)$ is in $\mc{W}$.
 \item[\emph{(3)}] The simple functor commutes weakly with stalks.
 \item[\emph{(4)}] For every sheaf $\F\in \Sh{\X}{\D}$, $\mbb{H}_{\X} (\F)$ satisfies Thomason's descent; that is, $\rho_{\mbb{H}_{\X} (\F)} :
\mbb{H}_{\X} (\F) \longrightarrow \mathbb{H}^2_\X (\F)$ is in $\Scal$.
\end{enumerate}
\end{theorem}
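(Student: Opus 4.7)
The plan is to prove the cycle of implications $(1) \Rightarrow (2) \Rightarrow (3) \Rightarrow (4) \Rightarrow (1)$, exploiting the cotriple structure underlying the Godement construction at the two key steps $(2) \Leftrightarrow (3)$ and $(3) \Rightarrow (4)$.

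\textbf{Step 1} ($(1) \Rightarrow (2)$). By definition, a CE-fibrant model must be a morphism in $\W$, so this implication is immediate from the definition recalled in \ref{defCEcategory}.

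\textbf{Step 2} ($(2) \Rightarrow (3)$). I would use that $G^\bullet$ is the cosimplicial object associated with the cotriple $T = \prod_{x} x^* x_*$ built from the points of $\X$. For any point $x$, the stalk $(G^\bullet \F)_x$ of the augmented cosimplicial sheaf $\F \to G^\bullet \F$ inherits from the counit of $T$ a system of extra codegeneracies, so in the descent category $(\D, \mathrm{E})$ the augmentation
$$
\F_x \longrightarrow \simple\bigl( (G^\bullet \F)_x \bigr)
$$
lies in $\mathrm{E}$. The stalk of $\rho_\F$ factors as this arrow followed by the comparison map $\simple((G^\bullet \F)_x) \to (\simple G^\bullet \F)_x$, so the hypothesis that $(\rho_\F)_x \in \mathrm{E}$ forces, by 2-out-of-3, the comparison map itself to be in $\mathrm{E}$ for every $\F$ and every point $x$. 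The reduction of ``$\simple$ commutes weakly with stalks'' from arbitrary cosimplicial sheaves to those of the Godement form is obtained by resolving a given cosimplicial sheaf $\F^\bullet$ degreewise by $G^\bullet$ and invoking the compatibility of $\simple$ with levelwise equivalences that is built into the descent category axioms.

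\textbf{Step 3} ($(3) \Rightarrow (4)$). Here I would analyze the bicosimplicial sheaf $G^\bullet G^\bullet \F$. The outer Godement augmentation $G^\bullet \F \to G^\bullet G^\bullet \F$ carries extra codegeneracies coming from the cotriple, making it sectionwise a cosimplicial homotopy equivalence over every open $U \in \X$. Using hypothesis (3) to commute $\simple$ past the stalk structure encoded in the outer $G^\bullet$, these extra codegeneracies survive the passage through $\simple$ and the resulting map
$$
\mathbb{H}_{\X}(\F)(U) \longrightarrow \mathbb{H}^2_{\X}(\F)(U)
$$
lies in $\mathrm{E}$ for every $U$, which is exactly $\rho_{\mathbb{H}_{\X}(\F)} \in \Scal$.

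\textbf{Step 4} ($(4) \Rightarrow (1)$). Chaining $(4) \Rightarrow (3) \Rightarrow (2)$ via the previous two steps provides $\rho_\F \in \W$, while (4) itself asserts that the target $\mathbb{H}_\X(\F)$ is $\Scal$-fibrant. These are precisely the two pieces required by the definition of right CE-category, and together with the functoriality of $G^\bullet$ and $\simple$ they certify that $\rho_\F$ is a CE-fibrant model.

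The main obstacle I anticipate is Step 3. The delicate point is promoting a hypothesis formulated stalkwise (condition (3)) into a sectionwise, i.e.\ global, equivalence (Thomason's descent). This asymmetry is overcome only because the sections of $G^\bullet \F$ over $U$ are themselves assembled from products indexed by points of $U$, so the cotriple-induced extra codegeneracies are visible at the level of sections, and (3) is just strong enough to guarantee that $\simple$ does not destroy them. Packaging this into a clean bicosimplicial/Eilenberg--Zilber style argument, while keeping track of the compatibilities encoded by the descent category structure on $\D$, will be the technical heart of the proof.
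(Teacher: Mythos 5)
Your outline $(1)\Rightarrow(2)\Rightarrow(3)\Rightarrow(4)\Rightarrow(1)$ breaks down at the last arrow, and this is a genuine gap rather than a presentational one. In Step 4 you propose to obtain $(4)\Rightarrow(1)$ by ``chaining $(4)\Rightarrow(3)\Rightarrow(2)$ via the previous two steps'', but Steps 2 and 3 establish exactly the opposite implications: Step 2 gives $(2)\Leftrightarrow(3)$ (the 2-out-of-3 argument there is indeed reversible, and is essentially the paper's argument, modulo the fact that you write the comparison map in the wrong direction --- the canonical $\theta$ goes $p^{\ast}\simple\to\simple p^{\ast}$ --- and the fact that the paper's notion of weak commutation with stalks only concerns objects of the form $G^{\bullet}\F$, so your proposed ``reduction from arbitrary cosimplicial sheaves'' is both unnecessary and, in general, false). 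Step 3, however, only uses (3) as a hypothesis to push $\simple$ past the outer Godement resolution; nothing in your sketch recovers (3) or (2) from the purely global statement (4). The paper closes this direction with two ingredients absent from your proposal: the identity $\W=\mbb{H}_{\X}^{-1}\Scal$ (Proposition \ref{localeq}), whose proof rests on a retract argument using a natural section of $T(\rho_\F)$ in $\Sh{\X}{\D}[\Scal^{-1}]$ (Lemma \ref{retracto}), and the bicosimplicial comparison showing $\rho_{\mbb{H}_{\X}(\F)}\in\Scal$ if and only if $\mbb{H}_{\X}(\rho_\F)\in\Scal$, where the unconditional fact that $\simple_{\circ}(\theta'^{\circ}_{G^{\bullet}(\F)})$ composed with $\rho_{\mbb{H}_{\X}(\F)}$ is always in $\Scal$ (by extra degeneracies plus the Fubini Lemma \ref{Diagonal}) lets one run 2-out-of-3 in both directions. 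Without some such mechanism, $(4)$ remains logically disconnected from the other three conditions.

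A second, related gap: even granting $\rho_\F\in\W$ for all $\F$, concluding (1) requires knowing that $\mbb{H}_{\X}(\F)$ is CE-fibrant in the sense of the unique lifting property of Definition \ref{fibrant}. You treat this as if it were what condition (4) asserts, but Thomason descent is the statement $\rho_{\mbb{H}_{\X}(\F)}\in\Scal$, and its equivalence with CE-fibrancy (Corollary \ref{Thomasondescentproperty}) is a \emph{consequence} of the theorem, not an input. The paper proves CE-fibrancy of $\mbb{H}_{\X}(\F)$ unconditionally beforehand (Proposition \ref{GodementFibrante}), constructing the lift $g=\sigma_{\F}\,\mbb{H}_{\X}(f)\,(\mbb{H}_{\X}(w))^{-1}\,\rho_{\mbb{H}_{\X}(\F)}$ and proving its uniqueness, again from the natural section $\sigma$ of Lemma \ref{retracto}. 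Your Steps 1--3, suitably completed, do capture the paper's treatment of $(1)\Leftrightarrow(2)\Leftrightarrow(3)$ and the forward half of the link to (4); what is missing is precisely the retract/section machinery that makes the remaining implications go through.
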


This theorem shows, first, that the existence of a CE-structure on the category of sheaves $\Sheaf (\X , \D)$ boils down to the property that for every sheaf $\F$ the universal arrow $\rho_\F : \F \longrightarrow \mathbb{H}_{\X} (\F)$ is a local equivalence (condition $(2)$). Hence, we need nothing else that this CE-structure to answer our problem; i.e., the fact that the Godement construction can be used to transfer homotopical structure from $\D$ to the category of sheaves $\Sh{\X}{\D}$ is equivalent to the existence of this CE-structure.

The theorem also shows that the fact of the Godement resolution being a CE-fibrant model is equivalent to Thomason's classic descent (for sheaves of spectra \cite{Th}, condition $(4)$; see also Corollary \ref{Thomasondescentproperty}). So being CE-fibrant is quite a natural and central notion for sheaves.

Finally, the theorem gives a down-to-earth equivalent condition for all this to happen, which will be the one we will use in practice: condition $(3)$ says that the simple functor $\simple$ must commute with stalks up to local equivalence. For instance, for bounded cochain complexes this is a consequence of the commutation of the total complex functor $\Tot $ with filtered colimits.

\subsubsection{Acknowledgements} This paper develops an idea suggested to us by Vicente Navarro. We owe him a debt of gratitude for sharing it with us. The second named
author also benefited from many fruitful conversations with Pere Pascual. We are indebted to Francisco Guill\'{e}n, Fernando Muro, Luis Narv\'{a}ez and Abd\'{o} Roig for their comments. People at sci.math.research and Mathoverflow made useful suggestions kindly answering our questions there.

\section{Homotopical preliminaries}

We introduce here the definitions and results concerning descent and CE-categories necessary for our paper. The interested reader may consult \cite{Rod1}, \cite{Rod2} and \cite{GNPR1} for further details.

\subsection{Descent categories}

\subsubsection{Notations} By $\Simpl$ we mean the \textit{simplicial category}. We denote by $\Simpl\mc{D}$ (resp. $\Simplop\mc{D}$) the category of cosimplicial (resp.  simplicial) objects in a fixed category $\mc{D}$. The \textit{diagonal functor} $\mrm{D}:\Simpl\Simpl\mc{D}\longrightarrow\Simpl\mc{D}$ is given by
$\mrm{D}(\{Z_{n,m}\}_{n,m\geq 0})=\{Z_{n,n}\}_{n\geq 0}$. The \textit{constant simplicial object} defined by $A\in\mc{D}$ will be denoted by $c(A)$ or by $A\times\Dl$.

\subsubsection{} A (cosimplicial) descent category consists, roughly, of a category $\D$ endowed with a class $\mrm{E}$ of
`weak equivalences' and with a `simple' functor $\simple : \Simpl\D \longrightarrow \D$ subject to the axioms below.
These axioms ensure that $\simple$ is a realization of the homotopy limit for cosimplicial objects, and that the localized
category $\D[\mrm{E}^{-1}]$ possesses a rich homotopical structure.

\begin{definition}\label{defdescentcategory}\cite[1.1]{Rod1} A \textit{$($cosimplicial$)$ descent category} is the data
$(\mc{D},\mrm{E},\mbf{s},\mu,\lambda)$ where $\mc{D}$ is a category closed under finite products and $\mrm{E}$ is a saturated class
of morphisms of $\mc{D}$, closed under finite products, called \textit{weak equivalences}. The triple $(s,\mu,\lambda)$ is subject to the following axioms:

\begin{enumerate}
\item[(S1)] The simple functor $\mbf{s}:\Dl\mc{D}\longrightarrow \mc{D}$ commutes with finite products up to equivalence.
That is, the canonical morphism $\mbf{s}(X\times Y)\longrightarrow \mbf{s}(X) \times \mbf{s}(Y)$ is in $\mrm{E}$ for all $X$, $Y$ in $\Simpl\mc{D}$.

\item[(S2)] $\mu:\mbf{s}\mathbf{s}\dashrightarrow\mbf{s}\mrm{D}$ is a zigzag of natural weak equivalences. Recall that $\mbf{s}\mrm{D}Z$ denotes the simple
of the diagonal of $Z$, while $\mbf{s}\mbf{s}Z=\mbf{s}(n\longrightarrow\mbf{s}(m\rightarrow
Z_{n,m}))$.

\item[(S3)] $\lambda:\id_{\mc{D}}\dashrightarrow\mbf{s}(-\times\Dl)$ is a zigzag of natural weak equivalences, which is assumed to be compatible with $\mu$ in the sense of \emph{op.cit.}.

\item[(S4)] If $f:X\longrightarrow Y$ is a morphism in $\Dl\mc{D}$ with $f_n \in \mrm{E}$ for all $n$,
then $\mbf{s}(f)\in\mrm{E}$.

\item[(S5)] The image under the simple functor of the cosimplicial map $A^{d_0}:A^{\Dl[1]}\longrightarrow A $ is a weak equivalence for each object $A$ of $\mc{D}$.
\end{enumerate}
\end{definition}

For the sake of brevity, we will also denote a descent category by
$(\mc{D},\mrm{E})$.

\begin{remark}\label{LambdaMuFlechas} The presence of zigzags in the definition of descent category is needed to ensure its homotopy invariance
(see \cite{Rod1}, Proposition 1.8). However, every example used in this paper has both $\mu$ and $\lambda$ as actual natural transformations (see Examples \ref{example1bounded} - \ref{example1simplicial}). Since this significantly simplifies exposition, we will assume they are so for the descent categories considered
throughout the paper. We will also assume that simple functors preserve limits. But this is not a major restriction: it is fulfilled by all our examples of descent categories so far.
\end{remark}

\subsubsection{} Among the hereditary results of descent categories, let us point out one we will be using time and again and whose proof we leave as an easy exercise for the interested reader:

\begin{lemma}[Transfer Lemma]\label{transferLema} Let $(\mc{D'},\mrm{E'},\mbf{s'},\mu',\lambda')$ be a descent category. Given a functor
$\psi:\mc{D}\longrightarrow\mc{D'}$, consider in $\mc{D}$ the weak equivalences $\mrm{E}=\psi^{-1}\mrm{E}'$. Assume that $\mc{D}$ has finite products and is equipped with a functor $\mbf{s}:\Simpl\mc{D}\longrightarrow \mc{D}$, together with
compatible natural weak equivalences $\mu:\mbf{s}\mbf{s}\longrightarrow\mbf{s}\mrm{D}$ and $\lambda:\id_{\mc{D}}\longrightarrow \mathbf{s}(-\times \Dl)$.
Then, $(\mc{D},\mrm{E},\mbf{s},\mu,\lambda)$ is a descent category provided the following statements hold:%
\begin{enumerate}
\item[$($FD1$)$] $\psi$ commutes with finite products up to equivalence. That is, the natural map $\psi(X\times Y) \longrightarrow \psi(X)\times \psi(Y)$ is in $\mrm{E}$ for all $X,Y$ in $\mc{D}$.
\item[$($FD2$)$] There exists a natural weak equivalence $\theta: \psi\,\mbf{s} \longrightarrow \mbf{s'}\,\psi $ filling the square
$$
\xymatrix@H=4pt@C=20pt@R=20pt{\Simpl\mc{D} \ar[r]^{\psi}\ar[d]_{\mbf{s}} & \Simpl\mc{D'} \ar[d]^{\mbf{s'}}\\
                                \mc{D}\ar[r]_{\psi}  \ar@{}[ru]|{\stackrel{\theta}{\mbox{\rotatebox[origin=c]{33}{$\Rightarrow$}}}}     {}  & {} \mc{D'}
                                }
$$
\end{enumerate}
\end{lemma}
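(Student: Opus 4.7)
The plan is to verify axioms (S1), (S4) and (S5) for the transferred data $(\mc{D},\mrm{E},\mbf{s},\mu,\lambda)$; axioms (S2), (S3) and the compatibility between $\mu$ and $\lambda$ are built into the hypotheses. The uniform device throughout is a diagram chase: apply $\psi$, translate everything to $\mc{D}'$ by means of the natural weak equivalence $\theta$ of (FD2) and the binary comparison of (FD1), invoke the corresponding axiom of the descent structure on $\mc{D}'$, and conclude with 2-out-of-3 inside $\mrm{E}'$ (which is saturated) together with the defining identity $\mrm{E}=\psi^{-1}\mrm{E}'$.

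As a preliminary I would record the formal properties of $\mrm{E}$: since $\mrm{E}'$ is saturated, $\mrm{E}=\psi^{-1}\mrm{E}'$ inherits 2-out-of-3, closure under retracts, and contains the isomorphisms (full saturation follows from the universal property of localization together with the fact that $\psi$ factors through the two localizations). For closure under finite products: if $f_1,f_2\in\mrm{E}$ then $\psi f_1\times\psi f_2\in\mrm{E}'$, while (FD1) places the natural comparison $\psi(f_1\times f_2)\to\psi f_1\times\psi f_2$ in $\mrm{E}'$, so 2-out-of-3 gives $\psi(f_1\times f_2)\in\mrm{E}'$.

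For (S1), I would compare the comparison morphism $\mbf{s}(X\times Y)\to\mbf{s} X\times\mbf{s} Y$ in $\mc{D}$ with its analogue $\mbf{s}'(\psi X\times\psi Y)\to\mbf{s}'\psi X\times\mbf{s}'\psi Y$ in $\mc{D}'$ inside a square whose connecting edges are built from $\theta$, $\theta\times\theta$, and the two instances of (FD1) (one of them applied levelwise, whose $\mbf{s}'$-image is in $\mrm{E}'$ by (S4) for $\mbf{s}'$). All connecting edges lie in $\mrm{E}'$, the bottom edge lies in $\mrm{E}'$ by (S1) for $\mbf{s}'$, so 2-out-of-3 forces the top edge into $\mrm{E}'$. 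Axiom (S4) is the cleanest: if $f$ is levelwise in $\mrm{E}$ then $\psi f$ is levelwise in $\mrm{E}'$, hence $\mbf{s}'\psi f\in\mrm{E}'$ by (S4) for $\mbf{s}'$, and the naturality square of $\theta$ at $f$ transfers this to $\psi\mbf{s} f\in\mrm{E}'$.

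I expect (S5) to be the main obstacle. The cotensor is built from iterated finite products, $(A^{\Dl[1]})^n=A^{n+2}$, so from (FD1) I would assemble by induction on $n$ a comparison $\psi(A^{n+2})\to(\psi A)^{n+2}$ in $\mrm{E}'$, check its naturality with respect to cofaces and codegeneracies so that it yields a cosimplicial morphism $\psi(A^{\Dl[1]})\to(\psi A)^{\Dl[1]}$ lying levelwise in $\mrm{E}'$, and observe that $\psi c(A)=c(\psi A)$ (the constant cosimplicial object is preserved by any functor). Then $\mbf{s}'$ of the first morphism is in $\mrm{E}'$ by (S4) for $\mbf{s}'$, $\mbf{s}'((\psi A)^{d^0})$ is in $\mrm{E}'$ by (S5) for $\mbf{s}'$, and $\theta_{A^{\Dl[1]}}$ is in $\mrm{E}'$ by (FD2); chasing these in the evident diagram and applying 2-out-of-3 yields $\psi\mbf{s}(A^{d^0})\in\mrm{E}'$, hence $\mbf{s}(A^{d^0})\in\mrm{E}$, as required.
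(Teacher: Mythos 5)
Your argument is correct and is exactly the intended one: the paper leaves this lemma as an ``easy exercise,'' and your uniform strategy (transport each axiom along $\theta$ and the (FD1) comparison, use the corresponding axiom for $\mbf{s}'$, then conclude by $2$-out-of-$3$ in the saturated class $\mrm{E}'$ and the definition $\mrm{E}=\psi^{-1}\mrm{E}'$) is the standard way to fill it in. You also correctly identify the only points needing real care, namely the saturation and product-closure of $\psi^{-1}\mrm{E}'$ and the cosimplicial comparison $\psi(A^{\Delta[1]})\to(\psi A)^{\Delta[1]}$ needed for (S5), and your treatment of each is sound.
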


\subsubsection{}\label{examples1} To end with, we describe some examples of descent categories.

\begin{example}\label{example1bounded} \textbf{Bounded complexes {$[$Rod1, (3.4)$]$}.} Let $\A$ be an abelian category.
For a fixed integer $b \in \mathbb{Z}$, denote by $\Cochainsp{\A}$ the category of uniformly bounded below cochain complexes of $\A$;
that is, $A^n = 0$ for all $n < b$ and all $A^* \in \Cochainsp{\A}$.

We will consider the following descent structure on $\Cochainsp{\A}$. The weak equivalences $\mrm{E}$ are the {\it quasi-isomorphism\/} (\quis ): those maps inducing isomorphism in cohomology.
The simple functor $\mbf{s}:\Simpl\Cochainsp{\A}\longrightarrow \Cochainsp{\A}$ at a given cosimplicial cochain complex
$A$ is the (product) total complex of the double complex induced by $A$:
$$
\mbf{s}(A)^n = \prod_{p+q =n} A^{pq} \ .
$$
Which, in this case, since $A$ has finite codiagonals, $\mbf{s}(A)^n =\bigoplus_{p+q =n} A^{pq}$. $\mu_Z$ is just the Alexander-Whitney map $\mbf{s}\mbf{s}Z\longrightarrow \mbf{s}\mrm{D}Z$ and $\lambda_X^n : X^n\longrightarrow \mbf{s}(X\times\Dl)^n$ is the canonical inclusion.
\end{example}

\begin{example}\label{example1unbounded} \textbf{Unbounded complexes.} The category $\Cochains{\A}$ of \textit{unbounded} cochain complexes of $\A$ is
also a descent category with weak equivalences, simple functor, $\mu$ and $\lambda$ defined as in the bounded case provided axiom (S4) holds. For instance, this is the case when $\A=R$-modules.
\end{example}

\begin{example}\label{example1simplicial} \textbf{Simplicial model categories {$[$Rod1, Theorem 3.2$]$}.} The subcategory of fibrant objects $\mc{M}_f$ of
a model category $\mc{M}$ is a descent category where $\mrm{E}$ is the class of weak equivalences of $\mc{M}$ and the
simple functor is the Bousfield-Kan homotopy \cite{BK} limit, $\holim :\Dl\mc{M}_f\longrightarrow\mc{M}_f$, as defined in \cite{Hir}. If $\mc{M}$ is a simplicial model category, the homotopy limit of a cosimplicial object $X$ is the end of the bifunctor
$X^{\mrm{N}(\Dl\downarrow \cdot)}:\Dl^{op}\times\Dl\longrightarrow\mc{M}_f$, $(n,m)\mapsto (X^m)^{\mrm{N}(\Dl\downarrow n)}$, that is,
$$
\holim X=\int_n (X^n)^{\mrm{N}(\Dl\downarrow n)} \ .
$$

Morphisms $\mu$ and $\lambda$ are easily defined using that a functor $F:\mc{B}\longrightarrow\mc{C}$ induces a natural map
$ \holim_{\mc{C}}X\longrightarrow\holim_{\mc{B}}F^\ast X$.

Two particular instances of this example are relevant when talking about sheaf cohomology theories. First,
the category $\Sset_f$ of pointed Kan complexes, with weak equivalences the weak homotopy equivalences.
Secondly, the category $\Sp_f$ of pointed fibrant
spectra, as defined in \cite[5.2]{Th}. The weak equivalences for the descent structure are then the
\textit{stable weak equivalences}; that is, morphisms of
spectra inducing bijections in all homotopy groups.
\end{example}

\begin{example}\label{examplefilteredcomplexes} \textbf{Filtered complexes.} Denote by $\textbf{F}\Cochainsp{\A}$ the category of
filtered complexes, with objects the pairs $(A,\mrm{F})$ where $A$ is in $\Cochainsp{\A}$ and $\mrm{F}$ is a decreasing filtration of $A$.
Given $r\geq 0$, consider the class $\mrm{E}_r$ of weak equivalences given by the $\mrm{E}_r$-quasi-isomorphisms of $\textbf{F}
\Cochainsp{\A}$, that is, morphisms of filtered complexes such that the induced morphism
between the $\mrm{E}_{r+1}$-terms of the spectral sequences associated with the filtrations is an isomorphism.

It holds that $(\textbf{F}\Cochainsp{\A},\mrm{E}_r)$
is a descent category with simple functor $(\mbf{s},\delta_r):\Dl\textbf{F}\Cochainsp{\A}\rightarrow \textbf{F}\Cochainsp{\A}$
defined as $(\mbf{s},\delta_r)(A,\mrm{F})=(\mbf{s}(A),\delta_r(\mrm{F}))$ where
$$\delta_r(\mrm{F})^k(\mbf{s}(A)^n) = \ds\bigoplus_{i+j=n}\mrm{F}^{k-ri}A^{i,j} \quad ,$$
and with natural transformations $\lambda$ and $\mu$ given at the level of complexes by those of $\Cochainsp{\A}$.

If $r=0$, note that an $\mrm{E}_0$-isomorphism is the same thing as a graded quasi-isomorphism. Also,
$(\mbf{s},\delta_0)(A,\mrm{F})$ is just $(\mbf{s}(A),\mbf{s}(\mrm{F}))$. The fact that this is a simple functor for
$(\textbf{F}\Cochainsp{\A},\mrm{E}_0)$ is an easy consequence of the transfer lemma applied to the graded functor
$\mrm{Gr}:\textbf{F}\Cochainsp{\A}\rightarrow \Cochainsp{\A}^{\mathbb{Z}}$.

To treat the general case, consider the decalage filtration functor $Dec:\textbf{F}\Cochainsp{\A}\rightarrow\textbf{F}\Cochainsp{\A}$,
$(A,\mrm{F})\mapsto(A,Dec\mrm{F})$, where $(Dec\mrm{F})^kA^n = \mrm{ker}\{d:\mrm{F}^{k+n}A^n\rightarrow \mrm{F}^{k+n}A^{n+1} /\mrm{F}^{k+n+1}A^{n+1}\}$.
Since  $Dec \, (\mbf{s},\delta_{r+1})= (\mbf{s},\delta_r)\, Dec$, by applying the transfer lemma inductively, we can conclude that $(\mbf{s},\delta_r)$ is a simple functor for $(\textbf{F}\Cochainsp{\A},\mrm{E}_r)$, for each $r\geq 0$.
\end{example}

\subsection{Cartan-Eilenberg categories}\label{CEsection}

\subsubsection{} Cartan-Eilenberg categories are a new approach to homotopical algebra developed in \cite{GNPR1}. They use, we believe, a minimum amount of data in order to derive functors, so its conditions can be fulfilled by a wider class of categories, as we are going to show.

\begin{definition}\label{fibrant} Let $(\C, \Scal, \W)$ be a category with two classes $\Scal$ and $\W$ of distinguished morphisms, called respectively
{\it strong\/} and {\it weak equivalences\/}, and such that ${\Scal} \subset \overline{\W}$. An object $M$ of $\C$ is called {\it Cartan-Eilenberg fibrant\/}, \textit{CE-fibrant} for short, if for each weak equivalence $w: Y \longrightarrow X \in \W$ and every morphism $f \in \loc{C}{S}$, there is a unique morphism $g \in\loc{C}{S}$ making the following triangle commutative:
$$
\xymatrix{
Y \ar[r]^w \ar[d]_-f    &   X  \ar@{.>}[dl]^g  \\
M                       &                     }
$$
\end{definition}

\begin{remark} Here $\overline{\W}$ denotes the \emph{saturation} of $\W$. Classes $\Scal$ and $\W$ of strong and weak equivalences considered later in the study of sheaves are saturated, i.e. $\overline{\Scal}=\Scal$ and $\overline{\W}=\W$. In this case, Whitehead's theorem holds: a weak equivalence between CE-fibrant objects is a strong one.
\end{remark}

\subsubsection{}\label{defCEcategory} A right {\it CE-fibrant model\/} of an object $X$ of $\C$ is a morphism $w : X \longrightarrow M$ of $\loc{C}{S}$ that becomes an isomorphism in $\loc{C}{W}$, and such that $M$ is CE-fibrant. If $X$ admits a CE-fibrant model, it is unique up to unique isomorphism of $\loc{C}{S}$.

\begin{definition} A category with strong and weak equivalences $(\C, \Scal, \W)$ is called a {\it right Cartan-Eilenberg category\/}, or {\it CE-category\/} for short, if each object $X$ of $\C$ has a CE-fibrant model.
In this case, we will also say that $\C$ \emph{has enough CE-fibrant models}.
\end{definition}

\begin{example} If $\C$ is a Quillen model category and $\Scal , \W$ are the classes of its right homotopy equivalences and weak equivalences,
respectively, then $(\C_c, \Scal, \W)$ is a right Cartan-Eilenberg category. Here $\C_c$ is the full subcategory of Quillen cofibrant
objects. In this case, every Quillen fibrant object is CE-fibrant, but
the converse needs not be true: by its very definition, CE-fibrant objects are homotopically invariant, while Quillen fibrant objects are not.
\end{example}

\begin{remark} So, CE-categories naturally include Quillen model ones and the inclusion is \lq\lq strict" in the sense that, for instance, the class $\Scal$ must not be any class of \lq\lq homotopy equivalences". This is particularly important for us because, in the case of sheaves, the global equivalences \emph{cannot} indeed be the homotopy equivalences of any Quillen model structure, as shown in \cite{GNPR2}. Since these global equivalences are such a natural ingredient for sheaves, this seems to be significant. Global equivalences are needed, for instance, to talk about sheaves satisfying Thomason descent, which are  precisely CE-fibrant models, to close the circle.
\end{remark}

\subsubsection{} In CE-categories, the derivability criterion of functors reads as follows (see \cite[3.2.1]{GNPR1}).

\begin{proposition}\label{existenciaDerivado} Let $(\C, \Scal , \W)$ be a Cartan-Eilenberg category and $F: \C \longrightarrow \D$ a functor such that $F(s)$ is an isomorphism for every strong equivalence $s \in \Scal$. Then $F$ has a right derived functor $\mathbb{R}F : \loc{C}{W} \longrightarrow \D$ whose value on objects may be computed as $\mathbb{R}F(X) = F(M)$, where $M$ is a fibrant model of $X$.
\end{proposition}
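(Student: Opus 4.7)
The plan is to build $\mathbb{R}F$ directly from the prescribed value $\mathbb{R}F(X) = F(M_X)$ on a chosen CE-fibrant model $M_X$ of $X$, and then verify that this functor, together with the obvious unit $\eta_X = F(w_X)$, is a left Kan extension of $F$ along the localization $\pi_{\W}: \C \longrightarrow \loc{C}{W}$.

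The first step is that, since $F$ inverts strong equivalences, the universal property of $\pi_{\Scal}: \C \longrightarrow \loc{C}{S}$ yields a unique functor $\overline{F}: \loc{C}{S} \longrightarrow \D$ with $\overline{F} \circ \pi_{\Scal} = F$. The second and central step is the following \emph{local object lemma}: for $N$ CE-fibrant, the canonical map
$$\Hom_{\loc{C}{S}}(X, N) \longrightarrow \Hom_{\loc{C}{W}}(X, N)$$
is a bijection for every $X$. Indeed, the lifting-uniqueness condition of Definition \ref{fibrant} is precisely the statement that $\Hom_{\loc{C}{S}}(-, N)$ sends every morphism of $\W$ to a bijection; by the universal property of the further localization of $\loc{C}{S}$ at the image of $\W$ (which coincides with $\loc{C}{W}$ since $\Scal \subset \overline{\W}$), one recovers the displayed bijection.

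Choose, for every object $X$, a CE-fibrant model $w_X: X \longrightarrow M_X$, normalized so that $w_X = \id$ whenever $X$ is already CE-fibrant, and set $\mathbb{R}F(X) := \overline{F}(M_X)$. Given a morphism $\varphi: X \longrightarrow Y$ in $\loc{C}{W}$, the composite $w_Y \circ \varphi \circ w_X^{-1}: M_X \longrightarrow M_Y$ is well defined in $\loc{C}{W}$ (the models $w_X, w_Y$ being invertible there) and lifts, by the local object lemma with $N = M_Y$, to a unique morphism $M_X \longrightarrow M_Y$ in $\loc{C}{S}$; we let $\mathbb{R}F(\varphi)$ be $\overline{F}$ applied to this lift. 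Uniqueness in the lemma forces functoriality and the identity axiom.

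Finally, set $\eta_X := F(w_X): F(X) \longrightarrow F(M_X) = \mathbb{R}F(\pi_{\W}X)$. For the universal property, given any $G: \loc{C}{W} \longrightarrow \D$ and any natural transformation $\alpha: F \longrightarrow G \circ \pi_{\W}$, on CE-fibrant objects $M$ we have $\eta_M = \id$, so the candidate $\beta: \mathbb{R}F \longrightarrow G$ is forced to satisfy $\beta_{\pi_{\W}M} = \alpha_M$; since every $X$ is related to its model $M_X$ by an isomorphism in $\loc{C}{W}$ and $\mathbb{R}F$ factors through CE-fibrant objects, this determines $\beta$ uniquely and naturally. The main obstacle is the local object lemma, where the pointwise lifting axiom of CE-fibrancy has to be converted into the categorical statement that CE-fibrant objects form a reflective subcategory of $\loc{C}{W}$ sitting inside $\loc{C}{S}$; once that step is in place, the remaining pieces amount to the standard assembly of a left Kan extension through a fully faithful reflection.
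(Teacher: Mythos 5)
Your argument is correct, and it is essentially the proof of the result that the paper itself only cites (\cite[3.2.1]{GNPR1}): factor $F$ through $\loc{C}{S}$, show that CE-fibrant objects are local objects so that $\C_{\mathrm{fib}}[\Scal^{-1}]\longrightarrow\loc{C}{W}$ is fully faithful (and essentially surjective because every object has a CE-fibrant model), and then assemble the left Kan extension through this reflection. The one step you should not wave through is the \emph{local object lemma}. The universal property of the localization of $\loc{C}{S}$ at the image of $\W$ only tells you that $\Hom_{\loc{C}{S}}(-,N)$ \emph{factors} through $\loc{C}{W}^{\op}$; it does not by itself identify that factorization with $\Hom_{\loc{C}{W}}(-,N)$, which is what you need. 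The honest argument takes a morphism $X\to N$ of $\loc{C}{W}$, represents it by a zigzag whose backward arrows lie in $\W$, and collapses it from the right using, at each backward arrow, the unique lift supplied by Definition \ref{fibrant}; one then checks that this prescription respects the generating relations of the localization and is a two-sided inverse to the canonical map. This is precisely the characterization of CE-fibrant objects established in \cite{GNPR1}, so the statement is true, but as written your justification asserts the conclusion rather than proving it. Two smaller points: since $w_X$ is a morphism of $\loc{C}{S}$ and not of $\C$, the unit should be $\overline{F}(w_X)$ rather than $F(w_X)$; and to verify $(\beta\circ\pi_{\W})\cdot\eta=\alpha$ you must first descend $\alpha$ to a natural transformation of functors on $\loc{C}{S}$ (possible because both $F$ and $G\circ\pi_{\W}$ invert $\Scal$) before invoking naturality at $w_X$.
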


\subsubsection{} In the CE-categories considered later on, the CE-fibrant model of an object $X$ will be functorial in the sense of \cite[2.5]{GNPR1}: what we call a \emph{resolvent functor}. One of the advantages of having a resolvent functor is that, if $\C_{\mrm{fib}}$ denotes the full subcategory
of $\C$ of CE-fibrant objects, there is an equivalence of categories (\cite[Proposition 2.5.3(2)]{GNPR1})
$$
\xymatrix{ \C_{\mrm{fib}} [\mc{S}^{-1}] \ar@<0.5ex>[r]^-{\mrm{i}}_{\sim}  &  \C[\W^{-1}]
\ar@<1ex>[l]^-{R}
}$$

\section{Categories of sheaves}

We recall some general definitions and results about sheaves of sets on a Grothendieck site.. Our main objective is to point out formulas (3.1.3)  and (3.1.4) for stalks and skyscraper sheaves, respectively. Then we observe that these formulas still make sense for sheaves with values in any category with filtered colimits and arbitrary products, and that they do indeed form a pair of
adjoint functors. The associated triple gives us the cosimplicial Godement resolution.

We also show that the category of sheaves with values in a descent category inherits a natural descent structure, which will be used repeatedly in the rest of the paper.

\subsection{Sheaves of sets}

\subsubsection{} Let $\X$ be a category.  Let $\widehat{\X} =  \PrSh{\X}{\Set}$ denote the category of presheaves on $\X$ with values in the category of sets $\Set$. By the Yoneda embedding, every object $U\in \X$ can be thought of as the representable presheaf $\y U = \X(-, U) \in \widehat{\X}$.

\subsubsection{} If $\X$ is a \emph{Grothendieck site},  $\widetilde{\X}=\Sh{\X}{\Set}$ will denote the full subcategory of $\PrSh{\X}{\Set}$ whose objects are sheaves.

Sheaves may be characterized by the following property (see \cite{McLM}, page 122): a presheaf $\F \in \widehat{\X}$ is a sheaf if and only if for every object $U\in \X$ and every cover $S = \left\{ U_\alpha \longrightarrow U \right\}$ of $U$, the diagram
\begin{equation}\label{CondicionHaz0}
\xymatrix{
{\F (U)} \ar[r]   &   {\prod_{\alpha} \F (U_\alpha) }\ar@<0.5ex>[r] \ar@<-0.5ex>[r] &
{\prod_{\alpha\beta} \F (U_{\alpha\beta})}
}
\end{equation}
is an equalizer of sets. Here the second product ranges over all composable pairs $U_{\alpha\beta} \longrightarrow U_\alpha$, $U_\alpha \longrightarrow U $ with $U_\alpha \longrightarrow U \in S$ (hence also its composition $U_{\alpha\beta} \longrightarrow U$ belongs to $S$). It follows that a functor of presheaves that commutes with limits will send sheaves to sheaves.

\subsubsection{} Let $f: \X \longrightarrow \Y$ be a \emph{morphism of sites}; that is, a functor between the underlying
categories going in the opposite direction $f^{-1} : \Y \longrightarrow \X$ which is \emph{continuous}. This means that the \emph{direct image} functor
$f_* : \widehat{\X} \longrightarrow \widehat{\Y}$, $\F \mapsto \F \circ f^{-1}$, restricts to a functor between sheaves
$f_* : \widetilde{\X} \longrightarrow \widetilde{\Y}$.

\subsubsection{} Recall that a \textit{point} of a site $\X$ is by definition a pair of adjoint functors $x=(x^\ast,x_{\ast})$
$$
\xymatrix{  {\widetilde{\X} } \ar@<0.5ex>[r]^-{x^*}  &  {\Set}
\ar@<0.5ex>[l]^-{x_*}
}
\quad , \qquad \qquad \Set (x^*\F , D) = \widetilde{\X} (\F , x_*D)
$$
such that $x^*$ commutes with finite limits. The right adjoint $x_* : \Set \longrightarrow \widetilde{\X}$ gives for every set
$D $ the so called \emph{skyscraper sheaf} $x_*D $ of $D$ at the point $x$. The left adjoint $x^* : \widetilde{\X} \longrightarrow \Set$
gives for every sheaf $\F$ the \emph{fibre} or \emph{stalk} $x^*\F = \F_x$ of $\F$ at $x$.

The following \lq\lq computational" formulas for $x^*$ and $x_*$ are for us of utmost importance, since they allow us to extend them for our categories of coefficients $\D$. First, we have a canonical and functorial isomorphism
\begin{equation}\label{fibra}
x^*\F = \F_x = \dirlim_{(U,u)} \F (U) \ ,
\end{equation}
where $(U,u)$ runs over the opposite category of neighbourhoods of $x$ (\cite{SGA4}, expos\'{e} IV, 6.8). This colimit is a \emph{filtered} one.

For a set $D\in \Set$, the sheaf $x_*D$ also admits the following description: for $U \in \X$,
\begin{equation}\label{gratacels}
(x_* D ) (U) = \prod_{u \in x^*(\y U)} D_u \ ,
\end{equation}
where $D_u = D$ for all $u \in x^*(\y U)$.

\subsubsection{} A site ${\X}$ is said to have \emph{enough points} if there exists
a set $X$ formed by points of $\X$ such that a morphism
$f$ in $\widetilde{\X}$ is an isomorphism if and only if $x^*(f)$ is a bijection for all $x \in X$.

Given a family $X$ of enough points of $\X$, consider $X$ as a discrete category with only identity morphisms. Then, there is an
adjoint pair of functors
$$
\xymatrix{ {\widetilde{\X} } \ar@<0.5ex>[r]^-{p^*}  &   {\Set^X}
\ar@<0.5ex>[l]^-{p_*}
}
\quad , \qquad \qquad \Set^X (p^*\F , D) = \widetilde{\X} (\F , p_*D)
$$
defined, for $\F \in \widetilde{\X}$ and $D = (D_x)_{x\in X}\in \Set^X $, by
\begin{equation}\label{adjuncio0}
p^*\F =  (\F_x)_{x\in X} \qquad \text{and} \qquad p_*D = \prod_{x\in X} x_* (D_x)  \ .
\end{equation}

\subsection{Sheaves with general coef{f}{i}cients.}

\subsubsection{} Let  $\X $ and $\D$ be  categories. Let $\PrSh{\X}{\D}$ denote the category of presheaves on $\X$ with values in $\D$. If $\X$ is a Grothendieck site, let $\Sh{\X}{\D}$ denote the full subcategory of $\PrSh{\X}{\D}$ whose objects
are sheaves. If $\D$ has products, then $\F$ is a sheaf if and only if, for every object $U$ and every covering $S$ of $U$, diagram (\ref{CondicionHaz0}) is an equalizer in $\D$ (\cite{McL}, V.4.1).

We can also define the direct image functor for presheaves with values in an arbitrary category $\D$: if $f: \X \longrightarrow \Y$ is a morphism of sites, $f_* : \PrSh{\X}{\D} \longrightarrow \PrSh{\Y}{\D}$ is defined on objects
$\F \mapsto \F \circ f^{-1}$ and we have the following elementary result.

\begin{lemma}
Let $f : \X \longrightarrow \Y$ be a morphism of sites and $\F \in \Sh{\X}{\D}$. Then $f_*\F \in \Sh{\Y}{\D}$.
\end{lemma}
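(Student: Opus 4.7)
The plan is to reduce this to the Set-valued case stated in Section 3.1.5, which is already established. By the characterization in Section 3.2.1, a presheaf $\G : \Y^\op \longrightarrow \D$ is a sheaf if and only if, for every object $D \in \D$, the Set-valued presheaf $\G_D : V \mapsto \D(D, \G(V))$ is a sheaf on $\Y$. So the task reduces to showing that $(f_*\F)_D$ is a sheaf of sets on $\Y$ for every $D \in \D$.

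First I would compute $(f_*\F)_D$ explicitly: by definition of the direct image and of the hom-presheaf,
$$
(f_*\F)_D(V) \;=\; \D(D, (f_*\F)(V)) \;=\; \D(D, \F(f^{-1}(V))) \;=\; \F_D(f^{-1}(V)) \;=\; (f_*\F_D)(V),
$$
and this identification is manifestly natural in $V$. In other words, taking the $D$-component commutes with pushforward: $(f_*\F)_D = f_*(\F_D)$ as presheaves of sets on $\Y$.

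Next I would invoke the hypothesis that $\F \in \Sh{\X}{\D}$, which, by the same characterization, tells me that $\F_D \in \widetilde{\X}$ for every $D \in \D$. Since $f : \X \longrightarrow \Y$ is a morphism of sites, by the result recalled in Section 3.1.5 the pushforward along $f$ restricts to a functor $f_* : \widetilde{\X} \longrightarrow \widetilde{\Y}$, so $f_*(\F_D)$ is a sheaf of sets on $\Y$. Combining with the identity $(f_*\F)_D = f_*(\F_D)$, this is precisely what was needed, and $f_*\F \in \Sh{\Y}{\D}$ follows from the Set-valued characterization.

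There is no real obstacle: once the $D$-component is seen to commute with $f_*$, the whole lemma is just the Set-valued statement applied pointwise in $D$. The only point worth being careful about is that the identification $(f_*\F)_D = f_*(\F_D)$ is an equality of functors on $\Y^\op$, which is immediate from the definitions but deserves to be stated as the crucial naturality step.
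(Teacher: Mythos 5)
Your proof is correct and follows exactly the paper's own argument: both reduce to the $\Set$-valued case via the characterization $\F\in\Sh{\X}{\D}\iff \D(D,\F(-))\in\widetilde{\X}$ for all $D$, use the identity $(f_*\F)_D=f_*(\F_D)$, and invoke continuity of $f$ for sheaves of sets. No differences worth noting.
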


\subsubsection{} Next we construct a natural, objectwise, descent structure for sheaves on a site $\X$ with values in a descent category $(\D, \mathrm{E})$.

\begin{definition} Let $\D$ be a category with a distinguished class of morphisms $\mathrm{E}$. We will say that a morphism of
(pre)sheaves $f: \F \longrightarrow \G$ with values in $\D$ is a {\it global equivalence\/} if for any object $U \in \X$ the
morphism $f(U) : \F (U) \longrightarrow \G (U)$ is in $\mathrm{E}$. We will denote by $\Scal$ the class of global equivalences.
\end{definition}

\begin{proposition}\label{descensohaces}
Let $(\D, \mathrm{E})$ be a descent category which is assumed to be closed under products. If $\X$ is a
Grothendieck site, then $\Sh{\X}{\D}$ is a descent category with $\simple$ defined objectwise
$\simple(\F^\bullet )(U) = \simple (\F^\bullet(U)) $ and the class of weak equivalences being the global ones.
\end{proposition}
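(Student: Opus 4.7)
The plan is to deduce the result from Lemma~\ref{CosimplicialCategOfFunctors} applied to $I=\X^{\op}$, which already equips $\PrSh{\X}{\D}=\D^{\X^{\op}}$ with a descent structure defined objectwise from that of $\D$, and then to show that this structure restricts to the full subcategory $\Sh{\X}{\D}\hookrightarrow \PrSh{\X}{\D}$. Concretely, one must check three things: that the objectwise simple functor sends cosimplicial sheaves to sheaves; that the transformations $\mu$ and $\lambda$ restrict; and that the axioms (S1)--(S5) and saturation of the class $\Scal$ of global equivalences still hold.

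The only non-formal step is the first one, stability of the sheaf property under $\simple$. Given $\F^\bullet \in \Simpl\Sh{\X}{\D}$, I would apply the equalizer characterization (\ref{CondicionHaz0}). Since each $\F^n$ is a sheaf, for every covering sieve $S$ of every $U\in\X$ and every $n\geq 0$ the diagram
$$
\F^n(U)\longrightarrow \prod_{f:V\to U\in S}\F^n(V) \rightrightarrows \prod_{(f,g)}\F^n(W)
$$
is an equalizer in $\D$. Because limits in $\Simpl\D$ are computed cosimplicially, this is exactly the statement that the analogous diagram of cosimplicial objects is an equalizer in $\Simpl\D$. Applying the simple functor and invoking the standing hypothesis of Remark~\ref{LambdaMuFlechas} that $\simple$ preserves limits (hence products and equalizers), one obtains an equalizer
$$
\simple(\F^\bullet)(U)\longrightarrow \prod_{f:V\to U\in S}\simple(\F^\bullet)(V) \rightrightarrows \prod_{(f,g)}\simple(\F^\bullet)(W)
$$
in $\D$, which is precisely the sheaf condition for $\simple(\F^\bullet)$.

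The remaining points are then formal. The components of $\mu$ and $\lambda$ for $\PrSh{\X}{\D}$ at a presheaf $\F$ are evaluated at each object $U\in\X$ using the corresponding transformations of $\D$; in particular, when $\F$ is a sheaf, the source and target of these components are sheaves, so $\mu$ and $\lambda$ restrict to $\Sh{\X}{\D}$. The axioms (S1)--(S5) for $\D$ are tested objectwise and transfer directly. Saturation of $\Scal$ likewise follows from saturation of $\mathrm{E}\subset \D$ by testing two-out-of-three and retract closure objectwise.

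The main (and only minor) obstacle is the closure of sheaves under the simple functor, which is handled cleanly by the limit-preservation hypothesis on $\simple$; no axiom-by-axiom verification of (S1)--(S5) is necessary once Lemma~\ref{CosimplicialCategOfFunctors} is in place.
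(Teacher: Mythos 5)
Your proof is correct and follows essentially the same route as the paper: apply Lemma~\ref{CosimplicialCategOfFunctors} with $I=\X^{\op}$ to get the objectwise descent structure on $\PrSh{\X}{\D}$, use the standing limit-preservation hypothesis on $\simple$ (via the equalizer characterization~(\ref{CondicionHaz0})) to see that $\simple$ carries cosimplicial sheaves to sheaves, and then restrict the whole structure to the full subcategory $\Sh{\X}{\D}$. The only cosmetic difference is that the paper packages the final verification by invoking the Transfer Lemma~\ref{transferLema} with $\psi$ the inclusion $\mathbf{i}:\Sh{\X}{\D}\longrightarrow\PrSh{\X}{\D}$, whereas you check directly that $\mu$, $\lambda$ and the axioms restrict objectwise.
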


\begin{proof} $\PrSh{\X}{\D}$ inherits from $(\D, \mathrm{E})$ an objectwise descent structure in a natural way: define $\mbf{s}:\Simpl\PrSh{X}{\D}\longrightarrow\PrSh{X}{\D}$  just as $(\mbf{s} \F^\bullet)(U) = \mbf{s}(\F^\bullet(U))$ and the equivalences $\mathrm{E}^{\X} = \Scal$ are the global ones. Since $\simple :\Simpl\D\longrightarrow\D$ is assumed to preserve limits, it follows that if $\F^\bullet$ is a cosimplicial sheaf, then $\simple(\F^\bullet)$ is a sheaf. Hence, $\simple$ restricts to a functor defined between the categories of sheaves $\simple : \Simpl\Sh{\X}{\D}\longrightarrow\Sh{\X}{\D}$.

On the other hand, the subcategory $\Sh{\X}{\D}$ is closed under
products. As $\Sh{\X}{\D}$ is a full subcategory of
$\PrSh{\X}{\D}$, then the natural transformations $\lambda$ and
$\mu$ for $\PrSh{\X}{\D}$ are also natural transformations in
$\Sh{\X}{\D}$. Thus, it is clear that the hypotheses of the Transfer Lemma \ref{transferLema} are verified taking as $\psi$ the inclusion functor $ \Sh{\X}{\D}\longrightarrow \PrSh{\X}{\D}$.
\end{proof}

\subsection{The cosimplicial Godement resolution}

In this section we show how the classical cosimplicial Godement resolution makes sense for sheaves with values in categories with filtered colimits and arbitrary products.

\subsubsection{} If $\D$ is a category closed under products and filtered colimits and $x$ is a point of the site $\X$, then $x^*:\Sh{\X}{\D}\longrightarrow \D$ and $x_*:\D\longrightarrow \Sh{\X}{\D}$ may be defined by the formulas (\ref{fibra}) and (\ref{gratacels}). That is, for $\F \in \Sh{\X}{\D}$ and $D \in \D$, write
$$x^*\F = \F_x = \dirlim_{(U,u)} \F (U) \ \qquad \text{and} \qquad \ (x_* D ) (U) = \prod_{u \in x^*(\y U)} D_u \ ,
$$
where $(U,u)$ runs over the opposite category of neighbourhoods of $x$ and $D_u = D$ for all $u$.

And we need nothing else:

\begin{proposition} Let $\D$ be a category closed under products and filtered colimits, and let $x$ be a point of the site $\X$. Then, the functors $x_*$, $x^*$ define a pair of adjoint functors
$$
\xymatrix{ {\Sh{\X}{\D}}  \ar@<0.5ex>[r]^-{x^*}  &  {\D}
\ar@<0.5ex>[l]^-{x_*}
}
\quad , \qquad \qquad \D (x^*\F , D) = \Sh{\X}{\D} (\F , x_*D) \ .
$$
\end{proposition}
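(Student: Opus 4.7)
The plan is to reduce the adjunction to the universal properties of the colimit defining $x^*\F$ and of the products defining $x_*D$, mimicking the classical $\Set$-valued adjunction already recalled. The key observation is that, since $\Sh{\X}{\D}$ is a full subcategory of $\PrSh{\X}{\D}$, morphisms between sheaves are the same as natural transformations of the underlying presheaves, which lets me apply the universal property of the product pointwise.

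First, I would verify that $x_*D$ really is a sheaf, so that the right-hand side makes sense. For any test object $E\in\D$, the presheaf of sets $U\mapsto \D(E,(x_*D)(U))$ can be rewritten, using that $\D(E,-)$ preserves products, as
\[
U\;\longmapsto\;\prod_{u\in x^*(\e U)}\D(E,D),
\]
which is precisely the $\Set$-valued skyscraper sheaf of the set $\D(E,D)$ at the point $x$. By the already recalled $\Set$-valued case this is a sheaf of sets, so $x_*D\in\Sh{\X}{\D}$.

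Next I would construct the bijection. A morphism $\varphi:\F\longrightarrow x_*D$ of sheaves is the same thing as a natural transformation of presheaves, and by the universal property of the product it corresponds to a family of morphisms $\varphi_{U,u}:\F(U)\longrightarrow D$ indexed by the pairs $(U,u)\in\mbf{Nbh}(x)$. The crucial step is to unwind the naturality of $\varphi$: for a morphism $f:V\longrightarrow U$ in $\X$, using that $(x_*D)(f)$ is the product-reindexing map induced by $(\e f)_x:(\e V)_x\longrightarrow(\e U)_x$, naturality unpacks to the identity
\[
\varphi_{V,v}\circ\F(f)\;=\;\varphi_{U,(\e f)_x(v)}\qquad\text{for every }v\in x^*(\e V),
\]
which is precisely the compatibility condition for a cocone on the diagram $\mbf{Nbh}(x)^\op\longrightarrow\D$, $(U,u)\mapsto\F(U)$. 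By the universal property of the colimit $x^*\F=\dirlim_{(U,u)}\F(U)$ such cocones are in natural bijection with morphisms $x^*\F\longrightarrow D$ in $\D$. Composing the two correspondences yields the claimed adjunction, and naturality in $\F$ and $D$ is formal.

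No substantial conceptual obstacle is expected; the argument is a transcription of the $\Set$-valued adjunction, using only that $\D$ has filtered colimits and arbitrary products. The point requiring the most care is the explicit identification of $(x_*D)(f)$ as the product-reindexing map along $(\e f)_x$, since this is exactly what matches presheaf naturality of $\varphi$ with the cocone condition over $\mbf{Nbh}(x)^\op$; once that is pinned down the adjunction is immediate.
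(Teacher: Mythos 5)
Your proposal is correct and follows essentially the same route as the paper: the verification that $x_*D$ is a sheaf (by identifying $\D(E,(x_*D)(-))$ with the $\Set$-valued skyscraper sheaf of the set $\D(E,D)$) is identical, and your explicit hom-set bijection via the universal properties of the product and the filtered colimit is just a spelled-out version of the paper's one-line appeal to the unit $\eta^x:\id\longrightarrow x_*x^*$ carrying over from the $\Set$-valued case. The extra care you take in identifying $(x_*D)(f)$ as the reindexing map along $(\e f)_x$, so that presheaf naturality becomes exactly the cocone condition over $\mbf{Nbh}(x)^\op$, is precisely the detail the paper leaves implicit.
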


\subsubsection{} Consequently, for a given set $X$ of enough points of the site $\X$, formulas (\ref{adjuncio0}) also make sense for coefficients
in $\D$ and define a pair of adjoint functors
$$
\xymatrix{ {\Sh{\X}{\D} } \ar@<0.5ex>[r]^-{p^*}  & {\D^X}
\ar@<0.5ex>[l]^-{p_*}
}
\quad , \qquad \qquad \D^X (p^*\F , D) = \Sh{\X}{\D} (\F , p_*D) \ .
$$
In case where $\X$ is the site associated with a topological space $X$, we may take as a set of enough points the underling set of $X$, and
the resulting adjoint pair $(p^*,p_*)$ agrees with the one induced by the continuous map $p:X_{\mrm{dis}}\longrightarrow X$, where $X_{\mrm{dis}}$ is $X$ with the discrete topology.

\subsubsection{} Let $\mathbf{T}=(T, \eta , \nu)$ denote the triple associated with this adjoint pair of functors. Its underlying
functor $T=p_*p^* : \Sh{\X}{\D} \longrightarrow \Sh{\X}{\D}$ is given by
$$
T(\F)\, (U) = \prod_{x\in X} (x_*x^*\F)(U) = \prod_{x\in X} \prod_{u \in x^*(\y U)} (x^*\F)_u \ ,
$$
where $(x^*\F)_u = x^*\F$ for all $u\in x^*(\y U)$. The standard construction associated with the triple $\mathbf{T}= (T, \eta , \nu)$ gives a cosimplicial object
$G^\bullet (\F)\in \Dl\Sh{\X}{\D}$ with $G^p (\F) = T^{p+1}(\F) $. The resulting functor
$$
G^\bullet : \Sh{\X}{\D} \longrightarrow \Simpl\Sh{\X}{\D}
$$
is nothing else than the {\it canonical cosimplicial resolution of Godement\/} (\cite{Go}). For every sheaf $\F$, the natural
transformation $\eta : \id \longrightarrow T$ defines a coaugmentation
$\F \longrightarrow G^\bullet (\F)$, that we also denote by $\eta_{\F}$.

We will repeatedly use the following well-known property of $G^{\bullet}(\F)$, that holds for each cosimplicial
construction associated with an adjoint pair.

\begin{lemma}\label{DegeneracionExtraGodement} The natural coaugmentation $\eta:\mrm{id} \longrightarrow G^\bullet $
is such that $p^{\ast}(\eta_{\F})$ and $\eta_{p_{\ast}(D)}$ have an extra degeneracy
for each sheaf $\F\in\Sh{\X}{\D}$ and each object $D\in\D^X$. Hence, they are cosimplicial homotopy equivalences.
\end{lemma}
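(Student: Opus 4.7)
The plan is to recognize $G^\bullet(\F)$ as the standard cosimplicial resolution associated with the monad $T = p_*p^*$ on $\Sh{\X}{\D}$ coming from the adjunction $p^* \dashv p_*$. Explicitly, $G^p(\F) = T^{p+1}(\F)$, the coface $d^i$ inserts the unit $\eta : \mrm{id} \to T$ at the $i$th slot, and the codegeneracy $s^i$ applies the monad multiplication $\mu = p_*\,\varepsilon\,p^* : T^2 \to T$ at the $i$th slot, where $\varepsilon : p^*p_* \to \mrm{id}_{\D^X}$ is the counit of $(p^*, p_*)$. The extra degeneracies in both cases will be manufactured from $\varepsilon$ alone, and the cosimplicial identities they must satisfy will reduce to the two triangle identities of the adjunction together with naturality of $\varepsilon$.

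For the first assertion I would rewrite $p^*G^p(\F) = (p^*p_*)^{p+1}p^*\F$ and put the extra degeneracy at the leftmost occurrence of $p^*p_*$: define
$$s^{-1}_{-1} := \varepsilon_{p^*\F} : p^*G^0(\F) = p^*p_*p^*\F \longrightarrow p^*\F \ ,$$
and, for $p \geq 0$,
$$s^{-1}_p := \varepsilon_{(p^*p_*)^{p+1}p^*\F} : p^*G^{p+1}(\F) \longrightarrow p^*G^p(\F) \ .$$
The identity $s^{-1}_{-1} \circ p^*(\eta_\F) = \mrm{id}_{p^*\F}$ is precisely the triangle identity $\varepsilon p^* \circ p^*\eta = \mrm{id}_{p^*}$.

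For the second assertion I would analogously write $G^p(p_*D) = p_*(p^*p_*)^{p+1}(D)$ and now apply the counit at the innermost position, setting
$$s^{-1}_{-1} := p_*(\varepsilon_D) : G^0(p_*D) = p_*p^*p_*D \longrightarrow p_*D \ , \qquad s^{-1}_p := p_*\bigl((p^*p_*)^p \varepsilon_D\bigr) \ .$$
The identity $s^{-1}_{-1} \circ \eta_{p_*D} = \mrm{id}_{p_*D}$ is then the other triangle identity $p_*\varepsilon \circ \eta p_* = \mrm{id}_{p_*}$.

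The only remaining task is to verify the compatibility of each $s^{-1}$ with the existing coaugmentation, faces and degeneracies: namely $s^{-1}d^0 = \mrm{id}$, $s^{-1}d^i = d^{i-1}s^{-1}$ for $i \geq 1$, and $s^{-1}s^i = s^i s^{-1}$. I do not anticipate any genuine obstacle here, since once the formulas $d^i = T^i \eta_{T^{p+1-i}}$ and $s^i = T^i \mu_{T^{p-i}}$ are expanded, each identity collapses to a single naturality square for $\varepsilon$ or an instance of one of the two triangle identities. Conceptually, this is the familiar fact that the standard resolution attached to an adjunction becomes contractible as soon as it is restricted to objects already lying in the image of one of the adjoints.
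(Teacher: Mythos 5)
Your proposal is correct and is exactly the argument the paper has in mind: it states the lemma without proof as the standard contractibility property of the cosimplicial resolution attached to an adjoint pair, which is what you verify via the triangle identities for $(p^{\ast},p_{\ast})$. Two cosmetic points: in the second case the map $G^{p+1}(p_{\ast}D)\to G^{p}(p_{\ast}D)$ should be $p_{\ast}\bigl((p^{\ast}p_{\ast})^{p+1}\varepsilon_{D}\bigr)$ rather than $p_{\ast}\bigl((p^{\ast}p_{\ast})^{p}\varepsilon_{D}\bigr)$, and since that contraction sits at the innermost slot it satisfies the identities of an extra degeneracy of type $s^{p+1}$ rather than $s^{-1}$ --- which is fine, as the paper's definition of extra degeneracy admits either convention.
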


\section{Cartan-Eilenberg categories of sheaves}

In this section we study under which conditions the Godement cosimplicial resolution deserves to be called a `resolution'.
More precisely, we provide conditions equivalent to the fact that the Godement resolution of $\F$  produces a CE-fibrant model,
by means of other properties such as Thomason's descent, or a weak commutation of the simple functor with stalks.

\subsection{Cartan-Eilenberg fibrant sheaves}\label{CartanEilenbergFibrantSheaves}

\subsubsection{} Let $(\D,\mrm{E})$ be a descent category.

\begin{definition}\label{Eexactos}
We say that filtered colimits and arbitrary products in $\D$ are $\mrm{E}$-{\it exact\/} if:

\begin{enumerate}
\item for any filtered category $I$ and any natural transformation $\varphi: F \longrightarrow G$ between functors $F, G: I \longrightarrow \D$ such that $\varphi_i \in \mathrm{E}$ for every $i \in I$, we have $\dirlim_i \varphi_i \in \mathrm{E}$; and
\item for every arbitrary family of morphisms $\{\varphi_i : F_i \longrightarrow G_i\}_{i\in I}$ in $\D$ such that $\varphi_i \in \mathrm{E}$ for every $i\in I$, we have $\prod_{i \in I} \varphi_i \in \mathrm{E}$.
\end{enumerate}
\end{definition}

In what follows, the site $\X$ and the descent category $(\D,\mrm{E})$ are assumed to verify the following hypotheses:
\begin{equation}\label{hipotesis}
\mbox{\begin{tabular}{cl}
      (G0) & $\X$ is a Grothendieck site with a set $X$ of enough points.\\
      (G1) & $\D$ is closed under filtered colimits and arbitrary products.\\
      (G2) & Filtered colimits and arbitrary products in $\D$ are $\mathrm{E}$-exact.
      \end{tabular}
}
\end{equation}

\subsubsection{} In sheaf theory the notion of weak equivalence that best reflects the homotopical behaviour
of sheaves and the topology of the Grothendieck site is the one of local equivalence, defined stalkwise rather than objectwise.

\begin{definition} A morphism of sheaves $f: \F \longrightarrow \G$ is a {\it local equivalence\/} if for any
point $x \in X$ the morphism $x^{\ast}f : x^{\ast}\F \longrightarrow x^{\ast}\G$ belongs to $\mathrm{E}$. We will denote by $\W$ the class of
local equivalences of $\Sh{\X}{\D}$.
\end{definition}

The following properties of the classes of global and local equivalences are immediate consequences of assumptions
(\ref{hipotesis}).

\begin{lemma} The classes $\Scal$ and $\W$ of global and local equivalences are saturated. In addition:
\begin{enumerate}
 \item[\emph{(1)}] $p_{\ast}(\mrm{E})\subset \mc{S}$ and $\mc{W}=(p^{\ast})^{-1}\mrm{E}$.
 \item[\emph{(2)}] $T(\mc{W})\subset {\Scal } \subset \W$.
 \item[\emph{(3)}] Arbitrary products of sheaves are $\Scal$-exact.
\end{enumerate}
\end{lemma}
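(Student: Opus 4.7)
The plan is to derive all four assertions directly from the hypotheses \eqref{hipotesis} together with the definitions of $p^\ast$, $p_\ast$, $\Scal$ and $\W$. The main inputs are (a) the explicit formulas $x^\ast \F = \dirlim_{(U,u)} \F(U)$ and $(x_\ast D)(U) = \prod_{u} D_u$, and (b) the $\mrm{E}$-exactness of filtered colimits and arbitrary products in $\D$.

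For saturation, I would first observe that a class of morphisms of the form $L^{-1}(\mathrm{iso})$ for any functor $L$ is automatically saturated, since if $L = \gamma \circ L'$ with $\gamma$ the localization at $L^{-1}(\mathrm{iso})$, then $L'$ inverts exactly this class. Now apply this twice: since $\mrm{E}$ is saturated in $\D$, the objectwise class $\mrm{E}^{\X^\op}$ in $\mathbf{PrSh}(\X,\D)$ is saturated (being the preimage of the isomorphisms under the componentwise localization $\D^{\X^\op} \to \D[\mrm{E}^{-1}]^{\X^\op}$). Since $\Scal$ on sheaves is the restriction of this class along the full inclusion $\mathbf{i}$, it is again saturated. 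Likewise $\W = (p^\ast)^{-1}(\mrm{E}^X)$, and $\mrm{E}^X$ is saturated in $\D^X$ for the same reason, so $\W$ is saturated in $\Sh{\X}{\D}$. This same description $\W=(p^\ast)^{-1}(\mrm{E}^X)$ is precisely the second half of assertion~(1), and is a direct unwinding of the definition of a local equivalence together with the formula $p^\ast\F = (x^\ast \F)_{x\in X}$.

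For the first half of (1), take $g : D \to D'$ in $\D^X$ with each $g_x\in\mrm{E}$. Using the explicit formula for $p_\ast$, the morphism $(p_\ast g)(U)$ is the product indexed by $x\in X$ and $u\in x^\ast(\e U)$ of copies of $g_x$. Each factor lies in $\mrm{E}$, and hypothesis (G2) on arbitrary products then places $(p_\ast g)(U)$ in $\mrm{E}$ for every $U$, i.e.\ $p_\ast g \in \Scal$. For (2), the inclusion $\Scal \subset \W$ follows from $x^\ast f = \dirlim_{(U,u)} f(U)$ together with the filtered-colimit part of (G2), since each $f(U) \in \mrm{E}$ by hypothesis on $f\in\Scal$. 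For $T(\W)\subset\Scal$, note that $T = p_\ast p^\ast$, so if $f \in \W$ then $p^\ast f \in \mrm{E}^X$ by the description of $\W$, hence $T(f) = p_\ast(p^\ast f) \in p_\ast(\mrm{E}) \subset \Scal$ by the statement just proved. Finally, assertion (3) is a direct consequence of the fact that products of sheaves are computed pointwise, combined once more with the product clause of (G2): if each $f_i$ is in $\Scal$, then $(\prod_i f_i)(U) = \prod_i f_i(U)$ is a product of morphisms in $\mrm{E}$, hence in $\mrm{E}$.

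There is no genuine obstacle here; the only step requiring a moment's care is the saturation argument, where one must resist the temptation to prove it by checking the 2-out-of-3 property and closure under retracts (which are necessary but not sufficient), and instead exhibit $\Scal$ and $\W$ as preimages of isomorphisms under suitable localizations. Everything else is a mechanical combination of the Godement formulas with the $\mrm{E}$-exactness hypothesis.
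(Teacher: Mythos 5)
Your proof is correct and is exactly the routine verification the paper omits (the lemma is stated there as an ``immediate consequence'' of the hypotheses (\ref{hipotesis}) with no proof given): saturation via exhibiting $\Scal$ and $\W$ as preimages of isomorphisms under functors into localized categories, and everything else from the explicit Godement formulas plus $\mrm{E}$-exactness of filtered colimits and products. The only blemish is the notational slip $L=\gamma\circ L'$, which should read $L=L'\circ\gamma$ for the factorization through the localization.
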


Since every global equivalence is a local one, $\Sh{\X}{\D}$ is then equipped with two classes of strong (global)
and weak (local) equivalences as in Section \ref{CEsection}. Therefore it makes sense to talk about CE-fibrant sheaves:

\begin{definition}
A sheaf $\F$ is {\it Cartan-Eilenberg fibrant} if for any solid diagram
$$
\xymatrix{
\G \ar[r]^w \ar[d]_-f    &   \G'  \ar@{.>}[dl]^g    \\
\F                      & }
$$
with $w$ a local equivalence and $f$ a morphism of $\loc{\Sh{\X}{\D}}{S}$, there exists a
unique morphism $g$ of $\loc{\Sh{\X}{\D}}{S}$ making the triangle commutative.
\end{definition}

\begin{example}
It is easy to see that $T(\F)$ is a CE-fibrant sheaf for any $\F$. This tells us that there is a significant number of
CE-fibrant sheaves. However, the class
$\{T(\F)\}$ fails in general to contain enough CE-fibrant models: for the case of positive complexes of abelian sheaves on a
topological space $X$, if any sheaf is locally equivalent to $T(\F)$ for some $\F$ then
$X$ would have cohomological dimension equal to zero.
\end{example}

\subsection{The hypercohomology sheaf} Next we make use of the Godement cosimplicial resolution to produce a wider class of
CE-fibrant sheaves: the one consisting of the hypercohomology sheaves.

\subsubsection{} Let $\X$ be a site and $(\D,\mrm{E})$ a descent category satisfying assumptions (\ref{hipotesis}). Recall that, by Proposition \ref{descensohaces}, $(\Sh{\X}{\D},\mc{S})$ is a descent category with simple functor $\mbf{s}$ defined objectwise. It follows from Theorem \emph{\cite[5.1]{Rod1}} that
$$
\xymatrix{ {\Sh{\X}{\D}[\mc{S}^{-1}] }  \ar@<0.5ex>[r]^-{c}  &  {\Sh{\X}{\Dl\D}[\mc{S}^{-1}] }
\ar@<0.5ex>[l]^-{\simple}}
$$
is an adjoint pair. In other words, $\mbf{s}(\F^{\bullet})$ is the homotopy limit of the
cosimplicial diagram of sheaves $\F^{\bullet}$. Accordingly to \cite{Th}, we make the following definition.

\begin{definition}
The {\it hypercohomology sheaf } of $\F\in \Sh{\X}{\D}$ is
the image under the simple functor of the Godement cosimplicial resolution of $\F$,
$$
\mbb{H}_{\X} (\F) = \mbf{s}G^\bullet (\F)  \ .
$$
\end{definition}

The natural coaugmentation $\eta_{\F}: \F\longrightarrow G^\bullet \F$ may be seen as a cosimplicial morphism $\eta_{\F}:c(\F) \longrightarrow G^\bullet (\F)$. The adjoint
morphism of $\eta_{\F}$ through $(c,\mbf{s})$
is then
$$
\rho_{\F} : \F \longrightarrow \mbb{H}_{\X} (\F)
$$
More explicitly, $ \rho_{\F}$ is the natural morphism of sheaves obtained as the
composition of $\mbf{s}(\eta_{\F}):\mbf{s}c(\F)\longrightarrow \mbb{H}_{\X} (\F)$ with
$\lambda_{\F}:\F\longrightarrow \mbf{s}c(\F)$, given by the descent
structure on $(\Sh{\X}{\D},\mc{S})$.

Thomason extensively studied those presheaves of fibrant spectra for which the morphism $\rho_{\F}$ is a global equivalence. Such an $\F$ was said to satisfy descent with respect to the site.

\begin{definition} A sheaf $\F\in\Sh{\X}{\D}$ is said to satisfy \textit{Thomason's descent} if
the natural morphism $ \rho_{\F} : \F \longrightarrow \mbb{H}_{\X} (\F) $ is in $\mc{S}$.
\end{definition}

\subsubsection{} To study the properties of the hypercohomology sheaf $\mbb{H}_{\X} (\F)$, we need to understand the behaviour of
the descent structure $(\mbf{s},\mu,\lambda)$ on $(\D,\mrm{E})$ with respect to the Godement pair $(p^{\ast},p_{\ast},\eta,\varepsilon)$.
On the one hand, the comparison of $\mbf{s}$ with $p_{\ast}$ presents no difficulty:
since $p_{\ast}$ only involves products and $\mbf{s}$ commutes with limits, $p_{\ast}\mbf{s}=\mbf{s}p_{\ast}$.

On the other hand, although $p^{\ast}$ does not commute in general with simple functors, we still do have a natural comparison morphism
\begin{equation}\label{psimple}
\theta_{\F^{\bullet}}: p^{\ast}\mbf{s}(\F^{\bullet})\longrightarrow \mbf{s}p^{\ast}(\F^{\bullet})
\end{equation}
defined as the adjoint through $(p^{\ast},p_{\ast})$ of
$\mbf{s}(\eta_{\F^{\bullet}}):\mbf{s}(\F^{\bullet})\longrightarrow \mbf{s}T(\F^{\bullet}) =
\mbf{s} p_{\ast} p^{\ast}(\F^{\bullet}) =
p_{\ast}\mbf{s}p^{\ast}(\F^{\bullet})$ for each cosimplicial sheaf $\F^{\bullet}$. Applying $p_{\ast}$ to (\ref{psimple}), we obtain the natural transformation
$$
\theta'_{\F^{\bullet}}:T\mbf{s}(\F^{\bullet})\longrightarrow \mbf{s}T(\F^{\bullet})
$$
adjoint to the composition
$p^{\ast}p_{\ast}p^{\ast}\mbf{s}(\F^{\bullet})\stackrel{\varepsilon_{p^{\ast}\mbf{s}(\F^{\bullet})}}{\longrightarrow} p^{\ast}\mbf{s}(\F^{\bullet})
\stackrel{\theta_{\F^{\bullet}}}{\longrightarrow } \mbf{s}p^{\ast}(\F^{\bullet})$.

The next technical lemma summarizes the compatibility relations between the descent structure on $(\D,\mrm{E})$
and the Godement pair which will be needed later on.

\begin{lemma} The diagrams
\begin{equation}\label{compatible}
\xymatrix{
 \mbf{s}(\F^{\bullet}) \ar[rd]_{\eta_{\mbf{s}(\F^{\bullet})}} \ar[rr]^{\mbf{s}(\eta_{\F^{\bullet}})} & & \mbf{s}T(\F^{\bullet})   &
T^2 \mbf{s}(\F^{\bullet}) \ar[d]^{\nu_{\mbf{s}(\F^{\bullet})}} \ar[r]^{T(\theta'_{\F^{\bullet}})} & T\mbf{s}T(\F^{\bullet})
\ar[r]^{\theta'_{T(\F^{\bullet})}} & \mbf{s}T^2 (\F^{\bullet})
 \ar[d]^{\mbf{s}(\nu_{\F^{\bullet}})}\\
     &   T\mbf{s}(\F^{\bullet}) \ar[ru]_{\theta'_{\F^{\bullet}}}  &   & T\mbf{s}(\F^{\bullet}) \ar[rr]^{\theta'_{\F^{\bullet}}} &
&  \mbf{s}T({\F}^{\bullet})
}
\end{equation}

are commutative in $\Sh{\X}{\D}$. In addition, the diagrams below are commutative, respectively, in $\mc{D}^{X}[\mrm{E}^{-1}]$ and $\Sh{\X}{\D}[\mc{S}^{-1}]$
$$
\xymatrix@C=30pt{
p^{\ast}(\F) \ar[r]^-{\lambda_{p^{\ast}(F)}} \ar[rd]_{p^{\ast}(\lambda_{\F})} & \mbf{s}c p^{\ast}(\F) = \mbf{s}p^{\ast} c(\F)  &
T(\F) \ar[r]^-{\lambda_{T(F)}} \ar[rd]_{T(\lambda_{\F})} & \mbf{s}c T (\F) = \mbf{s}Tc(\F)  \\
& p^{\ast}\mbf{s}c(\F) \ar[u]_{\theta_{c(\F)}} &  & T\mbf{s}c(\F) \ar[u]_{\theta'_{c(\F)}}
}
$$
\end{lemma}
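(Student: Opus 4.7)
My approach is to derive all four compatibilities from a single structural fact: by construction, $\theta_{\F^\bullet}$ is the $(p^\ast,p_\ast)$-adjoint transpose of $\mbf{s}(\eta_{\F^\bullet})$; equivalently, $\theta'_{\F^\bullet}=p_\ast(\theta_{\F^\bullet})$ and the identity $\theta'_{\F^\bullet}\circ \eta_{\mbf{s}(\F^\bullet)}=\mbf{s}(\eta_{\F^\bullet})$ holds in $\Sh{\X}{\D}$. The first triangle in (\ref{compatible}) is exactly this defining identity, so it needs no further argument.

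For the hexagonal diagram in (\ref{compatible}), my plan is to recognise it as the standard compatibility, under the mate correspondence for $p^\ast\dashv p_\ast$, between a natural transformation $\theta$ and the multiplication $\nu$ of the associated monad $T=p_\ast p^\ast$. Concretely, both composites are maps $T^{2}\mbf{s}(\F^\bullet)\to p_\ast\mbf{s}p^\ast(\F^\bullet)$ in $\Sh{\X}{\D}$; since $(p^\ast,p_\ast)$ is an adjunction on the nose, it suffices to compare their $(p^\ast,p_\ast)$-transposes in $\D^{X}$. Substituting $\theta'=p_\ast\theta$ and $\nu=p_\ast\varepsilon_{p^\ast}$, the left-hand transpose reduces, via the naturality square $\varepsilon_{\mbf{s}p^\ast\F^\bullet}\circ p^\ast p_\ast(\theta_{\F^\bullet})=\theta_{\F^\bullet}\circ\varepsilon_{p^\ast\mbf{s}\F^\bullet}$, to $\theta_{\F^\bullet}\circ\varepsilon_{p^\ast\mbf{s}\F^\bullet}\circ\varepsilon_{p^\ast p_\ast p^\ast\mbf{s}\F^\bullet}$; for the right-hand transpose, repeated application of naturality of $\varepsilon$, naturality of $\theta$ applied to the cosimplicial morphism $\eta_{\F^\bullet}:\F^\bullet\to T\F^\bullet$, and the triangle identity $\varepsilon_{p^\ast Y}\circ p^\ast(\eta_Y)=\mrm{id}$ collapse it to the same expression.

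For the third and fourth triangles I would use three auxiliary facts: (i) naturality of $\lambda:\mrm{id}\Rightarrow\mbf{s}c$; (ii) the identifications $p^\ast c=cp^\ast$ and $p_\ast c=cp_\ast$, which hold because $c$ is the constant cosimplicial diagram functor and $p^\ast,p_\ast$ act degreewise; and (iii) the equality $\lambda_{T(\F)}=p_\ast(\lambda_{p^\ast\F})$, which follows from the assumption (Remark \ref{LambdaMuFlechas}) that $\mbf{s}$ preserves all limits, whence $\lambda$ commutes with arbitrary products, combined with the explicit formula $(x_\ast D)(U)=\prod_{u\in x^\ast(\e U)}D$. For the third diagram I would take the $(p^\ast,p_\ast)$-transpose: the transpose of $\theta_{c(\F)}\circ p^\ast(\lambda_\F)$ equals $\mbf{s}c(\eta_\F)\circ\lambda_\F$ by the adjoint characterization of $\theta$ together with naturality of $\eta$, while the transpose of $\lambda_{p^\ast\F}$ is $p_\ast(\lambda_{p^\ast\F})\circ\eta_\F=\lambda_{T\F}\circ\eta_\F$ by (iii), and naturality of $\lambda$ identifies the two. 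Note that this actually yields commutativity on the nose in $\D^{X}$, not merely in $\D^{X}[\mrm{E}^{-1}]$. The fourth triangle then follows by applying $p_\ast$ to the third and invoking $\theta'=p_\ast\theta$.

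The main obstacle will be the hexagonal diagram: although no genuinely new idea beyond the adjoint characterization of $\theta$ and naturality of $\varepsilon$ is needed, the multiple nested occurrences of $p^\ast p_\ast$ in the sources and targets demand careful bookkeeping. The remaining three diagrams are one-step consequences of adjoint transposition combined with naturality.
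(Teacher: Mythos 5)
Your proposal is correct. For the two diagrams of (\ref{compatible}) you do essentially what the paper does: it dispatches them with the single remark that they ``may be checked by an easy adjunction argument'', and your unwinding --- $\theta'_{\F^\bullet}=p_\ast(\theta_{\F^\bullet})$, the transpose identity $\theta'_{\F^\bullet}\circ\eta_{\mbf{s}(\F^\bullet)}=\mbf{s}(\eta_{\F^\bullet})$, naturality of $\varepsilon$, and the unit law $\nu\circ\eta_{T}=\mrm{id}$ (equivalently the triangle identity) --- is precisely that argument made explicit. For the two $\lambda$-triangles, however, you take a genuinely different route. The paper stays at the localized level: it notes that $(p^{\ast},p_{\ast})$ descends to an adjunction between $\Sh{\X}{\D}[\Scal^{-1}]$ and $\mc{D}^{X}[\mrm{E}^{-1}]$, invokes Theorem \ref{equivCat} to view $\mbf{s}$ as right adjoint to $c$ with $\lambda$ the canonical adjunction morphism $\mrm{id}\to\mbf{s}c$ after localization, identifies $\theta_{\F^\bullet}$ with the $(c,\mbf{s})$-transpose of $p^{\ast}(\epsilon_{\F^\bullet})$, and then reads off the triangles from the calculus of mates --- which is exactly why the statement is asserted only in $\mc{D}^{X}[\mrm{E}^{-1}]$ and $\Sh{\X}{\D}[\Scal^{-1}]$. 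You instead work strictly, reducing everything to the identity $\lambda_{T(\F)}=p_{\ast}(\lambda_{p^{\ast}\F})$. This is legitimate under the paper's standing conventions (Remark \ref{LambdaMuFlechas}: $\lambda$ is an honest natural transformation and $\mbf{s}$ preserves limits, so $\mbf{s}c$ carries products to products and naturality of $\lambda$ against the projections gives the claimed identity via the explicit formula for $p_{\ast}$), and it yields the marginally stronger conclusion that these triangles commute on the nose rather than merely after localization. The trade-off is that your argument depends on the product formula for skyscrapers and on the strictness assumptions on $\lambda$ and $\mbf{s}$, whereas the paper's mate argument is insensitive to these and would survive the general zigzag version of $\lambda$; both deliver everything that is used later.
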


\begin{proof} The commutativity of diagrams (\ref{compatible}) may be checked by an easy adjunction
argument. On the other hand,
note that since $p_{\ast}(\mrm{E})\subset \mc{S}$ and $p^{\ast}(\mc{S})\subset \mrm{E}$,
$$
\xymatrix{
{ \Sh{\X}{\D}[\mc{S}^{-1}]  }  \ar@<0.5ex>[r]^-{p^*}  &  {\mc{D}^{X}[\mrm{E}^{-1}] }
\ar@<0.5ex>[l]^-{p_*}
}
$$
is again an adjoint pair after localizing. The image of $\theta_{\F^{\bullet}}: p^{\ast}\mbf{s}(\F^{\bullet})\longrightarrow \mbf{s}p^{\ast}(\F^{\bullet})$
in $\mc{D}^{X}[\mrm{E}^{-1}]$ is then also the canonical morphism induced by the adjunction $(p^{\ast},p_{\ast})$
at the localized level. But $\mbf{s}:\Dl\mc{D}^{X}[\mrm{E}^{-1}]\longrightarrow \mc{D}^{X}[\mrm{E}^{-1}]$ is right adjoint
to the constant functor, and it follows formally that $\theta_{\F^{\bullet}}: p^{\ast}\mbf{s}(\F^{\bullet})\longrightarrow \mbf{s}p^{\ast}(\F^{\bullet})$
coincides with the adjoint morphism through $(c,\mbf{s})$ of the natural map
$p^{\ast}(\epsilon_{\F^{\bullet}}) : p^{\ast}c\mbf{s}(\F^{\bullet})=c\,p^{\ast}\mbf{s}(\F^{\bullet})\longrightarrow p^{\ast}(\F^{\bullet})$,
where $\epsilon_{\F^{\bullet}}$ is the unit of $(c,\mbf{s})$.
This in turn implies that the two remaining diagrams are commutative as claimed.
\end{proof}

\subsubsection{} The properties of the natural comparison transformation (\ref{psimple}) imply
the following lemma, which will be of use for the next results.

\begin{lemma}\label{retracto}
Assume that $\X$ and $(\D, \mathrm{E})$ satisfy hypotheses
\emph{(\ref{hipotesis})}. If so, for any sheaf  $\F\in \Sh{\X}{\D} $ the following conditions hold:
\begin{enumerate}
 \item[\emph{(1)}] $T(\rho_{\F}) : T(\F) \longrightarrow T\mbb{H}_{\X} (\F)$ has a natural
section in $\Sh{\X}{\D}[\mc{S}^{-1}]$.
\item[\emph{(2)}] $\rho_{\mbb{H}_{\X} (\F)} : \mbb{H}_{\X} (\F) \longrightarrow \mbb{H}_{\X} (\F)^2 =\mbb{H}_{\X}\mbb{H}_{\X} (\F)$ has a natural
section in $\Sh{\X}{\D}[\mc{S}^{-1}]$.
\end{enumerate}
\end{lemma}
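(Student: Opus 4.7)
The plan rests on a single observation: by Lemma~\ref{DegeneracionExtraGodement}, the coaugmentation $\eta_\G : c(\G)\longrightarrow G^\bullet(\G)$ admits an extra degeneracy whenever $\G$ has the form $p_*(D)$ for some $D\in\D^X$. Combined with Lemma~\ref{extraDeg} applied in the objectwise descent category $(\Sh{\X}{\D},\mc{S})$ and the fact that $\lambda_\G\in\mc{S}$ (from (S3) applied objectwise), this yields
$$(\ast)\qquad \rho_\G \;=\; \mbf{s}(\eta_\G)\circ \lambda_\G \;\in\; \mc{S} \quad\text{whenever } \G = p_*(D),$$
so $\rho_\G$ is invertible in $\Sh{\X}{\D}[\mc{S}^{-1}]$.

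For (2), I would observe that the sheaf $\mbb{H}_\X(\F)$ itself is of this form. Indeed, $p_*$ preserves all limits (right adjoint), $\mbf{s}$ preserves limits by our standing hypothesis (Remark~\ref{LambdaMuFlechas}), and $G^p(\F)=p_*((p^*p_*)^p p^*\F)$; pulling $p_*$ outside gives
$$\mbb{H}_\X(\F) \;=\; p_*\bigl(\mbf{s}((p^*p_*)^\bullet p^*\F)\bigr).$$
Applying $(\ast)$ to $\G=\mbb{H}_\X(\F)$ makes $\rho_{\mbb{H}_\X(\F)}$ invertible in $\Sh{\X}{\D}[\mc{S}^{-1}]$, and its inverse is the required natural section.

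For (1), note that $T\F=p_*(p^*\F)$ is a free $T$-algebra. Applying $T$ degreewise to $G^\bullet(\F)$ yields a cosimplicial object $TG^\bullet(\F)$ with coaugmentation $T\eta_\F:c(T\F)\longrightarrow TG^\bullet(\F)$; the monad multiplication $\nu$ supplies an extra degeneracy (the base identity $\nu_\F\circ T\eta_\F=\id_{T\F}$ is a unit axiom). Equivalently, one may write $\mbf{s}(T\eta_\F)=p_*(\mbf{s}(p^*\eta_\F))$, which lies in $p_*(\mrm{E})\subset\mc{S}$ by Lemmas~\ref{DegeneracionExtraGodement} and \ref{extraDeg} applied in $\D^X$. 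Either way, combining with $\lambda_{T\F}\in\mc{S}$, the morphism
$$\phi \;:=\; \mbf{s}(T\eta_\F)\circ \lambda_{T\F}\;:\; T\F \longrightarrow \mbf{s}TG^\bullet(\F)$$
lies in $\mc{S}$, hence is invertible in $\Sh{\X}{\D}[\mc{S}^{-1}]$. To relate $\phi$ to $T(\rho_\F)$, I would use the naturality of $\theta'$ in the cosimplicial variable applied to $\eta_\F:c(\F)\longrightarrow G^\bullet(\F)$, combined with the third compatibility diagram of the preceding lemma ($\lambda_{T\F}=\theta'_{c(\F)}\circ T\lambda_\F$ in the localised category). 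A short diagram chase then gives, in $\Sh{\X}{\D}[\mc{S}^{-1}]$,
$$\theta'_{G^\bullet(\F)}\circ T(\rho_\F) \;=\; \mbf{s}(T\eta_\F)\circ \lambda_{T\F} \;=\; \phi.$$
Setting $\sigma := \phi^{-1}\circ \theta'_{G^\bullet(\F)} : T\mbb{H}_\X(\F)\longrightarrow T\F$ in the localised category yields $\sigma\circ T(\rho_\F) = \id_{T\F}$, which is the asserted natural section of $T(\rho_\F)$; naturality in $\F$ is inherited from that of $\phi$, $\theta'$ and $T$.

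The main obstacle is the diagram chase identifying $\theta'_{G^\bullet(\F)}\circ T(\rho_\F)$ with $\phi$ in $\Sh{\X}{\D}[\mc{S}^{-1}]$: it requires putting together all three compatibility relations of the preceding lemma and the cosimplicial naturality of $\theta'$. Establishing the extra degeneracy on $T\eta_\F$ and the resulting $\mbf{s}(T\eta_\F)\in\mc{S}$ is a routine consequence either of the monad axioms directly or, more economically, of pushing Lemma~\ref{DegeneracionExtraGodement} down from $\D^X$ along $p_*$.
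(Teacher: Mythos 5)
Your argument for part (1) is correct and is essentially the proof in the paper: you produce the global equivalence $\phi=\mbf{s}(T\eta_{\F})\circ\lambda_{T(\F)}$ (the paper inverts it explicitly via the contracting map $\nu'=\{\nu_{\F}\,\nu_{T(\F)}\cdots\nu_{T^n(\F)}\}$ with $\nu'\,T(\eta_{\F})=\id$), and the identity $\theta'_{G^{\bullet}(\F)}\circ T(\rho_{\F})=\phi$ follows, as you say, from the naturality of $\theta'$ applied to $\eta_{\F}:c(\F)\longrightarrow G^{\bullet}(\F)$ together with $\theta'_{c(\F)}\circ T(\lambda_{\F})=\lambda_{T(\F)}$.

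Part (2), however, has a genuine gap: the identification $\mbb{H}_{\X}(\F)=p_{*}\bigl(\mbf{s}((p^{*}p_{*})^{\bullet}p^{*}\F)\bigr)$ is false. To pull $p_{*}$ outside the simple functor you need the whole \emph{cosimplicial} object $G^{\bullet}(\F)$, not just its levels, to be $p_{*}$ of a cosimplicial object of $\D^{X}$. Each level $G^{p}(\F)=T^{p+1}(\F)=p_{*}((p^{*}p_{*})^{p}p^{*}\F)$ is indeed in the image of $p_{*}$, and so are the cofaces $T^{i}\eta T^{p-i}$ with $i\geq 1$; but the coface $d^{0}=\eta_{T^{p}(\F)}$ is the unit of the monad evaluated at $T^{p}(\F)$, and a unit $\eta_{p_{*}(E)}$ is not of the form $p_{*}(\alpha)$ for any morphism $\alpha$ of $\D^{X}$ in general (the triangle identity only makes it a section of $p_{*}(\varepsilon_{E})$). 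Concretely, for complexes of abelian sheaves on a space the differential of $\mbb{H}_{\X}(\F)$ involves ``take germs at all points'', which is not defined componentwise over the family $X$, so $\mbb{H}_{\X}(\F)$ is degreewise a product of skyscrapers without being a sheaf of the form $p_{*}(D)$. A decisive sanity check: your $(\ast)$ applied to $\mbb{H}_{\X}(\F)$ would give $\rho_{\mbb{H}_{\X}(\F)}\in\mc{S}$ unconditionally, i.e.\ condition (4) of Theorem \ref{CartanEilenbergSheaves} would hold for every site and every descent category satisfying (\ref{hipotesis}); but that condition is equivalent to compatibility and fails, e.g., for unbounded complexes of $R$-modules over a site of infinite cohomological dimension (Section 5.2). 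The lemma only claims a one-sided inverse in $\Sh{\X}{\D}[\mc{S}^{-1}]$, and the correct argument must pass through the bicosimplicial object $G^{\circ}G^{\bullet}(\F)$: iterate $\theta'$ to a cosimplicial morphism $\theta'^{\circ}:G^{\circ}\mbf{s}_{\bullet}\longrightarrow\mbf{s}_{\bullet}G^{\circ}$, reduce via Lemma \ref{Diagonal} to showing $\mbf{s}_{\bullet}\mbf{s}_{\circ}(\eta^{\circ}_{G^{\bullet}(\F)})\in\mc{S}$, and there the extra-degeneracy argument does apply level by level, because for each fixed $n$ the object $G^{n}(\F)=T^{n+1}(\F)$ \emph{is} of the form $p_{*}(E)$, so $\eta^{\circ}_{G^{n}(\F)}$ has an extra degeneracy; one then concludes with axiom (S4).
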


\begin{remark}
In the proof of the above lemma and other results of this section, we are going to work with bicosimplicial objects.
A bicosimplicial object $\{Z^{n,m}\}_{n,m}$ will be denoted by $Z^{\circ, \bullet}$, or by $Z^{\bullet, \circ}$, to distinguish both cosimplicial indexes.
To such a bicosimplicial object we may apply the simple functor with respect to $\circ$ or with respect to $\bullet$. The resulting
cosimplicial objects will be denoted, respectively, by $\mbf{s}_{\circ} Z^{\circ, \bullet}$ and $\mbf{s}_{\bullet} Z^{\circ, \bullet}$.
\end{remark}

\begin{proof} Given a sheaf $\F\in\Sh{\X}{\D}$, let us exhibit a natural morphism $\sigma_{\F}: T\mbb{H}_{\X} (\F)\longrightarrow T(\F)$ in $\loc{\Sh{\X}{\D}}{S}$ such that $\sigma_{\F} T(\rho_{\F})= \mrm{id}$.

Recall that the coaugmentation $\eta_{\F}:c(\F)\longrightarrow G^{\bullet}(\F)$ associated with the triple
 $\mathbf{T}= (T, \eta , \nu)$ is such that $T(\eta_{\F}):cT(\F)\longrightarrow T G^{\bullet}(\F)$ has an extra degeneracy given by the maps
$\{\nu_{T^n(\F)}:T^{n+2}(\F)\longrightarrow T^{n+1}(\F)\}_{n\geq 0}$. In particular,
$$
\nu'=\{\nu_{\F}\,\nu_{T(\F)}\,\cdots\,\nu_{T^n(\F)}\}_{n\geq 0}:T G^{\bullet}(\F)\longrightarrow cT(\F)
$$
is a natural cosimplicial morphism such that $\nu'\, T(\eta_{\F})=\id$. Then $\mbf{s}(\nu')\, \mbf{s}T(\eta_{\F})=\id$.
Define $\sigma_{\F}$ as the composition
$$
\xymatrix@C=40pt{ T\mbf{s}G^{\bullet}(\F)\ar[r]^-{\theta'_{G^{\bullet}(\F)}} & \mbf{s}TG^{\bullet}(\F) \ar[r]^-{\mbf{s}(\nu')} & \mbf{s}cT(\F) \ar[r]^-{\lambda_{T(\F)}^{-1}}&
T(\F)}  \ .
$$
Since $\theta'$ is a natural transformation, then
$ \theta'_{G^{\bullet}(\F)}\, T\mbf{s} (\eta_{\F}) = \mbf{s} T (\eta_{\F}) \, \theta'_{c(\F)}$, so
$\mbf{s}(\nu')\, \theta'_{G^{\bullet}(\F)}\, T\mbf{s} (\eta_{\F}) =  \mbf{s}(\nu')\, \mbf{s} T (\eta_{\F}) \, \theta'_{c(\F)} =
 \theta'_{c(\F)}$, and
$$
\sigma_{\F} \,  T(\rho_{\F}) =  (\,\lambda_{T(\F)}^{-1}\, \mbf{s}(\nu')\, \theta'_{G^{\bullet}(\F)} \,)\,
(\, T\mbf{s} (\eta_{\F}) \, T(\lambda_{\F}) \, ) = \lambda_{T(\F)}^{-1}\, \theta'_{c(\F)}  \, T(\lambda_{\F}) =
\lambda_{T(\F)}^{-1}\,  \lambda_{T(\F)} = \mrm{id}  \ .
$$
To see (2), note that the canonical morphism $\theta'$ may be iterated to $\theta'^{n}_{\F^{\bullet}} :
T^{n+1}\mbf{s}(\F^{\bullet})\longrightarrow \mbf{s}T^{n+1}(\F^{\bullet})$, with
$(\theta')^{n+1}_{c\F}\, T^{n+1}(\lambda_\F)=\lambda_{T^{k+1}\F}$. Explicitly, $(\theta')^{n}_{\F^{\bullet}}$ is given by the composition
$$
\xymatrix@C=35pt{
T^{n+1}\mbf{s}(\F^{\bullet})\ar[r]^-{T(\theta'^{n-1}_{\F^{\bullet}})} & T\mbf{s}T^{n}(\F^{\bullet}) \ar[r]^-{\theta'_{T^{n}(\F^{\bullet})}}
& \mbf{s}T^{n+1}(\F^{\bullet})  \ .
}
$$
Since the diagrams (\ref{compatible}) are commutative, it follows that
$(\theta')^{\circ}:G^{\circ}\mbf{s}_{\bullet}(\F^{\bullet})\longrightarrow
\mbf{s}_{\bullet}G^{\circ}(\F^{\bullet}) $ is a cosimplicial morphism.
In addition, using the fact that $\theta'_{\F^{\bullet}}\, \eta_{\simple_{\bullet}(\F^{\bullet})}
= \simple_{\bullet}(\eta_{\F^{\bullet}})$ it may be proved by induction that $\theta'^{\circ}_{\F^{\bullet}}\,\eta^{\circ}_{\mbf{s}_{\bullet}(\F^{\bullet})}
=  \mbf{s}_{\bullet}(\eta^{\circ}_{\F^{\bullet}})$. In particular, for $\F^{\bullet}= G^{\bullet}(\F)$ we obtain the commutative
diagram of cosimplicial sheaves
$$
\xymatrix{
 c^{\circ}\mbf{s}_{\bullet}G^{\bullet}(\F) \ar[rd]_{\eta^{\circ}_{\mbf{s}_{\bullet}G^{\bullet}(\F)}} \ar[rr]^{\mbf{s}_{\bullet}(\eta^{\circ}_{G^{\bullet}(\F)})}
& & \mbf{s}_{\bullet}G^{\circ}G^{\bullet}(\F)   \\
     &   G^{\circ}\mbf{s}_{\bullet}G^{\bullet}(\F) \ar[ru]_{\theta'^{\circ}_{G^{\bullet}(\F)}}  &
}  \ .
$$
Hence, applying the simple functor we deduce that $\mbf{s}_{\circ}(\theta'^{\circ}_{G^{\bullet}(\F)}) \, \mbf{s}_{\circ}(\eta^{\circ}_{\mbf{s}_{\bullet}G^{\bullet}(\F)})
= \mbf{s}_{\circ} \mbf{s}_{\bullet}(\eta^{\circ}_{G^{\bullet}(\F)}) $. Assume it proved that $\mbf{s}_{\circ} \mbf{s}_{\bullet} (\eta^{\circ}_{G^{\bullet}(\F)})\in\mc{S}$.
In this case,
$$
\phi= \mbf{s}_{\circ} \mbf{s}_{\bullet}(\eta^{\circ}_{G^{\bullet}(\F)}) \, \lambda_{\mbf{s}_{\bullet}G^{\bullet}(\F)}
= \mbf{s}_{\circ}(\theta'^{\circ}_{G^{\bullet}(\F)}) \, \mbf{s}_{\circ}(\eta^{\circ}_{\mbf{s}_{\bullet}G^{\bullet}(\F)}) \,
\lambda_{\mbf{s}_{\bullet}G^{\bullet}(\F)} =  \mbf{s}_{\circ}(\theta'^{\circ}_{G^{\bullet}(\F)}) \, \rho_{\mbb{H}_{\X} (\F)}
$$
is an isomorphism of $\loc{\Sh{\X}{\D}}{S}$, so $\sigma_{\F}= \phi^{-1}\, \mbf{s}_{\circ}(\theta'^{\circ}_{G^{\bullet}(\F)})$ is a section of
$\rho_{\mbb{H}_{\X} (\F)}$. To finish, it remains to be shown that $\mbf{s}_{\circ} \mbf{s}_{\bullet}(\eta^{\circ}_{G^{\bullet}(\F)})\in\mc{S}$.
This happens if and only if $\mbf{s}_{\bullet} \mbf{s}_{\circ}(\eta^{\circ}_{G^{\bullet}(\F)})\in\mc{S}$.
For a fixed $n\geq 0$, the coaugmentation $\eta^{\circ}_{G^{n}(\F)}= \eta^{\circ}_{T^{n+1}(\F)}:c^{\circ}T^{n+1}(\F)\longrightarrow
G^{\circ}T^{n+1}(\F)$ has an extra degeneracy. Hence we infer that $\mbf{s}_{\circ}(\eta^{\circ}_{G^{n}(\F)})$ is in $\mc{S}$ for each $n\geq 0$.
But then it follows from axiom (S4) that $\mbf{s}(n\rightarrow \mbf{s}_{\circ}(\eta^{\circ}_{G^{n}(\F)}))=\mbf{s}_{\bullet} \mbf{s}_{\circ}(\eta^{\circ}_{G^{\bullet}(\F)})\in\mc{S}$
as required.
\end{proof}

\subsubsection{} The class $\mc{W}$ of local equivalences is by definition equal to $(p^{\ast})^{-1}\mrm{E}$. Below we prove that $\mc{W}=T^{-1}\mc{S}=\mbb{H}_{\X}^{-1}\mc{S}$ as well.

\begin{proposition}\label{localeq} Assume that $\X$ and $(\D, \mathrm{E})$ satisfy the hypotheses
\emph{(\ref{hipotesis})}. Then, for a morphism  $f:\F\longrightarrow \G$  of sheaves, the following conditions are equivalent:
\begin{enumerate}
 \item[\emph{(1)}] $f$ is a local equivalence.
 \item[\emph{(2)}] $T(f):T(\F)\longrightarrow T(\G)$ is a global equivalence.
 \item[\emph{(3)}] $\mbb{H}_{\X}(f):\mbb{H}_{\X} (\F)\longrightarrow \mbb{H}_{\X} (\G)$ is a global
equivalence.
\end{enumerate}
\end{proposition}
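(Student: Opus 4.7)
My plan is to split the three-way equivalence into $(1) \Leftrightarrow (2)$ and $(2) \Leftrightarrow (3)$, using in each case a retract argument inside an arrow category backed by closure of saturated classes under retracts. The adjunction $(p^{\ast},p_{\ast})$ together with hypothesis~(\ref{hipotesis}) will carry the first equivalence, while Lemma~\ref{retracto}(1) and axiom~(S4) of the objectwise descent structure on $\Sh{\X}{\D}$ (Proposition~\ref{descensohaces}) will carry the second.

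For $(1) \Rightarrow (2)$ I unfold $(T f)(U) = \prod_{x \in X}\prod_{u \in x^{\ast}(\e U)} x^{\ast} f$ and invoke (G2): $\mrm{E}$ is closed under arbitrary products, so each $(Tf)(U) \in \mrm{E}$ and $T f \in \mc{S}$. For $(2) \Rightarrow (1)$, I build a retract diagram in the arrow category of $\mc{D}^{X}$. Naturality of $\eta:\mrm{id} \to T$ applied to $f$ gives the top square $p^{\ast} T f \circ p^{\ast}\eta_{\F} = p^{\ast}\eta_{\G}\circ p^{\ast}f$; naturality of $\varepsilon:p^{\ast}p_{\ast}\to \mrm{id}$ applied to $p^{\ast} f$ gives the bottom square $p^{\ast} f \circ \varepsilon_{p^{\ast}\F} = \varepsilon_{p^{\ast}\G}\circ p^{\ast}T f$; and the triangle identities $\varepsilon_{p^{\ast}\F}\circ p^{\ast}\eta_{\F} = \mrm{id}$ (analogously for $\G$) give that the vertical composites are identities. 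Since $T f \in \mc{S}$ yields $p^{\ast}T f \in \mrm{E}^{X}$ (by the $\mrm{E}$-exactness of filtered colimits from (G2)), and $\mrm{E}^{X}$ is saturated hence closed under retracts, I deduce $p^{\ast} f \in \mrm{E}^{X}$, i.e.\ $f \in \mc{W}$.

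For $(2) \Rightarrow (3)$ I first record that $T(\mc{S}) \subset \mc{S}$, by combining $p^{\ast}(\mc{S}) \subset \mrm{E}^{X}$ with $p_{\ast}(\mrm{E}^{X}) \subset \mc{S}$ (both from (G2)); hence inductively $T^{n+1} f \in \mc{S}$ for every $n \geq 0$, so $G^{\bullet}(f)$ is degreewise in $\mc{S}$, and axiom~(S4) gives $\mbb{H}_{\X}(f) = \mbf{s}G^{\bullet}(f) \in \mc{S}$. For the converse $(3) \Rightarrow (2)$ I invoke Lemma~\ref{retracto}(1), which furnishes a natural section $\sigma_{\F}$ of $T\rho_{\F}$ in $\Sh{\X}{\D}[\mc{S}^{-1}]$; naturality of $\rho$ (in $\Sh{\X}{\D}$) and of $\sigma$ (in the localization) then exhibit $T f$ as a retract of $T\mbb{H}_{\X}(f)$ in the arrow category of $\Sh{\X}{\D}[\mc{S}^{-1}]$. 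If $\mbb{H}_{\X}(f) \in \mc{S}$ then $T\mbb{H}_{\X}(f) \in \mc{S}$, so it is an isomorphism in the localization; isomorphisms are closed under retracts, so $T f$ is one too, and saturation of $\mc{S}$ gives $T f \in \mc{S}$.

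The main obstacle is $(3) \Rightarrow (2)$: one cannot simply apply $p^{\ast}$ to $\mbb{H}_{\X}(f)$ and commute it past $\mbf{s}$, because the comparison $\theta:p^{\ast}\mbf{s}\to \mbf{s}p^{\ast}$ is not a priori in $\mrm{E}$---that would be condition~(3) of Theorem~\ref{CartanEilenbergSheaves}. Lemma~\ref{retracto} circumvents this obstruction at the localized level by providing a splitting of $T\rho_{\F}$, but it is the one place where the full weight of the Godement machinery (extra degeneracies, Lemma~\ref{Diagonal}, axiom~(S4), and the compatibility diagrams between the descent structure and the Godement triple) has already been mobilized; with that lemma in hand, the retract argument above is formal.
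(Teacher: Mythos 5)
Your proof is correct and follows essentially the same route as the paper: $T(\mc{W})\subset\mc{S}$ via (G2) for $(1)\Rightarrow(2)$, the triangle-identity retract of $p^{\ast}f$ inside $p^{\ast}Tf$ for $(2)\Rightarrow(1)$, axiom (S4) applied to the degreewise global equivalence $G^{\bullet}(f)$ for $(2)\Rightarrow(3)$, and Lemma~\ref{retracto}(1) exhibiting $T(f)$ as a retract of $T\mbb{H}_{\X}(f)$ for $(3)\Rightarrow(2)$. The only difference is that you spell out the retract diagrams and the saturation/closure-under-retracts steps more explicitly than the paper does.
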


\begin{proof} (1) implies (2) since $T(\mc{W})\subset \mc{S}$. Conversely,
if $T(f)$ is a global equivalence, it is in particular a local one, so $p^{\ast}T(f)\in\mrm{E}$.
On the other hand, it follows from the triangle identities
of the adjoint pair $(p^{\ast},p_{\ast})$ that $p^{\ast}(f)$ is a retract of $p^{\ast}T(f)=p^{\ast}p_{\ast}p^{\ast}(f)$.
But $\mrm{E}$ being saturated, it is closed under retracts, and we deduce that $p^{\ast}(f)\in\mrm{E}$ as well.
But this is the same as saying that $f\in\Scal$. Therefore, (1) and (2) are equivalent.

Let us see that (2) implies (3). Assume that $T(f)\in\mc{S}$. Since
$T(\mc{S})\subset T(\mc{W})\subset \mc{S}$, then $G^{n}(f)=T^{n+1}(f)\in\mc{S}$ for all $n\geq 0$,
and it follows from (S4) that $\mbb{H}_{\X}(f)=\mbf{s}G^\bullet (f)\in\mc{S}$ as required.
Finally, if $\mbb{H}_{\X} (f)\in\mc{S}$ then also
$T\mbb{H}_{\X} (f)\in\mc{S}$. By Lemma \ref{retracto} $T(f)$ is a retract of $T\mbb{H}_{\X} (f)$, so
$T(f)\in\mc{S}$ and (2) and (3) are equivalent as well.
\end{proof}

\subsubsection{} As announced, we deduce that the hypercohomology sheaf is always CE-fibrant.

\begin{proposition}\label{GodementFibrante}
Assume that $\X$ and $(\D, \mathrm{E})$ satisfy hypotheses
\emph{(\ref{hipotesis})}. Then, for any sheaf $\F$,
$\mbb{H}_{\X} (\F)$ is a CE-fibrant sheaf.
\end{proposition}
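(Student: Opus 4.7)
The plan is to verify directly the lifting property in the definition of CE-fibrant, using the two technical results already established: Proposition \ref{localeq} and Lemma \ref{retracto}(2). So let $w:\G\longrightarrow \G'$ be a local equivalence and $f:\G\longrightarrow \mbb{H}_\X(\F)$ a morphism in $\Sh{\X}{\D}[\Scal^{-1}]$; I need to produce a unique $g:\G'\longrightarrow \mbb{H}_\X(\F)$ in $\Sh{\X}{\D}[\Scal^{-1}]$ with $g\circ w=f$.

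For existence, the idea is to push everything into the world of hypercohomology sheaves, where $\mbb{H}_\X(w)$ is invertible in the localized category by Proposition \ref{localeq} (since $w\in\mc{W}$ implies $\mbb{H}_\X(w)\in\mc{S}$), and then come back via the retraction provided by Lemma \ref{retracto}(2). Concretely, denoting by $\sigma_{\F}:\mbb{H}_\X^2(\F)\longrightarrow \mbb{H}_\X(\F)$ the natural morphism in $\Sh{\X}{\D}[\Scal^{-1}]$ with $\sigma_{\F}\circ \rho_{\mbb{H}_\X(\F)}=\mrm{id}$, I would define
$$
g \;=\; \sigma_{\F}\,\circ\,\mbb{H}_\X(f)\,\circ\,\mbb{H}_\X(w)^{-1}\,\circ\,\rho_{\G'}.
$$
The verification $g\circ w=f$ is then a straightforward chain of equalities in $\Sh{\X}{\D}[\Scal^{-1}]$: naturality of $\rho$ gives $\rho_{\G'}\circ w=\mbb{H}_\X(w)\circ \rho_{\G}$ and $\mbb{H}_\X(f)\circ \rho_{\G}=\rho_{\mbb{H}_\X(\F)}\circ f$, after which the $\mbb{H}_\X(w)^{-1}\mbb{H}_\X(w)$ cancels and the retraction identity $\sigma_{\F}\circ \rho_{\mbb{H}_\X(\F)}=\mrm{id}$ finishes the computation.

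For uniqueness, suppose $g,g':\G'\longrightarrow \mbb{H}_\X(\F)$ both satisfy $g\circ w=g'\circ w=f$. Applying $\mbb{H}_\X$ yields $\mbb{H}_\X(g)\circ \mbb{H}_\X(w)=\mbb{H}_\X(g')\circ \mbb{H}_\X(w)$, and since $\mbb{H}_\X(w)$ is an isomorphism in $\Sh{\X}{\D}[\Scal^{-1}]$ we deduce $\mbb{H}_\X(g)=\mbb{H}_\X(g')$. Composing with $\rho_{\G'}$ and using naturality of $\rho$ at $g,g'$ gives $\rho_{\mbb{H}_\X(\F)}\circ g=\rho_{\mbb{H}_\X(\F)}\circ g'$; pre-composing both sides with the retraction $\sigma_{\F}$ yields $g=g'$.

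The only real obstacle is making sure that all the morphisms and identities above actually live in (and are equalities within) the localized category $\Sh{\X}{\D}[\Scal^{-1}]$, because $\rho_{\G}$, $\rho_{\G'}$ and $\sigma_{\F}$ are only defined there, not strictly in $\Sh{\X}{\D}$. That is, some care is needed when saying $f$ and $g$ are ``morphisms of the localized category''; but naturality of $\rho$ is inherited from $\Sh{\X}{\D}$, the inverse of $\mbb{H}_\X(w)$ exists by Proposition \ref{localeq}, and the retraction identity for $\sigma_{\F}$ is exactly the content of Lemma \ref{retracto}(2), so both the existence and uniqueness arguments are entirely formal once these three ingredients are in place.
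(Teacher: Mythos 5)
Your argument is correct and follows essentially the same route as the paper's own proof: both rest on $\mbb{H}_\X(\W)\subset\Scal$ from Proposition \ref{localeq} and on the natural retraction $\sigma_\F$ of $\rho_{\mbb{H}_\X(\F)}$ from Lemma \ref{retracto}(2), with the same formula for the lift and the same observation that a morphism into $\mbb{H}_\X(\F)$ is determined by its image under $\mbb{H}_\X$. The only point to make explicit is that $\rho$ remains natural with respect to morphisms of $\Sh{\X}{\D}[\Scal^{-1}]$ because $\mbb{H}_\X(\Scal)\subset\Scal$, which you already flag.
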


\begin{proof} Hypothesis \ref{hipotesis} guarantee that $T(\mc{S})\subset \mc{S}$, hence $\mbb{H}_{\X}(\mc{S})\subset \mc{S}$. By Lemma \ref{retracto}, it is equipped with natural transformations $\rho: \mrm{id}\longrightarrow \mbb{H}_{\X}$ and $\sigma:\mbb{H}_{\X}^2\longrightarrow \mbb{H}_{\X}$ such
that $\sigma\,\rho=\mrm{id}$.

As a first consequence, a morphism $g:\G\longrightarrow \mbb{H}_{\X} (\F)$ of $\Sh{\X}{\D}[\mc{S}^{-1}]$ is uniquely determined by $\mbb{H}_{\X}(g)$. Indeed, from the commutative diagram
$$
\xymatrix@C=30pt{
\G \ar[r]^-{g} \ar[d]_{\rho_{\G}} & \mbb{H}_{\X} (\F) \ar[r]^-{1} \ar[d]_{\rho_{\mbb{H}_{\X} (\F)}} & \mbb{H}_{\X} (\F) \\
\mbb{H}_{\X}(\G) \ar[r]^-{\mbb{H}_{\X}(g)}  & \mbb{H}_{\X} (\F)^2 \ar[ru]_{\sigma_{\F}}
}$$
we deduce that $g=\sigma_{\F}\,\mbb{H}_{\X}(g)\,\rho_{\G}$ as claimed.
Consider now a lifting problem
$$
\xymatrix{
\G \ar[r]^w \ar[d]_-f    &   \G'      \\
\mbb{H}_{\X} (\F)                      & }
$$
where $f$ is a morphism of $\Sh{\X}{\D}[\mc{S}^{-1}]$ and $w$ is a morphism
of $\Sh{\X}{\D}$ that is a local equivalence.
Since $\mbb{H}_{\X}(\mc{W})\subset \mc{S}$, given two solutions $g,g':\G'\longrightarrow \mbb{H}_{\X} (\F)$ of this lifting problem, we would have
$\mbb{H}_{\X}(g)=\mbb{H}_{\X}(f)\, (\mbb{H}_{\X}(w))^{-1}= \mbb{H}_{\X}(g')$. Hence
$g=g'$, and we need only see that there is at least one lifting for the above diagram. But $g=\sigma_{\F}\,\mbb{H}_{\X}(f)\, (\mbb{H}_{\X}(w))^{-1}\,\rho_{\mbb{H}_{\X} (\F)}$
is easily seen to satisfy $g\,w = f$, so we are done.
\end{proof}

\subsection{Characterization}
In view of the last proposition, we conclude that if for any sheaf  $\eta_\F : \F \longrightarrow \mathbb{H}_\X (\F)$ were in
${\W}$, then  $(\Sh{\X}{\D},\Scal, \W)$ would be a Cartan-Eilenberg category with $(\mbb{H}_{\X},\rho)$ as a resolvent functor. Below we show that this fact is indeed equivalent to two other conditions: one of them is Thomason's descent property for hypercohomology sheaves, while the other one consists of a weak commutation between the simple functor and stalks.

\subsubsection{} Let us state precisely what we mean by the later condition.

\begin{definition} Let $\X$ be a Grothendieck site and $(\D, \mathrm{E})$ a descent category. We say that the simple functor  {\it commutes weakly with stalks\/} if for each sheaf $\F$ the map $\theta_{G^\bullet \F} : p^{\ast}\mbb{H}_{\X} (\F) =  p^{\ast} \simple G^\bullet (\F) \longrightarrow \simple p^{\ast} G^\bullet (\F)
$ in (\ref{psimple}) belongs to $\mrm{E}$.
\end{definition}

Equivalently, $\mbf{s}$ commutes weakly with stalks if for each point
$x\in X$ the canonical map $\theta_{G^\bullet \F}(x) : (\simple G^\bullet \F)_x \longrightarrow \simple (G^\bullet \F)_x$ is a weak equivalence.

\subsubsection{} We can now state and prove our first main result.

\begin{theorem}\label{CartanEilenbergSheaves} Let $\X$ be a Grothendieck site and $(\D, \mathrm{E})$ a descent
category satisfying the hypotheses \emph{(\ref{hipotesis})}. Then, the
following statements are equivalent:
\begin{enumerate}
 \item[\emph{(1)}] $(\Sh{\X}{\D}, \Scal, \mc{W})$ is a right Cartan-Eilenberg category and for every sheaf $\F \in \Sh{\X}{\D}$,
$\rho_\F : \F \longrightarrow \mbb{H}_{\X} (\F)$ is a CE-fibrant model.
 \item[\emph{(2)}] For every sheaf $\F\in \Sh{\X}{\D}$, $\rho_\F : \F\longrightarrow \mbb{H}_{\X} (\F)$ is in $\mc{W}$.
 \item[\emph{(3)}] The simple functor commutes weakly with stalks.
 \item[\emph{(4)}] For every sheaf $\F\in \Sh{\X}{\D}$, $\mbb{H}_{\X} (\F)$ satisfies Thomason's descent; that is, $\rho_{\mbb{H}_{\X} (\F)} :
\mbb{H}_{\X} (\F) \longrightarrow \mathbb{H}^2_\X (\F)$ is in $\Scal $.
\end{enumerate}
\end{theorem}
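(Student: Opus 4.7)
I will prove the four equivalences by the short cycle (1)$\Leftrightarrow$(2), (2)$\Leftrightarrow$(3) and (2)$\Leftrightarrow$(4), leaning on Proposition \ref{GodementFibrante} and Lemma \ref{retracto}. The equivalence (1)$\Leftrightarrow$(2) is essentially tautological: since Proposition \ref{GodementFibrante} already shows $\mbb{H}_{\X}(\F)$ is CE-fibrant for every $\F$, the morphism $\rho_{\F}$ is a CE-fibrant model precisely when it becomes an isomorphism in $\Sh{\X}{\D}[\mc{W}^{-1}]$, i.e.\ when $\rho_{\F}\in\overline{\mc{W}}=\mc{W}$; thus (1) is exactly the conjunction of (2) over all $\F$.

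For (2)$\Leftrightarrow$(3) the first task is to derive, in $\mc{D}^{X}$, the identity
\[
\theta_{G^{\bullet}\F}\cdot p^{\ast}\rho_{\F} \;=\; \mbf{s}(p^{\ast}\eta_{\F})\cdot \lambda_{p^{\ast}\F}.
\]
This follows from the factorisation $\rho_{\F}=\mbf{s}(\eta_{\F})\cdot\lambda_{\F}$, the naturality of $\theta$ in \eqref{psimple} applied to the cosimplicial map $\eta_{\F}:c(\F)\to G^{\bullet}(\F)$, and the compatibility $\lambda_{p^{\ast}\F}=\theta_{c\F}\cdot p^{\ast}\lambda_{\F}$ proved in the lemma preceding Lemma \ref{retracto}. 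The right-hand side is always in $\mrm{E}$: Lemma \ref{DegeneracionExtraGodement} supplies an extra degeneracy to $p^{\ast}\eta_{\F}$, so $\mbf{s}(p^{\ast}\eta_{\F})\in\mrm{E}$ by Lemma \ref{extraDeg}, while $\lambda_{p^{\ast}\F}\in\mrm{E}$ by axiom (S3). Two-out-of-three in the saturated class $\mrm{E}$ then forces $p^{\ast}\rho_{\F}\in\mrm{E}$ (condition (2)) to be equivalent to $\theta_{G^{\bullet}\F}\in\mrm{E}$ (condition (3)).

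Finally, for (2)$\Leftrightarrow$(4), the direction (2)$\Rightarrow$(4) is Whitehead's theorem: applying (2) to $\mbb{H}_{\X}(\F)$ yields $\rho_{\mbb{H}_{\X}\F}\in\mc{W}$, a weak equivalence between the CE-fibrant objects $\mbb{H}_{\X}(\F)$ and $\mbb{H}_{\X}^{2}(\F)$, which is therefore already in $\mc{S}$. The reverse implication (4)$\Rightarrow$(2) is the only non-routine step. Applying the identity above at $\F'=\mbb{H}_{\X}(\F)$ and using (4) gives $\theta_{G^{\bullet}\mbb{H}_{\X}\F}\in\mrm{E}$, i.e.\ condition (3) for every sheaf of the form $\mbb{H}_{\X}(\F)$. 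The plan is to transfer (3) from $\mbb{H}_{\X}(\F)$ to an arbitrary $\F$ through the natural section $\sigma_{\F}$ of Lemma \ref{retracto}(2) (which under (4) becomes a two-sided inverse of $\rho_{\mbb{H}_{\X}\F}$ in $\Sh{\X}{\D}[\mc{S}^{-1}]$) and the naturality of $\theta$ at the cosimplicial morphism $G^{\bullet}(\rho_{\F})$, reducing the problem to the unconditional equivalence $\phi = \mbf{s}_{\circ}(\theta'^{\,\circ}_{G^{\bullet}\F})\cdot\rho_{\mbb{H}_{\X}\F}\in\mc{S}$ extracted from the proof of Lemma \ref{retracto}(2); via Proposition \ref{localeq} this will translate into $\mbb{H}_{\X}(\rho_{\F})\in\mc{S}$, hence $\rho_{\F}\in\mc{W}$. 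I expect the main obstacle to be precisely this transfer step: propagating the weak commutation of $\mbf{s}$ with stalks from hypercohomology sheaves to arbitrary sheaves, by a bicosimplicial argument closely modelled on the proof of Lemma \ref{retracto}(2).
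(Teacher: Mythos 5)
Your proposal follows the paper's proof essentially step for step: the same reduction of (1) to (2) via Proposition \ref{GodementFibrante}, the same two-out-of-three argument on the identity $\theta_{G^{\bullet}\F}\circ p^{\ast}\rho_{\F}=\mbf{s}(p^{\ast}\eta_{\F})\circ\lambda_{p^{\ast}\F}$ for (2)$\Leftrightarrow$(3), and the same bicosimplicial comparison of $\rho_{\mbb{H}_{\X}\F}$ with $\mbb{H}_{\X}(\rho_{\F})$ (via the morphism $\phi\in\Scal$ from the proof of Lemma \ref{retracto}(2) together with Proposition \ref{localeq}) for (2)$\Leftrightarrow$(4). Your only deviation is deducing (2)$\Rightarrow$(4) directly from Whitehead's theorem for CE-fibrant objects, a harmless shortcut; the paper obtains both directions at once from the equivalence $\rho_{\mbb{H}_{\X}\F}\in\Scal\Leftrightarrow\mbb{H}_{\X}(\rho_{\F})\in\Scal$.
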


\begin{definition} We say that a descent category $(\D, \mathrm{E})$ is \textit{compatible with the site} $\X$
if the equivalent conditions of this theorem are satisfied.
\end{definition}

\begin{remark} As we will see in the examples, this is not necessarily the case for general $\X$ and $(\D,\mrm{E})$. Furthermore, it may happen that $(\Sh{\X}{\D},\Scal, \W)$ is indeed a Cartan-Eilenberg category,
but the CE-fibrant model of a sheaf $\F$ does not agree with $\mbb{H}_{\X} (\F)$ in general.
However, this does not pose much of a problem, and these drawbacks only occur when $\X$ is \lq\lq cohomologically big": a suitable finite cohomological dimension hypothesis on $\X$ ensures that the hypercohomology sheaf $\mbb{H}_{\X} (\F)$ is always a (CE-fibrant) model for $\F$.
\end{remark}

\begin{proof}[{\bf Proof of Theorem \ref{CartanEilenbergSheaves}}] By Proposition \ref{GodementFibrante} we know that $\mbb{H}_{\X} (\F)$ is CE-fibrant for any sheaf $\F$.
Hence, the equivalence between (1) and (2) is clear. Let us see that (2) and (3) are
equivalent. On the one hand, by definition, (2) holds if and only if
$p^{\ast}(\rho_{\F})$ is in $\mrm{E}^{X}$ for any sheaf $\F$.
On the other hand, the cosimplicial Godement resolution is such that the coaugmentation $p^{\ast}\eta_{\F}: cp^{\ast}(\F)\longrightarrow  p^{\ast} G^\bullet (\F)$ has an extra degeneracy. It then follows from Proposition \ref{descensohaces} that
$\simple p^{\ast}(\eta_{\F})$ belongs to $\mrm{E}$. Since $\lambda_{\G}:\G\longrightarrow\mbf{s}c(\G)$ is also in $\mrm{E}$ for any sheaf $\G$, we have the
following commutative diagram in which the arrows decorated with $\sim$ are in $\mrm{E}$:
$$
\xymatrix@C=35pt@R=30pt{p^{\ast}(\F) \ar[r]^-{p^{\ast}(\lambda_{\F})}_{\rotatebox[origin=c]{0}{$\sim$}} \ar[rd]^{\lambda_{p^{\ast}(\F)}}_{\rotatebox[origin=c]{315}{$\sim$}} & p^{\ast}\simple c (\F)  \ar[r]^-{p^{\ast}\simple(\eta_{\F})} \ar[d]^{\theta_{c(\F)}} & p^{\ast} \simple G^\bullet (\F) = p^{\ast}\mbb{H}_{\X} (\F)  \ar[d]^{\theta_{G^\bullet (\F)}} \\
 & \simple p^{\ast} c(\F) \ar[r]_{\rotatebox[origin=c]{0}{$\sim$}}^-{\simple p^{\ast} (\eta_{\F})}  & \simple p^{\ast} G^\bullet (\F)
}$$
Note that the composition of the morphisms in the top row is precisely
$p^{\ast}(\rho_{\F}):p^{\ast}(\F)\longrightarrow  p^{\ast}\mbb{H}_{\X} (\F)$.
By the 2-out-of-3 property, we conclude that $p^{\ast}(\rho_{\F})$ is  in $\mrm{E}$ if and only if $\theta_{G^\bullet (\F)}$
is in $\mrm{E}$. In other words, (2) and (3) are equivalent.

To finish with, we now show that (4) and (2) are equivalent.
Because of Proposition \ref{localeq}, $\W = \mathbb{H}_X^{-1}\Scal$. Hence, $\rho_\F : \F\longrightarrow \mbb{H}_{\X} (\F)$ is
in $\mc{W}$ if and only if $\mbb{H}_{\X}(\rho_\F)$ is in $\mc{S}$. It is then enough to check that
$\rho_{\mbb{H}_{\X} (\F)}$ is in $\mc{S}$ if and only if
$\mbb{H}_{\X}(\rho_\F)$ is. As in the proof of Lemma \ref{retracto}, the iteration of $\theta'$
gives a canonical morphism of cosimplicial objects ${\theta'}^{\circ}_{\F^{\bullet}}:G^{\circ}\mbf{s}_{\bullet}(\F^{\bullet})\longrightarrow
\mbf{s}_{\bullet}G^{\circ}(\F^{\bullet})$ that makes the following diagrams commute
$$
\xymatrix@C=50pt@R=40pt{
   & \mbf{s}_{\circ} \mbf{s}_{\bullet} G^{\circ} c^{\bullet}(\F)  \ar[r]_-{\rotatebox[origin=c]{0}{$\sim$}}^-{\mbf{s}_{\circ} \mbf{s}_{\bullet} G^{\circ}(\eta^{\bullet}_{\F})}
&\mbf{s}_{\circ} \mbf{s}_{\bullet} G^{\circ} G^{\bullet}(\F)  &  \mbf{s}_{\circ} \mbf{s}_{\bullet} c^{\circ} G^{\bullet}(\F)
\ar[l]^-{\rotatebox[origin=c]{0}{$\sim$}}_-{\mbf{s}_{\circ} \mbf{s}_{\bullet} (\eta^{\circ}_{G^{\bullet}(\F)})} \ar[ld]^-{\mbf{s}_{\circ} (\eta^{\circ}_{\mbf{s}_{\bullet} G^{\bullet}(\F)})}
\\
\mbf{s}_{\circ} G^{\circ}(\F) \ar@/_3pc/[rr]_{\mbb{H}_{\X}(\rho_{\F})} \ar[r]^-{\rotatebox[origin=c]{0}{$\sim$}}_-{\mbf{s}_{\circ} G^{\circ}(\lambda_{\F}) }
\ar[ru]_{\rotatebox[origin=c]{45}{$\sim$}}^-{\mbf{s}_{\circ}(\lambda_{ G^{\circ}(\F)})}
& \mbf{s}_{\circ} G^{\circ} \mbf{s}_{\bullet}c^{\bullet}(\F) \ar[r]_-{\mbf{s}_{\circ} G^{\circ}\mbf{s}_{\bullet}(\eta^{\bullet}_{\F})}
 \ar[u]^{\rotatebox[origin=c]{90}{$\sim$}}_-{\mbf{s}_{\circ}({\theta'}^{\circ}_{c^{\bullet}(\F)})}  & \mbf{s}_{\circ} G^{\circ} \mbf{s}_{\bullet}G^{\bullet}(\F)
 \ar[u]^-{\mbf{s}_{\circ}({\theta'}^{\circ}_{G^{\bullet}(\F)})} &  \mbf{s}_{\bullet} G^{\bullet}(\F) \ar[u]^-{\rotatebox[origin=c]{90}{$\sim$}}_-{\lambda_{\mbf{s}_{\bullet} G^{\bullet}(\F)}}
 \ar[l]_-{\rho_{\mbb{H}_{\X} (\F)}}
 }
$$
Note that all the arrows decorated with $\sim$ are global equivalences: for those arrows involving $\lambda$ this is clear
(in particular this is so for $\mbf{s}_{\circ}({\theta'}^{\circ}_{c^{\bullet}(\F)})$).
We already proved that $\mbf{s}_{\circ} \mbf{s}_{\bullet} (\eta^{\circ}_{G^{\bullet}(\F)})\in\mc{S}$,
and again using an extra degeneracy argument it readily follows that
 $\mbf{s}_{\circ} \mbf{s}_{\bullet} G^{\circ} (\eta^{\bullet}_{\F})\in\mc{S}$.
Consequently, $\rho_{\mbb{H}_{\X} (\F)}\in\mc{S}$ if and only if
$\mbb{H}_{\X}(\rho_\F)\in\mc{S}$.
\end{proof}

\subsubsection{}\label{toyexample} As a toy example, let's check what our main theorem says for the case of a topological space with
just one point.

Let $X=\left\{ x \right\}$ be a topological space with just one point and with its unique possible topology; namely, its open sets
are $\mathbf{Open} (X) = \left\{ \emptyset, \left\{ x \right\}\right\}$. So, every sheaf $\F \in \Sheaf (X,\D)$ is determined by its
value on $x$: $\F (x) \in \D$. The correspondence $\phi: \Sheaf (X,\D) \longrightarrow \D$, $\F \longmapsto \F(x)$ defines an
isomorphism of categories whose inverse is $\psi: \D \longrightarrow \Sheaf (X,\D)$, $ D \longmapsto \underline{D}$, where $\underline{D}$ is the sheaf defined by $\underline{D}(x) = D$.

Next, in a \emph{sober} space such as $X$, the points of the site $\mathbf{Open} (X)$ are in a bijective correspondence with
the points of $X$ as a plain topological space. So, we have exactly one Grothendieck point; that is, a couple of adjoint functors
$x^*: \Sheaf (X, \D) \rightleftarrows  \D : x_*$, defined by $x^*(\F) = \F_x = \F (x) $ and $ (x_*D)(x) = D$. In other words, $x^* = \phi$
and $x_* = \psi$. Hence, if we identify $\Sheaf (X, \D)$ with $\D$ using $\phi$ and $\psi$, $x_*$ and $x^*$  become the identity functor
of $\C$. Hence, the Godement construction $G^\bullet : \D \longrightarrow \Simpl\D$ is simply the constant cosimplicial functor.
Applying the simple functor, we get $\mathbb{H}_X (D) = \simple G^\bullet (D) =\simple cD  \simeq D$, because
of axiom (S3) of a descent category.

This entails that every object $D$ should be fibrant with the CE-structure given on $\D$ by our main theorem. The reader can easily
check that it is so: under the identifications $\phi$ and $\psi$, classes of local and global equivalences are just $\mathrm{E}$:
$\W = \Scal = \mathrm{E}$ and, with these local and global equivalences, every descent category is a CE-category in which every object
is fibrant.

So, condition (1) of our main theorem is indeed fulfilled. The reader can check, for instance, that condition (3), the
commutation between stalks and simple functor, is also trivially fulfilled too.

\subsubsection{} The first consequence of our main theorem is the following characterization of Thomason's descent property for sheaves of spectra.

\begin{corollary}\label{Thomasondescentproperty} If $(\D, \mathrm{E})$ is compatible with the site $\X$,
then a sheaf $\F\in\Sh{\X}{\D}$ satisfies Thomason's descent if and only if it is a CE-fibrant sheaf.
\end{corollary}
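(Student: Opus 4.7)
The plan is to exploit the CE-structure made available by Theorem \ref{CartanEilenbergSheaves} together with the fact (Proposition \ref{GodementFibrante}) that $\mathbb{H}_{\X}(\F)$ is always CE-fibrant. Compatibility of $(\D,\mrm{E})$ with $\X$ is precisely condition $(2)$ of that theorem, so the universal morphism $\rho_\F:\F\longrightarrow\mathbb{H}_{\X}(\F)$ is a weak equivalence (i.e.\ in $\W$) for every $\F$. These two facts reduce the corollary to a purely formal manipulation in the CE-category $(\Sh{\X}{\D},\Scal,\W)$.

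For the implication (Thomason's descent $\Rightarrow$ CE-fibrant), I would argue as follows. Assume $\rho_\F\in\Scal$. Then $\rho_\F$ becomes an isomorphism in $\loc{\Sh{\X}{\D}}{S}$, so $\F$ and $\mathbb{H}_{\X}(\F)$ are isomorphic in that localized category. Being CE-fibrant is a lifting condition formulated entirely in $\loc{\Sh{\X}{\D}}{S}$ (see Definition \ref{fibrant}), hence invariant under isomorphisms there; since $\mathbb{H}_{\X}(\F)$ is CE-fibrant by Proposition \ref{GodementFibrante}, so is $\F$.

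For the converse implication (CE-fibrant $\Rightarrow$ Thomason's descent), I would invoke Whitehead's theorem, which holds in $(\Sh{\X}{\D},\Scal,\W)$ because both $\Scal$ and $\W$ are saturated (as noted in the remark after Definition \ref{fibrant}). Explicitly, $\rho_\F$ is a morphism in $\W$ between two CE-fibrant objects $\F$ and $\mathbb{H}_{\X}(\F)$, hence it already belongs to $\Scal$, i.e.\ $\F$ satisfies Thomason's descent.

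The only mildly nontrivial point is verifying that the CE-fibrant condition really is invariant under isomorphisms of $\loc{\Sh{\X}{\D}}{S}$, but this is immediate from its formulation: if $\F\cong\F'$ in $\loc{\Sh{\X}{\D}}{S}$, any lifting problem against $\F$ transports bijectively to one against $\F'$ through this isomorphism. So no real obstacle is expected; the work was already done in Theorem \ref{CartanEilenbergSheaves} and Proposition \ref{GodementFibrante}, and the corollary is a clean two-line consequence.
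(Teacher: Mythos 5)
Your argument is correct and is precisely the (omitted) reasoning the paper intends: the corollary is stated without proof as an immediate consequence of Theorem \ref{CartanEilenbergSheaves} and Proposition \ref{GodementFibrante}, and your two directions --- transporting CE-fibrancy along the isomorphism $\rho_\F$ in $\loc{\Sh{\X}{\D}}{S}$, and Whitehead's theorem for the saturated classes $\Scal\subset\W$ --- are exactly the formal steps that make it immediate. Nothing is missing.
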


\subsubsection{} The existence of an associated sheaf functor, or \textit{sheafification}, $(-)^a : \PrSh{\X}{\D}\longrightarrow\Sh{\X}{\D}$
guarantees that the homotopy theory of presheaves is the same as the homotopy theory of sheaves, because
the adjoint pair $(-)^a : \PrSh{\X}{\D}\rightleftarrows\Sh{\X}{\D}:\mbf{i}$, where $\mbf{i}$ is the inclusion functor, induces an equivalence of categories
$\loc{\PrSh{\X}{\D}}{\W} \simeq \loc{\Sh{\X}{\D}}{\W}$. Although an associated sheaf functor may not exist for $\D$, when
$(\D,\mrm{E})$ is compatible with the site $\X$ the hypercohomology sheaf may be
thought of as a `homotopical' sheafification functor. More precisely, the adjoint pair $(p^{\ast},p_{\ast})$ is also an adjoint pair
$$
\xymatrix{ {\PrSh{\X}{\D} }  \ar@<0.5ex>[r]^-{p^*}  &  {\mc{D}^{X} }
\ar@<0.5ex>[l]^-{p_*}
}
$$
and the induced triple on $\PrSh{\X}{\D}$ allows an analogous definition $\mbb{H}_{\X}(\F)=\mbf{s}G^{\bullet}(\F)$ for a presheaf $\F$, which enjoys the same properties as
in the sheaf case. In addition, $T(\F)=p_{\ast}p^{\ast}(\F)$ is a sheaf,
and so is $\mbb{H}_{\X}(\F)$.

\begin{corollary}\label{hacesiprehaceslomismoson} Let $(\D, \mathrm{E})$ be a descent category compatible with the site $\X$. Then
$$
\xymatrix{ {\PrSh{\X}{\D}[\W^{-1}]  }  \ar@<0.5ex>[r]^-{\mathbb{H}_\X}  &  { \Sh{\X}{\D}[\W^{-1}]   }
\ar@<0.5ex>[l]^-{\mbf{i}}
}
$$
are inverse equivalences of categories.
\end{corollary}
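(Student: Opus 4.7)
My plan is to extend the main ingredients that go into Theorem \ref{CartanEilenbergSheaves} from sheaves to presheaves, and then obtain the equivalence formally from the natural transformation $\rho$.

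First I would check that $\mathbb{H}_\X : \PrSh{\X}{\D} \longrightarrow \Sh{\X}{\D}$ is well defined, i.e.\ that $\mathbb{H}_\X(\F)$ is indeed a sheaf for any presheaf $\F$. The key observation is that $p_\ast : \D^X \longrightarrow \PrSh{\X}{\D}$ factors through $\Sh{\X}{\D}$ (by formula (\ref{gratacels}), $p_\ast D = \prod_{x} x_\ast D_x$ is already a sheaf), so $T = p_\ast p^\ast$ takes values in sheaves; since the simple functor commutes with limits and is defined objectwise, $\mathbb{H}_\X(\F) = \simple G^\bullet(\F)$ is a sheaf as well. Next, I would observe that the proofs of Proposition \ref{localeq} and of the implication (3)$\Rightarrow$(2) in Theorem \ref{CartanEilenbergSheaves} only use the adjunction $(p^\ast, p_\ast)$, the cosimplicial construction associated with its triple, the objectwise descent structure on $\PrSh{\X}{\D}$ given by Lemma \ref{CosimplicialCategOfFunctors}, and the weak commutation of $\simple$ with stalks. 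All of these are available on presheaves, so the same arguments give: for every presheaf $\F$, the coaugmentation $\rho_\F : \F \longrightarrow \mathbb{H}_\X(\F)$ lies in $\W$, and $\mathbb{H}_\X$ sends $\W$ to $\Scal$.

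With these in hand the equivalence is almost formal. The inclusion $\mathbf{i}$ clearly preserves $\W$ (local equivalences are defined stalkwise and both categories share the same notion), so it descends to a functor on localizations. Similarly, $\mathbb{H}_\X : \PrSh{\X}{\D} \longrightarrow \Sh{\X}{\D}$ sends $\W$ into $\Scal \subset \W$ by the extended Proposition \ref{localeq}, hence descends. Now $\rho$ provides natural transformations $\id \Longrightarrow \mathbf{i}\,\mathbb{H}_\X$ on $\PrSh{\X}{\D}$ and $\id \Longrightarrow \mathbb{H}_\X\,\mathbf{i}$ on $\Sh{\X}{\D}$ whose components all lie in $\W$; after inverting $\W$ these become natural isomorphisms, which exhibits $\mathbf{i}$ and $\mathbb{H}_\X$ as mutually inverse equivalences of the localized categories.

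The only genuine point to verify is thus the extension of Proposition \ref{localeq} and of the argument proving $(3) \Rightarrow (2)$ in Theorem \ref{CartanEilenbergSheaves} to presheaves; this is the step I expect to require the most care, but the proofs transfer verbatim because they rely exclusively on the Godement triple on $\PrSh{\X}{\D}$, on Lemma \ref{DegeneracionExtraGodement} (which is stated for presheaves), on axiom (S4) of the objectwise descent structure, and on the hypothesis that $\simple$ commutes weakly with stalks, all of which are already available in the presheaf setting.
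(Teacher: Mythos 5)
Your overall architecture is the same as the paper's: everything reduces to showing that $\rho_\F:\F\longrightarrow\mathbb{H}_\X(\F)$ is a local equivalence when $\F$ is a \emph{presheaf}, after which the formal argument with the natural transformations $\id\Rightarrow\mathbf{i}\,\mathbb{H}_\X$ and $\id\Rightarrow\mathbb{H}_\X\,\mathbf{i}$ gives the equivalence. Your preliminary observations (that $\mathbb{H}_\X(\F)$ is a sheaf because $T=p_*p^*$ lands in sheaves and $\simple$ preserves limits, and that Proposition \ref{localeq} and Lemma \ref{retracto} transfer to presheaves because they only use the triple structure) are all correct. The gap is in how you establish the key fact. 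You propose to run the implication $(3)\Rightarrow(2)$ of Theorem \ref{CartanEilenbergSheaves} for presheaves, claiming that the weak commutation of $\simple$ with stalks is ``already available in the presheaf setting''. It is not: condition (3) is, by definition, the statement that $\theta_{G^\bullet\G}\in\mathrm{E}$ for every \emph{sheaf} $\G$, and the compatibility hypothesis supplies nothing more. For a presheaf $\F$, the cosimplicial sheaf $G^\bullet(\F)$ is in general not of the form $G^\bullet(\G)$ for any sheaf $\G$ (there is no sheafification in this generality, and $p^*\F$ need not be the family of stalks of any sheaf), so $\theta_{G^\bullet\F}\in\mathrm{E}$ does not follow formally from compatibility. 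Worse, the very diagram used to prove $(2)\Leftrightarrow(3)$ shows that, for a fixed presheaf $\F$, the statement $\theta_{G^\bullet\F}\in\mathrm{E}$ is \emph{equivalent} to $\rho_\F\in\W$; so asserting the former as ``available'' is assuming exactly what has to be proved.

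The paper closes this gap by a different reduction that never invokes stalkwise commutation for presheaves: since $\mathbb{H}_\X(\F)$ is a sheaf, compatibility gives $\rho_{\mathbb{H}_\X(\F)}\in\W$; as this is a morphism between CE-fibrant sheaves, Whitehead's theorem upgrades it to $\rho_{\mathbb{H}_\X(\F)}\in\Scal$; the purely formal part of the proof of Theorem \ref{CartanEilenbergSheaves} (the diagram built from $\theta'$, $\lambda$ and extra degeneracies, which genuinely does transfer verbatim to presheaves) shows that $\rho_{\mathbb{H}_\X(\F)}\in\Scal$ if and only if $\mathbb{H}_\X(\rho_\F)\in\Scal$; and finally $\W=\mathbb{H}_\X^{-1}\Scal$ (Proposition \ref{localeq} extended to presheaves) yields $\rho_\F\in\W$. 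If you replace your appeal to condition (3) by this chain, the rest of your argument goes through.
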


\begin{proof}By hypothesis, $\rho_{\F}:\F\longrightarrow \mbb{H}_{\X}\mbf{i}(\F)$ is in $\mc{W}$, so it is an isomorphism of
$\loc{\Sh{\X}{\D}}{\W}$ for any sheaf $\F$. It remains to be shown that if $\F$ is now a presheaf then $\rho_{\F}:\F\longrightarrow \mbf{i}\mbb{H}_{\X} (\F)$
is in ${\W}$. Since $\mbb{H}_{\X}(\F)$
is a sheaf, $\rho_{\mbb{H}_{\X}(\F)}\in{\W}$. But $\rho_{\mbb{H}_{\X}(\F)}$ is a morphism between CE-fibrant sheaves and hence belongs to $\mc{S}$.
By the same proof as in Theorem
\ref{CartanEilenbergSheaves}, we infer that $\mbb{H}_{\X}(\rho_{\F})\in\mc{S}$ as well. Again, this means that $\rho_{\F}$ is a local equivalence as required.
\end{proof}

\subsubsection{} We have seen that a descent structure on $(\D,\mrm{E})$ always induces one on $(\Sh{\X}{\D},\Scal)$ defined objectwise. We have another descent structure, though.

\begin{proposition} Assume that a descent category $(\D, \mathrm{E})$ is compatible with the site $\X$ and that filtered colimits commute with finite products
in $\D$. Then, $(\Sh{\X}{\D},\W)$ is a descent category with simple functor
$$
\simple'=\simple \,  \mbb{H}_{\X} : \Simpl \Sh{\X}{\D}\longrightarrow \Sh{\X}{\D} \ .
$$
\end{proposition}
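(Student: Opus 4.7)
The plan is to invoke the Transfer Lemma \ref{transferLema} with the stalks functor $\psi = p^{\ast}: \Sh{\X}{\D} \longrightarrow \D^X$, where $\D^X$ carries the objectwise descent structure of Lemma \ref{CosimplicialCategOfFunctors} with weak equivalences $\mrm{E}^X$. By the very definition of local equivalences, $\W = (p^{\ast})^{-1}\mrm{E}^X$, so the pulled-back class of weak equivalences matches the desired $\W$. On the sheaf side I take $\simple' = \simple\,\mbb{H}_{\X}$, with $\mbb{H}_{\X}$ applied degreewise to a cosimplicial sheaf and $\simple$ the objectwise simple of Proposition \ref{descensohaces}; the natural transformations $\lambda'$ and $\mu'$ are obtained by combining the objectwise $\lambda$ and $\mu$ with the resolvent $\rho$. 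Condition FD1 is immediate: since stalks are filtered colimits of sections and the standing extra hypothesis says filtered colimits commute with finite products in $\D$, $p^{\ast}$ preserves finite products strictly.

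Condition FD2 demands a natural weak equivalence $\theta^{\W}: p^{\ast}\simple' \longrightarrow \simple^X p^{\ast}$ in $\D^X$. Since Remark \ref{LambdaMuFlechas} permits the natural transformations of descent structures to be zigzags, my candidate is
\[
p^{\ast}\simple\,\mbb{H}_{\X}(\F^{\bullet}) \xrightarrow{\theta_{\mbb{H}_{\X}(\F^{\bullet})}} \simple^X p^{\ast}\mbb{H}_{\X}(\F^{\bullet}) \xleftarrow{\simple^X p^{\ast}(\rho_{\F^{\bullet}})} \simple^X p^{\ast}(\F^{\bullet}),
\]
where $\theta$ is the natural comparison map of (\ref{psimple}). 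The right-hand arrow is in $\mrm{E}^X$ because, by Theorem \ref{CartanEilenbergSheaves}(2), each $\rho_{\F^n}$ is a local equivalence, so $p^{\ast}\rho_{\F^n}\in \mrm{E}$, and axiom (S4) of $\D^X$ preserves this after applying $\simple^X$.

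The main obstacle will be showing the left-hand arrow $\theta_{\mbb{H}_{\X}(\F^{\bullet})}$ also lies in $\mrm{E}^X$. Since $\mbb{H}_{\X}(\F^n) = \simple_m G^m(\F^n)$ is itself obtained from a Godement resolution, the cosimplicial sheaf $\mbb{H}_{\X}(\F^{\bullet})$ is the partial simple in one direction of the bicosimplicial sheaf $\{G^m(\F^n)\}_{m,n}$. Expanding $\theta_{\mbb{H}_{\X}(\F^{\bullet})}$ via its compatibility with $\mu$ and $\lambda$ (diagrams (\ref{compatible}) and the companion diagrams following them), then applying Lemma \ref{Diagonal} together with axioms (S2) and (S4), the verification should reduce to the statement that $\theta_{G^{\bullet}(\G)}\in \mrm{E}$ for auxiliary sheaves $\G$, which is exactly the compatibility condition (3) of Theorem \ref{CartanEilenbergSheaves}. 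Once FD1 and FD2 are settled, the Transfer Lemma delivers the desired descent structure $(\Sh{\X}{\D}, \W, \simple', \mu', \lambda')$.
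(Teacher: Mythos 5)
Your overall strategy --- transferring the descent structure along $\psi=p^{\ast}$ via Lemma \ref{transferLema} --- is not the paper's route and, more importantly, it cannot work, because your condition FD2 is false in the situations the proposition is meant to cover. FD2 would assert that for \emph{every} cosimplicial sheaf $\F^{\bullet}$ the objects $p^{\ast}\simple\,\mbb{H}_{\X}(\F^{\bullet})$ and $\simple\,p^{\ast}(\F^{\bullet})$ are isomorphic in $\D^{X}[\mrm{E}^{-1}]$; since the right leg of your zigzag is indeed in $\mrm{E}^{X}$ by (S4), this amounts to $\theta_{\mbb{H}_{\X}(\F^{\bullet})}\in\mrm{E}^{X}$, i.e.\ to the simple functor commuting weakly with stalks on \emph{arbitrary} cosimplicial sheaves. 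But compatibility (condition (3) of Theorem \ref{CartanEilenbergSheaves}) only gives this for the very special cosimplicial objects $G^{\bullet}(\G)$, where the finite cohomological dimension of $\X$ (or boundedness of the coefficients) controls the Godement direction; there is no such control in the external cosimplicial direction of a general $\F^{\bullet}$. Concretely, take $\D=\Cochains{\mathbb{Z}}$, $\X=\mathbb{N}\cup\{\infty\}$ the converging sequence (cohomological dimension $0$, so compatible by Theorem \ref{CEunboundedcomplexes}), $\B=j_{!}(\mathbb{Z}/2)_{\mathbb{N}}$, $\A$ the complex with $\A^{q}=\B$ for $q\le 0$ and zero differential, and $\F^{\bullet}=c(\A)^{K}$ with $K=B\mathbb{Z}/2$. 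Then $\F^{\bullet}_{\infty}=0$ since $\B_{\infty}=0$, so $\simple\,p^{\ast}(\F^{\bullet})$ vanishes at $\infty$; but $H^{0}\bigl(\simple\,\mbb{H}_{\X}(\F^{\bullet})(U_{j})\bigr)=\prod_{q\le 0}H^{-q}(K;\B(U_{j}))=\prod_{q\le 0}\bigoplus_{m\ge j}\mathbb{Z}/2$, and the element whose $q$-th component is $e_{j+|q|}$ survives to a nonzero class in the filtered colimit over $j$. So the two sides have non-isomorphic stalk cohomology at $\infty$ and no natural zigzag of weak equivalences can connect them. (Your reduction via Lemma \ref{Diagonal} also breaks down at a more mundane level: the diagonal of the bicosimplicial object $\{G^{m}(\F^{n})\}$ is $n\mapsto G^{n}(\F^{n})$, which is not of the form $G^{\bullet}(\G)$.) In addition, the Transfer Lemma takes the compatible weak equivalences $\mu'$ and $\lambda'$ as \emph{input}; your sketch of these defers exactly the content that has to be proved, e.g.\ $\simple$ applied to a levelwise-$\W$ map such as $\rho$ is not in $\W$ without further argument.

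The intended proof avoids stalks entirely and uses the interplay between $\W$ and $\Scal$. The extra hypothesis that filtered colimits commute with finite products gives $\W\times\W\subset\W$ and lets $T$ (hence $\mbb{H}_{\X}$) commute with finite products up to isomorphism. All remaining axioms for $(\Sh{\X}{\D},\W,\simple')$ follow from two facts already established: $\mbb{H}_{\X}(\W)\subset\Scal$ (Proposition \ref{localeq}) and $(\Sh{\X}{\D},\Scal,\simple)$ is a descent category (Proposition \ref{descensohaces}). For instance, (S4): if $f_{n}\in\W$ for all $n$ then $\mbb{H}_{\X}(f_{n})\in\Scal$, so $\simple'(f)=\simple\,\mbb{H}_{\X}(f)\in\Scal\subset\W$. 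For (S3), $\mbb{H}_{\X}c(\F)=c\,\mbb{H}_{\X}(\F)$, so one sets $\lambda'_{\F}=\lambda_{\mbb{H}_{\X}(\F)}\circ\rho_{\F}$, a composite of a $\W$- and an $\Scal$-map. For (S2) one uses that each level of $\mbb{H}_{\X}$ applied degreewise is CE-fibrant, so the relevant $\rho$'s are in $\Scal$ by Thomason descent, and one reduces to $\mu$ for $(\Sh{\X}{\D},\Scal,\simple)$. I would rewrite the argument along these lines rather than through $p^{\ast}$.
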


\begin{proof} The commutation of finite products with filtered colimits guarantees that $\W\prod\W\subset \W$. The fact
that $\simple'$ is a simple functor for $(\Sh{\X}{\D},\W)$ may be proved using that $\mbb{H}_{\X}(\mc{W})\subset \mc{S}$ and
that $\mbf{s}$ is a simple functor for $(\Sh{\X}{\mc{D}},\Scal)$.
\end{proof}

It follows from the results in \cite{Rod1} that
path and loop functors may be constructed for $(\Sh{\X}{\D},\W)$ in a natural way. They give rise to  well behaved fiber sequences,
satisfying the usual properties in $\loc{\Sh{\X}{\D}}{\W}$. In particular, $\loc{\Sh{\X}{\D}}{\W}$ is a triangulated category
provided that the loop functor is an equivalence of categories.

\subsection{Derived functors for sheaves}

\subsubsection{} The second consequence of our characterization of CE-fibrant sheaves, the existence of the right derived direct image functor, follows immediately (cf. \cite[th.6 ]{Br}).

\begin{corollary}\label{existenciaderivadoimagenesdirectas}
Let $f: \X \longrightarrow \Y$ be a continuous functor of Grothendieck sites and $(\D, \mathrm{E})$ a descent category compatible with the site $\X$. Then, $f_* : \Sh{\X}{\D} \longrightarrow \Sh{\Y}{\D}$ admits a right derived functor $ \rdf f_* : \Sh{\X}{\D}[\W^{-1}] \longrightarrow \Sh{\Y}{\D}[\W^{-1}]$
given by
$$
\rdf f_* (\F) = f_* \mbb{H}_{\X} (\F) \ .
$$
\end{corollary}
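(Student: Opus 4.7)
The plan is to apply the general derivability criterion for Cartan-Eilenberg categories (Proposition \ref{existenciaDerivado}) to $f_\ast$. Since $(\D,\mathrm{E})$ is assumed compatible with $\X$, Theorem \ref{CartanEilenbergSheaves} tells us that $(\Sh{\X}{\D}, \Scal_\X, \W_\X)$ is a right Cartan-Eilenberg category and that $\rho_\F : \F \longrightarrow \mbb{H}_\X(\F)$ is a CE-fibrant model of $\F$ for every sheaf $\F \in \Sh{\X}{\D}$. By the same token, $(\Sh{\Y}{\D}, \Scal_\Y, \W_\Y)$ is also a category with strong and weak equivalences and we have a localization functor $\gamma_\Y : \Sh{\Y}{\D} \longrightarrow \Sh{\Y}{\D}[\W_\Y^{-1}]$.

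The next step is to verify the hypothesis of Proposition \ref{existenciaDerivado}, namely that the composite $\gamma_\Y \circ f_\ast : \Sh{\X}{\D} \longrightarrow \Sh{\Y}{\D}[\W_\Y^{-1}]$ sends every global equivalence in $\Scal_\X$ to an isomorphism. This is a routine observation: because $f_\ast$ is defined objectwise as $(f_\ast \F)(V) = \F(f^{-1}(V))$, if $\varphi \in \Scal_\X$, i.e. $\varphi(U) \in \mathrm{E}$ for every $U\in\X$, then $(f_\ast \varphi)(V) = \varphi(f^{-1}(V)) \in \mathrm{E}$ for every $V\in\Y$, so $f_\ast \varphi \in \Scal_\Y \subset \W_\Y$, and $\gamma_\Y (f_\ast \varphi)$ is an isomorphism.

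With these two ingredients in place, Proposition \ref{existenciaDerivado} immediately yields the existence of a right derived functor $\rdf f_\ast : \Sh{\X}{\D}[\W^{-1}] \longrightarrow \Sh{\Y}{\D}[\W^{-1}]$, whose value on an object $\F$ is $f_\ast(M)$ for any CE-fibrant model $M$ of $\F$. Choosing the canonical CE-fibrant model $M = \mbb{H}_\X(\F)$ provided by Theorem \ref{CartanEilenbergSheaves}, we obtain the explicit formula $\rdf f_\ast(\F) = f_\ast \mbb{H}_\X(\F)$.

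There is no genuine obstacle here; the corollary is essentially a straight application of Proposition \ref{existenciaDerivado} to the Cartan-Eilenberg structure on $\Sh{\X}{\D}$ provided by the compatibility hypothesis, combined with the trivial fact that the objectwise-defined functor $f_\ast$ preserves objectwise (global) equivalences. The only mild point worth stating explicitly in the proof is the inclusion $\Scal_\Y \subset \W_\Y$, which ensures that $f_\ast$ factors through local equivalences after localization on the target category.
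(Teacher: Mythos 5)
Your proof is correct and follows essentially the same route as the paper: invoke Theorem \ref{CartanEilenbergSheaves} to get the CE-structure with $\mbb{H}_{\X}$ as resolvent functor, check that the objectwise-defined $f_*$ carries global equivalences to global (hence local) equivalences on $\Y$, and then apply Proposition \ref{existenciaDerivado}. The only difference is that you spell out the composition with the localization functor $\gamma_\Y$ explicitly, which the paper leaves implicit.
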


\begin{proof}
In view of Theorem \ref{CartanEilenbergSheaves}  and
Proposition \ref{existenciaDerivado}, we only need to show that $f_*$ sends global equivalences to local equivalences. But this is obvious: if $\varphi : \F \longrightarrow \G \in \Scal$, then, for every object $V \in \Y$, we have $f_*(\varphi)(V) = \varphi (f^{-1}(V)) : \F( f^{-1}(V) )  \longrightarrow \G ( f^{-1}(V) ) \in \mathrm{E}$. So $f_*(\varphi)$ is also a global equivalence and hence, a fortiori, a local one.
\end{proof}

If $U$ is an object of $X$, the same proof works for the $U$-sections functor $\Gamma (U, -) : \Sh{\X}{\D}
\longrightarrow \D$ because, by definition, $\Gamma (U, \F)=\F(U)$ sends global equivalences in $\Sh{\X}{\D}$ to equivalences in
$\D$. Hence,

\begin{corollary}\label{existenciaderivadosecciones}
Let $(\D, \mathrm{E})$ be a descent category compatible with the site $\X$. Then $\Gamma (U, -) : \Sh{\X}{\D}
\longrightarrow \D$ admits a right derived functor
$\rdf \Gamma (U,-)  : \Sh{\X}{\D}[\W^{-1}] \longrightarrow \D [\mathrm{E}^{-1}]$ given by
$$\rdf \Gamma (U,\F) = \Gamma (U,\mbb{H}_{\X} (\F)) \ .$$
\end{corollary}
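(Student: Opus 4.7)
The plan is to reduce the statement to the general derivability criterion for CE-categories, Proposition \ref{existenciaDerivado}, exactly as in the proof of Corollary \ref{existenciaderivadoimagenesdirectas}. Since $(\D, \mathrm{E})$ is by hypothesis compatible with $\X$, Theorem \ref{CartanEilenbergSheaves} tells us that $(\Sh{\X}{\D}, \Scal, \W)$ is a right Cartan-Eilenberg category whose CE-fibrant model of every sheaf $\F$ is given by $\rho_\F : \F \longrightarrow \mbb{H}_{\X}(\F)$. So all I need is to check that the functor $\Gamma(U, -) : \Sh{\X}{\D} \longrightarrow \D$ sends strong (global) equivalences to $\mrm{E}$, and then read off the value of the derived functor from Proposition \ref{existenciaDerivado}.

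The verification of the hypothesis is immediate: by the very definition of a global equivalence, if $\varphi : \F \longrightarrow \G$ belongs to $\Scal$ then $\varphi(V) \in \mrm{E}$ for every object $V \in \X$; in particular, $\Gamma(U, \varphi) = \varphi(U) \in \mrm{E}$. Thus $\Gamma(U, -)$ factors through the localisation $\Sh{\X}{\D}[\Scal^{-1}]$ composed with $\D \longrightarrow \D[\mrm{E}^{-1}]$.

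Applying Proposition \ref{existenciaDerivado} then produces a right derived functor
\[
\rdf \Gamma(U, -) : \Sh{\X}{\D}[\W^{-1}] \longrightarrow \D[\mrm{E}^{-1}],
\]
whose value on an object $\F$ is obtained by evaluating $\Gamma(U, -)$ on any CE-fibrant model of $\F$. Since $\mbb{H}_{\X}(\F)$ is such a model, we conclude $\rdf \Gamma(U, \F) = \Gamma(U, \mbb{H}_{\X}(\F))$, as claimed.

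There is essentially no obstacle here; the work has already been done in establishing Theorem \ref{CartanEilenbergSheaves} and Proposition \ref{GodementFibrante}. The only mild point to keep in mind is that $\Gamma(U, -)$ does \emph{not} in general take local equivalences to $\mrm{E}$, which is precisely why we must pass through the CE-fibrant replacement $\mbb{H}_{\X}(\F)$; once we do, compatibility of $(\D, \mrm{E})$ with $\X$ guarantees both the existence of the model and the fact that $\Gamma(U, -)$ respects the strong structure, which is all Proposition \ref{existenciaDerivado} requires.
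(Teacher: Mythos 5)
Your proposal is correct and follows exactly the paper's own route: invoke Theorem \ref{CartanEilenbergSheaves} to get the CE-structure with $(\mbb{H}_{\X},\rho)$ as resolvent functor, observe that $\Gamma(U,-)$ sends global equivalences into $\mathrm{E}$ by definition, and conclude via Proposition \ref{existenciaDerivado}. The paper literally says ``the same proof works'' as for Corollary \ref{existenciaderivadoimagenesdirectas}, which is the argument you reproduce.
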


\subsubsection{} When $\X$ has a terminal object $X$, e.g. in case $\X$ is the site associated with a topological space $X$,
{\it sheaf cohomology} is by definition the right derived functor of the global sections functor $\Gamma (X, -) : \Sh{\X}{\D}
\longrightarrow \D$. So, under the above assumptions,
sheaf cohomology is well defined and agrees with $\Gamma (X,\mbb{H}_{\X} (\F))$.

Following \cite[4.3.6.1]{SGA4}, if the coefficient category $\D$ has limits, the notion of global sections functor
$\Gamma (\X, -) : \Sh{\X}{\D} \longrightarrow \D$ generalizes to a
general site $\X$, possibly without a terminal object, as:
$$
\Gamma(\X,\F) = \invlim_{U\in\X}\, \F(U) \ .
$$
Note that in this case $\Gamma(\X,-)$ does not necessarily send a global equivalence to a weak equivalence of $\D$.
But, being $({\D},\mrm{E})$ a descent category in which arbitrary products are $\mrm{E}$-exact, the right derived functor
of $\invlim_{\X}:\mc{D}^{\X}\longrightarrow \mc{D}$ exists, and is given by the composition of the simple functor with the cosimplicial replacement
$\mc{D}^{\X}\longrightarrow \Dl\mc{D}$ (see \cite{Rod2}). The resulting functor $\holim_{\X} : \Sh{\X}{\D} \longrightarrow \D $ sends global equivalences to weak ones; hence, it admits a right derived functor $\Sh{\X}{\D}[\mc{W}^{-1}] \longrightarrow \D[\mrm{E}^{-1}]$ that
may be seen to agree with the right derived functor of $\Gamma(\X,-)$. That is,
$\rdf \Gamma (\X, -) : \Sh{\X}{\D}[\mc{W}^{-1}] \longrightarrow \D[\mrm{E}^{-1}]$ exists and is given by
$$
\rdf \Gamma (\X, \F) =  \holim_{U\in\X}  \mbb{H}_{\X} (\F)(U) \ .
$$

\subsubsection{} Recall that when there is a sheafification functor then $\Sh{\X}{\D}$ is complete (resp. cocomplete)
when $\D$ is. A homotopical version of this fact is that when $\mbb{H}_{\X}$ is a `homotopical'
sheafification functor (that is, when $(\D,\mrm{E})$ is compatible with the site $\X$) then
$(\Sh{\X}{\D},\mc{W})$ is homotopically complete, and homotopically cocomplete provided $(\D,\mrm{E})$ is.

The key points to seeing this are that the resolvent functor $(\mbb{H}_{\X},\rho)$ is also a resolvent functor for presheaves, and that it may be lifted to diagram categories: for each small category $I$, $(\mbb{H}_{\X},\rho)$ induces objectwise a resolvent functor on $(\PrSh{\X}{\D}^I =\PrSh{\X}{\D^I},\mc{S},\mc{W})$.
This in turn implies that there is an adjunction natural in $I$
$$ \xymatrix@C=35pt@H=4pt{
\PrSh{\X}{\D}^I[\Scal^{-1}] \ar@<0.7ex>[r]^-{\mbf{id}} & \PrSh{\X}{\D}^I[\W^{-1}] \simeq \Sh{\X}{\D}^I[\W^{-1}]
\ar@<0.7ex>[l]^-{\mbb{H}_{\X}} }$$
where the right adjoint $\mbb{H}_{\X}$ is fully faithful. This natural adjunction then transfers homotopy limits
and colimits existing for $(\PrSh{\X}{\D},{\Scal})=(\D^{\X},\mrm{E}^{\X})$ to $\loc{\Sh{\X}{\D}}{\W}$. In particular $(\Sh{\X}{\D},\mc{W})$ is homotopically complete and
$$ \holim_I^{(\Sh{\X}{\D},\mc{W})} = \holim_I^{(\Sh{\X}{\D},\mc{S})} \, \mbb{H}_{\X} \ .$$

\section{Examples}

In this section we show how the above results apply to classic and not so classic examples of categories of sheaves.
More concretely, we will prove that a finite cohomological dimension assumption on the site $\X$ guarantees
its compatibility with the natural descent structures seen on categories of coefficients $\D$ such as complexes, simplicial sets and
spectra. Consequently, from the results of the previous section we conclude that for such $\X$ and  $\D$ we have:

\begin{itemize}\label{PropertiesCompat}
\item[$\bullet$] For every sheaf $\F$, the natural arrow $\rho_\F : \F \longrightarrow \mathbb{H}_\X(\F)$ is a fibrant model of $\F$.
Or, what amounts to the same, $(\Sh{\X}{\D},\Scal,\W)$ is a CE-category with resolvent functor $(\mbb{H}_{\X},\rho)$.
\item[$\bullet$] The localized category $\loc{\Sh{\X}{\D}}{\W}$ is naturally equivalent to $\loc{\Sh{\X}{\D}_{\mrm{fib}}}{\Scal}$.
\item[$\bullet$] The CE-fibrant objects of $\Sh{\X}{\D}$ are precisely those sheaves satisfying Thomason's descent.
\item[$\bullet$] Derived sections $\mbb{R}\Gamma(U,-)$ and derived direct image functor $\mbb{R}f_*$ may be computed by precomposing with
$\mbb{H}_{\X}$.
\item[$\bullet$] The hypercohomology sheaf $\mbb{H}_{\X}$ is a `homotopical' sheafification functor that gives an equivalence
$\loc{\Sh{\X}{\D}}{\W}\simeq \loc{\PrSh{\X}{\D}}{\W}$.
\end{itemize}

\subsection{Bounded complexes of sheaves}\label{example2bounded}

\subsubsection{} Consider the descent category structure on the category of uniformly bounded cochain complexes $\Cochainsp{\mc{A}}$ described in
example \ref{example1bounded}.

In this case the simple functor is
$\mbf{s}=\Tot^{\prod} = \Tot^\oplus : \Dl \Cochainsp{\mc{A}}\longrightarrow \Cochainsp{\mc{A}}$
by the boundedness assumption. The category of sheaves of uniformly bounded cochain complexes
$\Sh{\X}{ \Cochainsp{\mc{A}} }$ is a descent category where the weak equivalences are the global equivalences and the simple
functor is the total-sum functor applied objectwise: $(\Tot \F ) (U) = \Tot (\F(U))$.

It follows that $\mbf{s}$ commutes in this case with all colimits, since
it is defined degree-wise through a finite direct sum. Hence, $\mbf{s}$ commutes trivially with stalks.
Therefore, we deduce from Theorem \ref{CartanEilenbergSheaves}

\begin{theorem}\label{CEboundedcomplexes} Assume that $\A$  is an abelian category satisfying $($AB4$)^{\ast}$ and $($AB5$)$ (that is, arbitrary products and
filtered colimits exist and are exact). Then, the descent category $\Cochainsp{\A}$ is compatible with any site $\X$.
In particular, properties  \emph{\ref{PropertiesCompat}} hold for $(\Sh{\X}{\Cochainsp{\A}},\Scal,\W)$.
\end{theorem}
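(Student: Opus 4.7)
The strategy is to reduce everything to an application of Theorem \ref{CartanEilenbergSheaves}, specifically the implication (3) $\Rightarrow$ (1). So I need to check the hypotheses (\ref{hipotesis}) and then verify that the simple functor commutes weakly with stalks.

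First I would verify the background hypotheses (G1) and (G2) for $\D = \Cochainsp{\A}$ (the site assumption (G0) being built into the compatibility question). Since $\A$ satisfies (AB4)* and (AB5), arbitrary products and filtered colimits exist in $\A$, and the same holds for $\Cochainsp{\A}$ by computing them degree-wise. For (G2), the class $\mathrm{E}$ consists of quasi-isomorphisms. Axiom (AB5) says filtered colimits in $\A$ are exact, hence they commute with taking cohomology, so a filtered colimit of quasi-isomorphisms is a quasi-isomorphism. Dually, axiom (AB4)* says arbitrary products are exact, which again yields commutation with cohomology and hence stability of quasi-isomorphisms under arbitrary products.

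The heart of the argument is verifying condition (3) of Theorem \ref{CartanEilenbergSheaves}, i.e.\ that the natural comparison morphism
$$
\theta_{G^\bullet \F} \colon p^{\ast}\simple G^\bullet(\F) \longrightarrow \simple p^{\ast} G^\bullet(\F)
$$
lies in $\mathrm{E}^X$ for every sheaf $\F$. Here I would use the crucial feature of the uniformly bounded setting: by Example \ref{example1bounded}, in degree $n$ one has $\simple(A)^n = \bigoplus_{p+q=n} A^{p,q}$, and the uniform bound $A^n = 0$ for $n<b$ makes this a \emph{finite} direct sum in each degree. Finite direct sums commute with filtered colimits in any abelian category, so the degree-wise formula shows that $\simple$ itself commutes with filtered colimits. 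Since the stalk functor $p^{\ast}$ is computed by the filtered colimits (\ref{fibra}), this means $\theta_{G^\bullet \F}$ is in fact an isomorphism, and a fortiori in $\mathrm{E}$ (the class $\mathrm{E}$ being saturated, hence containing all isomorphisms).

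With condition (3) established, Theorem \ref{CartanEilenbergSheaves} yields condition (1): $(\Sh{\X}{\Cochainsp{\A}},\Scal,\W)$ is a right Cartan--Eilenberg category with CE-fibrant model $\rho_\F : \F \to \mbb{H}_\X(\F)$. The remaining items in the list of properties \ref{PropertiesCompat} then follow formally: the equivalence $\loc{\Sh{\X}{\D}}{\W} \simeq \loc{\Sh{\X}{\D}_{\mathrm{fib}}}{\Scal}$ from the resolvent functor formalism of Section \ref{CEsection}, the characterization of CE-fibrants as Thomason-descent sheaves from Corollary \ref{Thomasondescentproperty}, the derivability of sections and direct images from Corollaries \ref{existenciaderivadosecciones} and \ref{existenciaderivadoimagenesdirectas}, and the presheaf--sheaf equivalence from Corollary \ref{hacesiprehaceslomismoson}. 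No step presents a genuine obstacle; the only point worth being careful about is invoking the uniform lower bound to guarantee that each degree of $\simple$ is a \emph{finite} coproduct, which is precisely what lets $\simple$ commute with the filtered colimits defining stalks.
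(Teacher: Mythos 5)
Your proposal is correct and follows essentially the same route as the paper: the uniform bound makes each degree of $\simple = \Tot$ a finite direct sum, so $\simple$ commutes with all (in particular filtered) colimits and hence with stalks, which is condition (3) of Theorem \ref{CartanEilenbergSheaves}. Your explicit verification of (G1)--(G2) via (AB4)$^*$ and (AB5) is a welcome detail the paper leaves implicit, but the core argument is identical.
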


In this case a local equivalence $f\in\mc{W}$ is just a quasi-isomorphism of
$\Sh{\X}{\Cochainsp{\A}}=\Cochainsp{\Sh{\X}{\A}}$. On the other hand, a global equivalence $f\in\mc{S}$ is
a morphism $f:\F\longrightarrow\G$ of complexes of sheaves such that $f(U)$ is a quasi-isomorphism of $\Cochainsp{\A}$
for each object $U\in\X$.

Consequently, a functor $\mrm{F}:\Sh{\X}{\Cochainsp{\A}}\longrightarrow
\mc{C}$ sending global equivalences to isomorphisms admits a right derived functor
$\rdf \mrm{F}:\mc{D}^{\geq b}(\Sh{\X}{\A})=\Sh{\X}{\Cochainsp{\A}}[\mc{W}^{-1}]\longrightarrow \mc{C} $ given by
$\rdf \mrm{F}(\F)= \mrm{F}(\mbb{H}_{\X} (\F))$. Note that this derivability criterion does not assume the existence of enough injectives in $\A$.
Particularly, for the case $\A = R-$modules, we recover the classic construction of abelian sheaf hypercohomology
and derived direct image of sheaves constructed through canonical Godement resolutions by flasque sheaves.

\subsection{Unbounded complexes of sheaves}\label{example2unbounded}

\subsubsection{} When the boundedness assumption on complexes of sheaves is dropped, Theorem \ref{CEboundedcomplexes} is not longer true for a general site $\X$, even in the case $\A=R-$modules.

Consider the category $\Cochains{R}$ of unbounded cochain complexes of $R$-modules with the descent
structure of example \ref{example1unbounded}. In this case, the simple functor
$\mbf{s}=\Tot^{\prod} : \Dl \Cochains{R}\longrightarrow \Cochains{R}$ is an infinite product degree-wise,
and consequently it does not commute (even weakly) with filtered colimits. This in turn means that the hypercohomology sheaf $\mbb{H}_{\X} (\F)$ associated with an unbounded
complex $\F$ of sheaves of $R$-modules does not necessarily produce a CE-fibrant model for $\F$ in
$(\Sh{\X}{\Cochains{R}},\mc{S},\mc{W})$, for a general site $\X$.

\begin{example}
To illustrate this fact, consider a family $\{\F^{-k}\}_k$ of abelian sheaves for
which $( \prod_{k>0} \mrm{H}^{k}(-,\F^{-k}) )_x \neq 0$ (for instance those described in
[We, A.5] or [MV, 1.30]). Then construct the complex of sheaves $\F$ with zero differential that is 0 in positive degrees and
equal to $\F^{-k}$ in negative degrees. It is not hard to verify that $\rho_F : F \longrightarrow \mbb{H}_{\X} (\F)$ is not a
quasi-isomorphism in this case, so it does not provide a CE-fibrant model for $\F$.

We remark however that
$(\Sh{\X}{\Cochains{R}},\mc{S},\mc{W})$ is still a Cartan-Eilenberg category for any site $\X$: K-injective complexes of sheaves
are easily seen to be CE-fibrant, and by \cite{Sp} each complex of sheaves is locally equivalent to some
$K$-injective one (see also \cite{We}, appendix). Hence the CE-fibrant model of an unbounded
complex $\F$ of sheaves does not agree in general with its hypercohomology sheaf $\mbb{H}_{\X} (\F)$, unless some extra
assumption is imposed on site $\X$.
\end{example}

\subsubsection{} We are going to show that \emph{finite cohomological dimension} is a sufficient condition for the site $\X$ in order that the hypercohomology sheaf $\mbb{H}_{\X}$ produces a resolvent functor for the Cartan-Eilenberg category $(\Sh{\X}{\Cochains{R}},\mc{S},\mc{W})$.

Recall that a \emph{system of neighbourhoods} for a point $x\in \X$ is, by definition, a full cofinal subcategory of
the category of neighbourhoods of $x$ in $\X$ (\cite{SGA4} 6.8.2).

\begin{definition}[\cite{GS}]
A site $\X$ is said to have \textit{finite cohomological dimension} if for any point $x\in\X$ there exists $d\geq 0$ and a system $\Lambda$ of neighbourhoods of $x$ such that for any sheaf of abelian groups $\F\in\Sh{\X}{\Ab}$ and any neighbourhood $U\in\Lambda$ it holds that $\mrm{H}^n(U;\F)=0$ whenever $n > d$.
\end{definition}

For instance, the following sites have finite cohomological dimension:

\begin{enumerate}
\item The small Zariski site of a noetherian topological space of finite Krull dimension; e.g., the Zariski site of a noetherian scheme of finite Krull dimension. This follows from Grothendieck's vanishing Theorem (\cite{Har} III, Theorem 2.7).
\item The big Zariski site of a noetherian scheme $X$ of finite Krull dimension consisting of all schemes of finite type over $X$, or all noetherian schemes of bounded Krull dimension (\cite{GS}, page 6).
\item The small site of a topological manifold of finite dimension. This follows from the vanishing Theorem of \cite{KS}.
\end{enumerate}

\begin{theorem}\label{CEunboundedcomplexes} The descent category $\Cochains{R}$ is
compatible with any  finite cohomological dimension site $\X$.
In this case, properties  \emph{\ref{PropertiesCompat}} hold for $(\Sh{\X}{\Cochains{R}},\Scal,\W)$.
\end{theorem}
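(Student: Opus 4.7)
The plan is to verify condition (2) of Theorem \ref{CartanEilenbergSheaves}: that for every sheaf $\F\in\Sh{\X}{\Cochains{R}}$ and every point $x\in X$, the natural map $\F_x\to(\mbb{H}_\X\F)_x$ induced by $\rho_\F$ is a quasi-isomorphism. Fix $x$ and, by finite cohomological dimension, a system $\Lambda$ of neighbourhoods of $x$ such that $H^p(U,\G)=0$ whenever $p>d$, $U\in\Lambda$, and $\G$ is a sheaf of abelian groups. For $U\in\Lambda$ consider the double complex $X^{p,q}(U)=G^p(\F^q)(U)$ with $p\geq 0$, $q\in\mathbb{Z}$, whose product total complex is $\mbb{H}_\X(\F)(U)$.

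The functor $T=p_\ast p^\ast$ is exact on $R$-modules, since stalks are filtered colimits (exact by AB5) and $p_\ast$ is a product (exact by AB4$^{\ast}$); hence each $G^p=T^{p+1}$ is exact and commutes with cohomology. It follows that the spectral sequence attached to the decreasing column filtration $F^p\Tot^\prod X(U)=\prod_{p'\geq p,\,q}X^{p',q}(U)$ has
\[ E_1^{p,q}=G^p(H^q\F)(U),\qquad E_2^{p,q}=H^p(U,H^q\F), \]
the second identification because $G^\bullet(-)$ computes sheaf cohomology. By the choice of $\Lambda$, $E_2^{p,q}=0$ whenever $p>d$. The column filtration on $\Tot^\prod$ is complete and Hausdorff ($\bigcap_p F^p=0$ and $\Tot^\prod\cong\invlim_p\Tot^\prod/F^p$); combined with this horizontal vanishing, the spectral sequence is regular and converges strongly to $H^{p+q}(\mbb{H}_\X\F(U))$, endowing each cohomology module with a finite filtration of width at most $d+1$.

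Filtered colimits over $\Lambda^{\op}$ are exact in $R$-modules and therefore commute with cohomology, filtrations, and the whole spectral sequence. At $E_2$, $\dirlim_U H^p(U,H^q\F)$ equals $(H^q\F)_x=H^q(\F_x)$ for $p=0$ and vanishes for $p>0$: by Lemma \ref{DegeneracionExtraGodement} the coaugmentation $\G_x\to G^\bullet(\G)_x$ has an extra degeneracy for any sheaf $\G$, so the associated complex is acyclic in positive degrees, and by exactness of $\dirlim_U$ one may pull the colimit inside the cohomology of the Godement complex. The colimited spectral sequence therefore collapses at $E_2$ onto the single column $p=0$, and
\[ H^n\bigl((\mbb{H}_\X\F)_x\bigr)=\dirlim_U H^n\bigl(\mbb{H}_\X\F(U)\bigr)=H^n(\F_x); \]
naturality of the edge map identifies this isomorphism with the map induced by $\rho_\F$ on stalks, so condition (2) is verified.

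The principal hurdle is strong convergence of the column spectral sequence for unbounded $\F$: in general, the second spectral sequence of a doubly unbounded $\Tot^\prod$ need not converge, which is exactly what prevents the argument of Example \ref{example2bounded} from extending verbatim, and the same issue threatens the exchange of $\dirlim_U$ with the abutment. Finite cohomological dimension repairs both defects at once by confining $E_2$ to the horizontal strip $0\leq p\leq d$, which renders the filtration on each cohomology module finite. With condition (2) established, Theorem \ref{CartanEilenbergSheaves} yields the compatibility of $\Cochains{R}$ with $\X$, and the remaining properties listed under \ref{PropertiesCompat} follow formally from Corollaries \ref{existenciaderivadoimagenesdirectas}, \ref{existenciaderivadosecciones} and \ref{hacesiprehaceslomismoson}.
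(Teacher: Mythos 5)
Your argument is correct and follows essentially the same route as the paper's: the column-filtration spectral sequence of $\Tot^{\prod}G^\bullet\F(U)$, the identification $E_2^{pq}(U)=H^p(U,\Hcal^q\F)$ via exactness of $T$, and the uniform vanishing for $p>d$ supplied by finite cohomological dimension. The only cosmetic differences are that you verify condition (2) of Theorem \ref{CartanEilenbergSheaves} rather than the equivalent condition (3), and you carry out the exchange of $\dirlim_U$ with the abutment by hand (using the extra degeneracy of the Godement coaugmentation on stalks to collapse the colimited $E_2$-page), whereas the paper delegates that exchange to the Colimit Lemma (Proposition \ref{colimitlemma}).
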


The proof is based on a spectral sequence argument, the \emph{Colimit Lemma}, for which we need some preliminaries. The same
spectral sequence argument will also be used in the examples of simplicial sets and spectra.

\subsubsection{}\label{ss} Let $\C$ be a category with filtered colimits and $I$ a filtered indexing set.
For us \lq\lq spectral sequence" means a functorial right half-plane cohomological spectral sequence $E_*$ of abelian groups, commuting with filtered colimits: $E_* (\dirlim_i X_i ) = \dirlim_i E_*(X_i) $.

For an object $X \in \C$, we say that the spectral sequence $E_*(X)$ is {\it bounded on the right\/} if there exists $d$ such that $E_2^{p*}(X) = 0$ for $p>d$. Note that, for conditionally convergent spectral sequences, this implies strong convergence (\cite{Boa}, Theorem 7.4). Given a filtered system $\left\{ X_i\right\}_{i\in I}$ of objects of $\C$, we say that the family of spectral sequences
$\left\{ E_*(X) \right\}_{i\in I}$ is {\it uniformly\/} bounded on the right if there is a fixed $d$ that works for all $i\in I$.

\begin{proposition}[Colimit Lemma]\label{colimitlemma} Assume as given the following data:

\begin{enumerate}
\item[\emph{(1)}] An object $X\in \C$ and a filtered system $X_\bullet = \left\{ X_i\right\}_{i\in I}$ of objects $ X_i \in \C$.
\item[\emph{(2)}] A cone $\left\{ f_i : X_i \longrightarrow X \right\}_{i\in I}$ from the base $X_\bullet$ to the vertex $X$ and, hence, an induced map $f: \dirlim_i X_i \longrightarrow X$.
\end{enumerate}

Moreover, assume also that:

\begin{enumerate}
\item[\emph{(1)}] The spectral sequences $\left\{ E_*X_i\right\}_{i\in I}$ and $E_*X$ converge conditionally to $\left\{ H_i \right\}_{i\in I}$ and $H$, respectively.
\item[\emph{(2)}] The spectral sequences $\left\{ E_*X_i\right\}_{i\in I}$ are uniformly bounded on the right.
\item[\emph{(3)}] The map $E_r(f) : \dirlim_i E_r(X_i) \longrightarrow E_r(X)$ is an isomorphism for some $r \geq 0$.
\end{enumerate}

Then the map $H(f) : \dirlim_i H_i \longrightarrow H$ is an isomorphism too.
\end{proposition}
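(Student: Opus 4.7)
The plan is a standard comparison-of-spectral-sequences argument, carried out page by page, exploiting that filtered colimits commute with the spectral sequence functor by assumption.

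First, I would propagate the isomorphism $E_r(f) : \dirlim_i E_r(X_i) \longrightarrow E_r(X)$ up the spectral sequence. Since $E_*$ commutes with filtered colimits and the differentials are natural, the differential $d_r$ on $\dirlim_i E_r(X_i)$ is identified under $E_r(f)$ with the differential $d_r$ on $E_r(X)$. Taking cohomology (which commutes with filtered colimits) gives that $E_{r+1}(f)$ is an isomorphism. By induction, $E_s(f) : \dirlim_i E_s(X_i) \longrightarrow E_s(X)$ is an isomorphism for every $s \geq r$.

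Next, I would pass to $E_\infty$. The uniform bound $E_2^{p,*}(X_i) = 0$ for $p > d$, independent of $i$, propagates to all later pages, hence $E_s^{p,*}(X_i) = 0$ for $p > d$ and all $s \geq 2$, uniformly in $i$. For a fixed bidegree $(p,q)$, the differentials landing in or leaving position $(p,q)$ originate from or land in a bidegree with first coordinate $> d$ as soon as $s > d$; therefore $E_s^{p,q}(X_i) = E_\infty^{p,q}(X_i)$ for $s$ larger than some $s_0(p,q,d)$ which does \emph{not} depend on $i$. Since the same bound applies to $E_s^{p,*}(X) = \dirlim_i E_s^{p,*}(X_i)$, we also have $E_s^{p,q}(X) = E_\infty^{p,q}(X)$ for $s > s_0(p,q,d)$. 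Combining with the previous step, $E_\infty^{p,q}(f) : \dirlim_i E_\infty^{p,q}(X_i) \longrightarrow E_\infty^{p,q}(X)$ is an isomorphism for all $(p,q)$.

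Finally I would conclude by strong convergence. Conditional convergence together with the uniform right-boundedness implies strong convergence by Boardman's criterion \cite[Thm.~7.4]{Boa}; in fact, uniform vanishing for $p > d$ makes each filtration $\{F^p H_i\}_p$ finite (of length at most $d$), and so is $\{F^p H\}_p$. The maps $f_i$ are filtration-preserving, so the induced map $f_* : \dirlim_i H_i \longrightarrow H$ respects the filtrations, and, filtered colimits being exact, the associated graded of $\dirlim_i H_i$ is $\dirlim_i E_\infty^{p,*}(X_i)$. By the previous step this coincides with $E_\infty^{p,*}(X) = \mathrm{gr}^p H$. A five-lemma induction on the finite filtration then yields that $f_* : \dirlim_i H_i \longrightarrow H$ is an isomorphism.

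The main obstacle I anticipate is the passage from the $E_\infty$ isomorphism to the isomorphism on the abutments: one must handle the filtrations carefully and invoke strong (rather than merely conditional) convergence. This is precisely where the uniform right-boundedness is essential, since without it the comparison could fail through a $\mathrm{lim}^1$ obstruction; the bounded case instead reduces everything to a finite induction on filtration length, which is routine.
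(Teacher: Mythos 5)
Your proof is correct and is essentially the standard comparison argument that the paper simply delegates to the cited reference (Mitchell, Proposition 3.3): propagate the page-$r$ isomorphism using exactness of filtered colimits, use the uniform right-bound to get degeneration at a page independent of $i$ (hence an isomorphism on $E_\infty$ and strong convergence via Boardman), and finish with a five-lemma induction on the resulting finite filtrations. No gaps worth flagging.
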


\begin{proof} See \cite{Mit}, Proposition 3.3.
\end{proof}

\begin{proof}[{\bf Proof of Theorem \ref{CEunboundedcomplexes}}]

The first filtration  of a double complex $K \in \dCochains{R}$,\newline $F^p (\Tot^{\prod} K)^n =
\prod_{s\geq p} K^{s, n-s}$ gives us a
conditionally convergent spectral sequence
$$
E_2^{pq}(\Tot^{\prod} K) = H^p_hH^q_v (K) \quad \Longrightarrow \quad H^{p+q}(\Tot^{\prod} K) \ , \quad p\geq 0 \ .
$$
By Theorem \ref{CartanEilenbergSheaves}, to prove that
for any sheaf $\F \in \Sh{\X}{\Cochains{R}}$ it holds that $\rho_{\F}:\F\longrightarrow \mbb{H}_{\X} (\F)$ is a CE-fibrant model
we may equivalently show that the canonical morphism
$$
\theta_{\F}(x) : \dirlim_{(U,u)\in\mbf{Nbh}(x)} \Tot^{\prod} (G^*\F)(U) \longrightarrow \Tot^{\prod} \dirlim_{(U,u)\in\mbf{Nbh}(x)} (G^*\F)
$$
is a \quis\ of $\Cochains{R}$ for any sheaf $\F$ and any point $x$ in the set of enough points $X$.
These colimits may be computed using the neighbourhoods $(U,u)$ in the
system of neighbourhoods $\Lambda$ that exists by assumption.

Therefore, we have an object
$\Tot^{\prod} x^{\ast}(G^*\F) \in \Cochains{R}$, a filtered system
$\left\{ \Tot^{\prod} (G^*\F)(U)\right\}_{(U,u)}$, where $(U,u)$ runs over all
neighbourhoods of $x$ in $\Lambda$ and the induced map $\theta_{\F}(x)$.

Let us verify the hypotheses of the Colimit Lemma: the spectral sequences
$$
E_2^{pq}(U) = H^p_vH^q_h ((G^*\F)(U)) \quad \Longrightarrow \quad H^{p+q} (\Tot^{\prod} (G^*\F)(U)) \ , \quad p\geq 0
$$
and
$$
E_2^{pq}(x) = H^p_vH^q_h ((G^*\F)_x) \quad \Longrightarrow \quad H^{p+q} (\Tot^{\prod} ((G^*\F)_x)) \ , \quad p\geq 0
$$
converge conditionally.

To compute $E_2^{pq} (U)$ we use that $T:\Sh{\X}{\Cochains{R}}\longrightarrow \Sh{\X}{\Cochains{R}}$ commutes with cohomology in
$\Sh{\X}{\Cochains{R}}$.
At the presheaf level, clearly $H^{\ast} (T (F)) = T(H^{\ast}(F))$ for any presheaf $F$, because cohomology in
$\Cochains{R}$ commutes with products and filtered colimits. Since the stalks of a presheaf $G$ are isomorphic to
the ones of its associated sheaf $G^a$, then $T(G)=T(G^a)$. Hence, if $\F$ is a sheaf
$$T(\Hcal^\ast \F) = T((H^{\ast}\F)^a)= T(H^{\ast}\F)= H^{\ast} (T \F) \ .$$
In particular $H^{\ast} (T \F)= T(\Hcal^\ast \F)$ is a sheaf, so it agrees with its associated sheaf.
Therefore
$\Hcal^{\ast} (T \F)= H^{\ast} (T \F)= T(\Hcal^\ast \F)$, and  $\Hcal^\ast( G^\bullet \F) = G^\bullet (\Hcal^{\ast}\F)$.
We then have, for all $p > d$,
$$
E_2^{pq} (U) = H^p_vH^q_h (G^*\F)(U)
             = H^p (\Gamma (U, \Hcal^q G^*\F) )
             = H^p (\Gamma (U, G^*\Hcal^q\F))
             = H^p (U, \Hcal^q\F)
             = 0
$$
because of the finite cohomological dimension assumption. Finally, already for $r=0$, we have an isomorphism
$$
\dirlim E_0^{pq}(U)  = \dirlim G^p\F^q (U)
                     = (G^p\F^q)_x
                     = E_0^{pq}(x) \ .
$$
Hence the Colimit Lemma tells us that
$$
H^n (\Tot^{\prod} (G^*\F))_x  \longrightarrow H^n \Tot^{\prod} ((G^*\F)_x)
$$
is an isomorphism for all $n$.
\end{proof}

\subsection{Sheaves of fibrant simplicial sets}

\subsubsection{} Let $\D =\Sset_f $ with the descent structure of \ref{example1simplicial}. As in the case of unbounded complexes, the simple functor may not commute weakly with stalks.
Again, for this to hold we must either restrict to simplicial sets with vanishing higher homotopy groups, or
impose some finiteness assumption on the site $\X$. Here we study the second alternative, showing that
$\rho_{\F}:\F\longrightarrow \mbb{H}_{\X} (\F)$ is a CE-fibrant model for each
$\F$ in $(\Sh{\X}{\Sset_f},\mc{W},\mc{S})$ if and only if $\X$ is a site of finite type in the sense of \cite{MV}.

\subsubsection{} By a theorem of Joyal, the category $\Sh{\X}{\Sset}$ possesses a simplicial model category structure in which all objects are cofibrant and the weak
equivalences are the local equivalences \cite{Ja}. The fibrant objects in this
model structure are then defined through a lifting property, and they are objectwise fibrant simplicial sets.
Therefore, there is a fibrant replacement functor $Ex$ that takes a simplicial sheaf to a fibrant one, in
particular $Ex(\F)\in\Sh{\X}{\Sset_f}$.

Given a simplicial sheaf $\F\in \Sh{\X}{\Sset}$ and $n\geq 0$, let $\widetilde{P}^{(n)} \F$ be the simplicial sheaf associated to the presheaf
$U\mapsto P^{(n)} \F(U) = \mrm{Im}\{\F(U)\longrightarrow \mrm{cosk}_n \F(U)\}$. It is equipped with natural maps
$$\F\longrightarrow \widetilde{P}^{(n)} \F\qquad \mbox{ and }\qquad \ \widetilde{P}^{(n+1)} \F \longrightarrow \widetilde{P}^{(n)} \F \ .$$
If the stalks of $\F$ are fibrant simplicial sets, the tower $\{x^{\ast}\widetilde{P}^{(n)}\F={P}^{(n)}x^{\ast} \F\}$
is precisely the Moore-Postnikov tower of $x^{\ast}\F$.
In this case the natural map
$x^{\ast}\F \simeq \invlim_{n\geq 0}\,x^{\ast} \widetilde{P}^{(n)}\F \longrightarrow \holim_{n\geq 0} \,x^{\ast} \widetilde{P}^{(n)}\F$
is a weak equivalence.

\begin{definition}\cite{MV} A site $\X$ is of \emph{finite type} if for each simplicial sheaf $\F\in \Sh{\X}{\Sset}$, the natural morphism
$$ \F \longrightarrow \holim_{n\geq 0} 	\, Ex (\widetilde{P}^{(n)} \F) $$
is a local equivalence.
\end{definition}

\begin{theorem} The descent category $\Sset_f$ is compatible with site $\X$ if and only if $\X$ is of finite type. In this case, properties  \emph{\ref{PropertiesCompat}} hold for $(\Sh{\X}{\Sset_f},\Scal,\W)$.
\end{theorem}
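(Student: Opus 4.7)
The plan is to reduce the claim to Theorem \ref{CartanEilenbergSheaves}: the descent category $\Sset_f$ is compatible with $\X$ if and only if the natural map $\rho_\F : \F \to \mathbb{H}_\X \F = \holim G^\bullet \F$ is a local equivalence for every simplicial sheaf $\F$. Since $\holim x^* G^\bullet \F \simeq x^*\F$ by the extra-degeneracy argument (Lemma \ref{DegeneracionExtraGodement} plus Lemma \ref{extraDeg}), this amounts to showing that for every point $x$ the canonical comparison $\theta_{G^\bullet \F} : x^*\holim G^\bullet \F \to \holim x^* G^\bullet \F$ is a stable weak equivalence.

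The key auxiliary step, to be established first, is that $\rho_\G$ is already a local equivalence whenever $\G$ is stalkwise $n$-truncated for some fixed $n$. For such $\G$, the Bousfield–Kan spectral sequence of the cosimplicial fibrant simplicial sheaf $G^\bullet \G$ has $E_2^{p,q} = \pi^p(\pi_q G^\bullet \G)$ concentrated in the horizontal strip $0 \le q \le n$ (because $\pi_q$ commutes with filtered colimits and with the Godement functor $T$, so the stalks of $\pi_q G^\bullet \G$ vanish for $q>n$). This uniform boundedness lets one apply the Colimit Lemma (Proposition \ref{colimitlemma}) exactly as in the proof of Theorem \ref{CEunboundedcomplexes}: the filtered colimit of $\holim G^\bullet \G$ over the system of neighbourhoods of $x$ is compared termwise with the BK spectral sequence of the stalk $\G_x$, giving the desired local equivalence.

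For the implication ($\Leftarrow$), assume $\X$ is of finite type. Each $Ex\,\widetilde{P}^{(n)}\F$ is stalkwise $n$-truncated, so the auxiliary step yields $Ex\,\widetilde{P}^{(n)}\F \xrightarrow{\sim_\W} \mathbb{H}_\X Ex\,\widetilde{P}^{(n)}\F$ for each $n$. Because the simple functor preserves limits (Remark \ref{LambdaMuFlechas}), $\mathbb{H}_\X$ commutes with the inverse tower, and we obtain a commutative square
\[
\xymatrix{
\F \ar[r]^-{\rho_\F} \ar[d] & \mathbb{H}_\X \F \ar[d] \\
\holim_n Ex\,\widetilde{P}^{(n)}\F \ar[r]^-{\sim_\W} & \holim_n \mathbb{H}_\X Ex\,\widetilde{P}^{(n)}\F
}
\]
whose left vertical arrow is in $\W$ by the finite-type hypothesis and whose bottom arrow is a local equivalence by the truncated case; hence $\rho_\F \in \W$. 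For the converse ($\Rightarrow$), assume compatibility. Applying $\mathbb{H}_\X$ to the Postnikov approximation $\F \to \holim_n Ex\,\widetilde{P}^{(n)}\F$, both endpoints become locally equivalent to $\holim_n \mathbb{H}_\X Ex\,\widetilde{P}^{(n)}\F$ (the source by compatibility, the target by preservation of limits plus the auxiliary step, which does not use any assumption on $\X$). By the two-out-of-three property for $\W$, the Postnikov map itself is a local equivalence, which is precisely the finite-type condition.

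The main obstacle is the auxiliary step, namely controlling the Bousfield–Kan spectral sequence of $G^\bullet \G$ stalkwise for a truncated $\G$. One must check conditional convergence and uniform right-boundedness of the family $\{E_*(U)\}_{(U,u)\in\mathbf{Nbh}(x)}$ so that the Colimit Lemma applies; the boundedness ultimately comes from the vanishing of $\pi_q$ for $q>n$, but one has to be careful with fringed behaviour near the edge of the half-plane, in the spirit of Bousfield–Kan, before transferring the conclusion to $\pi_*\mathrm{Tot}$.
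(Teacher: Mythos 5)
Your reduction to Theorem \ref{CartanEilenbergSheaves} is the right starting point, and your idea of bootstrapping from the truncated case up the Postnikov tower is a reasonable strategy (it is essentially how one would reprove the Morel--Voevodsky input). But as written the argument has two genuine gaps, and the second is the one the paper's own proof is really about.

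First, the auxiliary step. The Bousfield--Kan homotopy spectral sequence of a cosimplicial \emph{space} is fringed, and the Colimit Lemma (Proposition \ref{colimitlemma}) is stated for cohomological spectral sequences of abelian groups that are uniformly bounded \emph{on the right}, i.e.\ $E_2^{p*}=0$ for $p>d$. Truncation of the stalks gives you concentration in the strip $0\le q\le n$, which is a different boundedness condition; one can still extract strong convergence in each total degree from it, but that is not the cited lemma, and the fringe phenomena in $\pi_0$ and $\pi_1$ are not something you can wave at --- you flag them but do not resolve them. The paper avoids this entirely in the direction ``finite type $\Rightarrow$ compatible'' by quoting \cite[1.65]{MV} for locally fibrant simplicial sheaves.

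Second, and more seriously: you justify the commutation $\mbb{H}_{\X}(\holim_n Ex\,\widetilde{P}^{(n)}\F)\simeq \holim_n \mbb{H}_{\X}(Ex\,\widetilde{P}^{(n)}\F)$ by saying the simple functor preserves limits. But $\mbb{H}_{\X}=\simple G^\bullet$, and $G^\bullet$ is built from stalks, i.e.\ filtered colimits, which do \emph{not} commute with the inverse limit (or homotopy inverse limit) of a countable tower. The same problem infects the bottom arrow of your square: a levelwise local equivalence of towers need not have a local equivalence on $\holim_n$, precisely because stalks do not pass through $\holim_n$. This is the crux, and the paper handles it differently: it proves the finite-stage identity $\mbb{H}_{\X}(\widetilde{P}^{(n)}\F)=\widetilde{P}^{(n)}\mbb{H}_{\X}(\F)$ (legitimate, because $\mrm{cosk}_n$ and images are \emph{finite} constructions and hence commute with stalks, products and the simple functor), and then observes that $\{{P}^{(n)}\mbb{H}_{\X}(\F)(U)\}_n$ is the Moore--Postnikov tower of the fibrant simplicial set $\mbb{H}_{\X}(\F)(U)$, so that $\mbb{H}_{\X}(\F)\to\holim_n\widetilde{P}^{(n)}\mbb{H}_{\X}(\F)$ is a \emph{global} equivalence by classical Postnikov convergence sections-wise --- no commutation of $\mbb{H}_{\X}$ with the tower limit is ever needed. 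You need either this device or an explicit $\lim^1$/Milnor-sequence argument exploiting the uniform connectivity of the tower maps; without one of them the square does not close.
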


That is, for the site $\X$, either both of the natural morphisms
$$
\holim_{n\geq 0} Ex (\widetilde{P}^{(n)} \F) \longleftarrow \F \longrightarrow \holim_{p\geq 0} G^p\F
$$
are local equivalences for all sheaves $\F$ simultaneously, or neither one is.

\begin{proof} Assume that $\X$ is a site of finite type. If $\F\in \Sh{\X}{\Sset_f}$, since
fibrant objects are preserved by filtered colimits, it holds that $\F$ is locally fibrant.
Then, \cite[1.65]{MV} ensures that $\rho_{\F}:\F\longrightarrow \mbb{H}_{\X} (\F)$ is a local equivalence,
so $\rho_{\F}:\F\longrightarrow \mbb{H}_{\X} (\F)$ is a CE-fibrant model of $\F$.

Conversely, assume that $\rho_{\F}:\F\longrightarrow \mbb{H}_{\X} (\F)$ is a local equivalence for each
$\F\in \Sh{\X}{\Sset_f}$. We must prove that $\F \longrightarrow \holim_{n\geq 0}\, Ex (\widetilde{P}^{(n)} \F)$
is a weak equivalence for each simplicial sheaf $\F$. As in the proof of \cite[1.37]{MV}, taking a suitable
replacement of the tower $\{\widetilde{P}^{(n)} \F\}$ we can assume that $\F$ is a fibrant sheaf, and in particular
that $\F\in \Sh{\X}{\Sset_f}$.

Secondly, to compute $\holim_{n\geq 0}\, Ex (\widetilde{P}^{(n)} \F)$ we may take any choice of fibrant
replacement $Ex$. It follows from \cite[1.59]{MV} that $\mbb{H}_{\X} (\widetilde{P}^{(n)} \F )$ is a fibrant simplicial sheaf for all $n$, so
we may equivalently show that $\F \longrightarrow \holim_{n\geq 0}\, \mbb{H}_{\X} (\widetilde{P}^{(n)} \F)$
is a weak equivalence.

Again, if $\F$ is a simplicial presheaf then $T(\F)$, defined in the same way, is a sheaf
and agrees with $T(\F^a)$, where $\F^a$ is the sheaf associated to the presheaf $\F$. Then
$\mbb{H}_{\X} (\F)$ is isomorphic to $\mbb{H}_{\X} (\F^a)$. In particular,
$\mbb{H}_{\X} (\widetilde{P}^{(n)} \F) = \mbb{H}_{\X} ({P}^{(n)} \F ) $.

Recall that $P^{(n)} \F(U) = (\mrm{Im}\phi_n)(U)$ where $\phi_n:\F\longrightarrow \mrm{cosk}_n \F$, and
that $\mrm{cosk}_n$ is given degreewise by a finite limit. Then, since the Godement resolution and
the simple functor commute with finite limits and images, we have
$\mbb{H}_{\X} (\widetilde{P}^{(n)} \F) = \mbb{H}_{\X} ({P}^{(n)} \F ) =  {P}^{(n)}\mbb{H}_{\X} (\F) $.
In particular, ${P}^{(n)}\mbb{H}_{\X} (\F)$ is already a sheaf, so
$$
\mbb{H}_{\X} (\widetilde{P}^{(n)} \F) = {P}^{(n)}\mbb{H}_{\X} (\F) = \widetilde{P}^{(n)}\mbb{H}_{\X} (\F) \ .
$$
As $\F(U)$ is a fibrant simplicial set, so is $\mbb{H}_{\X} (\F)(U)$. Consequently
$\{{P}^{(n)}\mbb{H}_{\X} (\F) (U) \}_{n}$ is the Moore-Postnikov tower of $\mbb{H}_{\X} (\F)(U)$ and
$\mbb{H}_{\X} (\F)(U)\longrightarrow \holim_{n\geq 0}\, {P}^{(n)}\mbb{H}_{\X} (\F) (U) $ is a weak equivalence of simplicial sets.
But then
$$\mbb{H}_{\X} (\F)\longrightarrow \holim_{n\geq 0}\, \widetilde{P}^{(n)}\mbb{H}_{\X} (\F)=\holim_{n\geq 0}\, \mbb{H}_{\X} (\widetilde{P}^{(n)} \F)
$$
is a global equivalence. Composing with the local equivalence $\F\longrightarrow \mbb{H}_{\X} (\F)$ we
conclude that $\F\longrightarrow \holim_{n\geq 0}\, \mbb{H}_{\X} (\widetilde{P}^{(n)} \F)$ is a local equivalence as required.
\end{proof}

Finally, let us remark that finite cohomological dimension implies finite type:

\begin{proposition} Every finite cohomological dimension site is of finite type.
\end{proposition}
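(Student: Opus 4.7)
The plan is to deduce the proposition from Theorem \ref{CartanEilenbergSheaves}: if the descent category $\Sset_f$ is compatible with the site $\X$, then by the preceding theorem $\X$ is of finite type. By condition (3) of the main theorem, compatibility amounts to verifying that for every $\F \in \Sh{\X}{\Sset_f}$ and every point $x$ in the conservative family $X$, the canonical comparison map
$$
\theta_{G^{\bullet}\F}(x):\simple G^{\bullet}(\F)_x \longrightarrow \simple G^{\bullet}(\F_x)
$$
is a weak equivalence of pointed Kan complexes. I would prove this by a spectral sequence argument entirely analogous to the proof of Theorem \ref{CEunboundedcomplexes}.

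The main tool is the Bousfield--Kan descent spectral sequence attached to the Godement cosimplicial resolution: for each neighbourhood $(U,u)$ in a system $\Lambda$ witnessing the cohomological dimension bound $d$, one has a conditionally convergent spectral sequence
$$
E_2^{p,q}(U) = H^p(U;\pi_q \F) \Longrightarrow \pi_{q-p}\bigl(\mbb{H}_{\X}\F\bigr)(U),
$$
whose $E_2$-term vanishes for $p > d$ by the finite cohomological dimension hypothesis; thus the family $\{E_{\ast}(U)\}_{(U,u)\in\Lambda}$ is uniformly bounded on the right in the sense of \ref{ss}. On the other hand, Lemma \ref{DegeneracionExtraGodement} asserts that the stalk coaugmentation $p^{\ast}(\eta_{\F}):c(\F_x) \longrightarrow G^{\bullet}(\F_x)$ has an extra degeneracy, so Lemma \ref{extraDeg} identifies $\simple G^{\bullet}(\F_x)$ with $\F_x$ up to weak equivalence; hence the target of $\theta_{G^{\bullet}\F}(x)$ has homotopy groups $\pi_n \F_x$.

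The main obstacle is to identify the homotopy groups of the source $\simple G^{\bullet}(\F)_x = \dirlim_{(U,u)\in\Lambda} (\mbb{H}_{\X}\F)(U)$ with $\pi_n \F_x$ compatibly with $\theta_{G^{\bullet}\F}(x)$. For this I would apply the Colimit Lemma \ref{colimitlemma} to the filtered system $\Lambda$: since filtered colimits along $\Lambda$ compute the stalk at $x$, and since the sheafification of the presheaf $U \mapsto H^p(U;\pi_q \F)$ vanishes in positive degrees, one has
$$
\dirlim_{(U,u)\in\Lambda} H^p(U;\pi_q \F) \cong \begin{cases} (\pi_q \F)_x \cong \pi_q \F_x & \text{if } p = 0, \\ 0 & \text{if } p > 0. \end{cases}
$$
Combined with the uniform boundedness on the right, the Colimit Lemma then yields that the colimit spectral sequence collapses onto the axis $p=0$ and that $\pi_n\simple G^{\bullet}(\F)_x \cong \pi_n\F_x$ compatibly with the edge map, which agrees with $\pi_n\theta_{G^{\bullet}\F}(x)$. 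Consequently $\theta_{G^{\bullet}\F}(x)$ is a weak equivalence, establishing compatibility of $\Sset_f$ with $\X$ and hence the finite type of $\X$.
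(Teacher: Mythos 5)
Your proposal is correct in outline and is essentially the second of the two routes the paper's (one-line) proof offers: the paper simply says the proposition ``may be seen as in [MV], Theorem 1.37, or using a spectral sequence argument such as the one given below for spectra.'' You flesh out the spectral-sequence route, but with an extra reduction the paper does not make explicit: instead of running the argument directly on the Postnikov tower $\F \to \holim_n Ex(\widetilde{P}^{(n)}\F)$ as in [MV, 1.37], you first prove compatibility of $\Sset_f$ with $\X$ (condition (3) of Theorem \ref{CartanEilenbergSheaves}, verified stalkwise via the Colimit Lemma exactly as in the unbounded-complex and spectra cases) and then invoke the ``compatible $\Leftrightarrow$ finite type'' theorem of Section 5.3, whose relevant direction is proved independently of this proposition, so there is no circularity. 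This buys you a uniform treatment with the other examples at the cost of leaning on the Moore--Postnikov analysis hidden inside that theorem. Two points deserve care in a full write-up. First, when verifying hypothesis (3) of the Colimit Lemma it is cleaner to check directly that $\dirlim_U \pi^p\pi_q((G^\bullet\F)(U)) \to \pi^p\pi_q((G^\bullet\F)_x)$ is an isomorphism (homotopy groups and cohomotopy commute with filtered colimits), rather than computing the colimit $E_2$-term and the target's homotopy separately and then matching them ``via the edge map''; your collapse onto $p=0$ is consistent with this, since the extra degeneracy of $c(\F_x)\to G^\bullet(\F)_x$ makes $E_2^{p q}(x)$ itself concentrated on the line $p=0$. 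Second, unlike the spectra case, the Bousfield--Kan homotopy spectral sequence of a cosimplicial \emph{space} is fringed: $\pi^p\pi_q$ lives only for $0\le p\le q$ and is nonabelian for $q\le 1$, whereas the Colimit Lemma as stated (Mitchell, Prop.\ 3.3) concerns spectral sequences of abelian groups. The finite-dimension bound $d$ applies to the abelian sheaves $\pi_q\F$ for $q\ge 2$, and the fringe region is bounded for trivial reasons, but $\pi_0$ and $\pi_1$ of the homotopy limit need a separate (standard, but not free) argument; this is the one technical gap between your sketch and a complete proof, and it is precisely what the citation of [MV, 1.37] sidesteps.
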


\begin{proof}
This may be seen as in \cite{MV}, Theorem 1.37, or using an spectral sequence argument such as the one given below for spectra, and previously for unbounded complexes.
\end{proof}

\subsection{Sheaves of fibrant spectra}

\subsubsection{}
Let $\D = \Sp_f$ with the descent structure of \ref{example1simplicial}. Then Theorem \ref{CartanEilenbergSheaves} applies and gives a CE structure to $\Sh{\X}{\Sp_f}$, if $\X$ is of finite cohomological dimension.

\begin{theorem} The descent category $\Sp_f$ is
compatible with any finite cohomological dimension site $\X$.
In this case, properties  \emph{\ref{PropertiesCompat}} hold for $(\Sh{\X}{\Sp_f},\Scal,\W)$.
\end{theorem}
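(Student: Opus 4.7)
The plan is to verify condition (3) of Theorem \ref{CartanEilenbergSheaves}: the simple functor $\holim : \Delta \Sp_f \longrightarrow \Sp_f$ commutes weakly with stalks. That is, for every $\F \in \Sh{\X}{\Sp_f}$ and every point $x$, the canonical map
$$
\theta_{G^\bullet \F}(x) : \dirlim_{(U,u)\in\mathbf{Nbh}(x)} \holim G^\bullet(\F)(U) \longrightarrow \holim \bigl(\dirlim_{(U,u)} G^\bullet(\F)(U)\bigr) = \holim G^\bullet(\F_x)
$$
is a stable weak equivalence. Once this is established, Theorem \ref{CartanEilenbergSheaves} gives compatibility of $\Sp_f$ with $\X$ and the listed properties of \ref{PropertiesCompat}.

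The strategy mimics the proof of Theorem \ref{CEunboundedcomplexes}, replacing the first-filtration spectral sequence of the total product complex with the Bousfield--Kan homotopy spectral sequence of a cosimplicial fibrant spectrum. Recall that for a cosimplicial object $X^\bullet \in \Delta \Sp_f$ one has a natural, conditionally convergent, right half-plane spectral sequence
$$
E_2^{p,q}(X^\bullet) = \pi^p \pi_q(X^\bullet) \quad \Longrightarrow \quad \pi_{q-p} \holim X^\bullet, \qquad p\geq 0,
$$
whose $E_r$-terms for small $r$ are computed by finitely many limits and cokernels of the homotopy groups of the $X^p$, and hence commute with filtered colimits because $\pi_*$ on $\Sp_f$ does.

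I would apply the Colimit Lemma \ref{colimitlemma} to the filtered diagram $\{(G^\bullet\F)(U)\}_{(U,u)\in\Lambda}$, with vertex $(G^\bullet\F)_x = \dirlim_{(U,u)\in\Lambda}(G^\bullet\F)(U)$, where $\Lambda$ is a system of neighbourhoods of finite cohomological dimension $d$ whose existence is guaranteed by the hypothesis on $\X$. The conditions to verify are: conditional convergence (which is standard for Bousfield--Kan); an isomorphism at some $E_r$-page (already true at $r=0$, since $E_0^{p,q}(U) = \pi_q G^p\F(U)$ and $\pi_q$ commutes with filtered colimits of spectra, giving $\dirlim_{(U,u)} \pi_q G^p\F(U) = \pi_q(G^p\F)_x = E_0^{p,q}(x)$); and uniform boundedness on the right of the $E_2$-terms $\{E_2^{p,q}(U)\}_{U\in\Lambda}$.

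The uniform boundedness is the step that uses the finite cohomological dimension, and is where the argument parallels the unbounded complex case most closely. The functor $T$ commutes with $\pi_*$ because $\pi_*$ of spectra commutes with arbitrary products and with filtered colimits, and because the stalks of a presheaf of spectra coincide with those of its associated presheaf-of-sheaves-of-homotopy-groups; consequently $\pi_q \circ G^\bullet = G^\bullet \circ \pi_q$ as cosimplicial sheaves (of abelian groups for $q$ in the usual stable range). Therefore
$$
E_2^{p,q}(U) = \pi^p \pi_q G^\bullet(\F)(U) = H^p(U, \pi_q\F),
$$
which vanishes for $p > d$ uniformly in $U \in \Lambda$ by the finite cohomological dimension hypothesis. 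The Colimit Lemma then yields that $\pi_n(\theta_{G^\bullet\F}(x))$ is an isomorphism for all $n$, hence $\theta_{G^\bullet\F}(x)$ is a stable weak equivalence, and condition (3) of Theorem \ref{CartanEilenbergSheaves} holds. The main obstacle, essentially the only nontrivial point beyond a direct transcription of Theorem \ref{CEunboundedcomplexes}, is checking that $T$ genuinely commutes with $\pi_*$ of sheaves of spectra and that the Bousfield--Kan $E_2$-page indeed identifies with sheaf cohomology with coefficients in the stable homotopy sheaves $\pi_q\F$; the remaining spectral sequence machinery is completely parallel to the complex case.
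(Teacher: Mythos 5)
Your proposal is correct and follows essentially the same route as the paper: verify condition (3) of Theorem \ref{CartanEilenbergSheaves} via the conditionally convergent Bousfield--Kan spectral sequence $E_2^{p,q}=\pi^p\pi_q \Rightarrow \pi_{q-p}\holim$, apply the Colimit Lemma over a system of neighbourhoods of uniformly bounded cohomological dimension, and use the commutation of the Godement resolution with homotopy groups to identify $E_2^{p,q}(U)=H^p(U,\pi_q\F)=0$ for $p>d$. The only cosmetic difference is that you check the colimit isomorphism at what you call the $E_0$-page (really the $E_1$-terms $\pi_qG^p\F(U)$), whereas the paper checks it at $r=2$; since the Colimit Lemma only requires the isomorphism at some page, this is immaterial.
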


\begin{proof}
Let $X^\bullet \in \Delta\Sp_f$ be a cosimplicial object of $\Sp_f$. According to \cite{Bous}, 2.9 (see also \cite{Hir}, remark 18.1.11), we have a conditionally convergent spectral sequence (\cite{Boa})
$$
E_2^{pq} (X^\bullet) = \pi^p\pi_q (X^\bullet) \quad \Longrightarrow\quad \pi_{q-p} (\holim X^\bullet ) \ , \quad p\geq q\geq 0 \ .
$$
As in the case of unbounded complexes, we want to show that the canonical morphism
$$
\theta_\F (x) : \dirlim_{U \in \mathbf{Nbh}(x) } \holim_p (G^p \F)(U) \longrightarrow \holim_p \dirlim_{U \in \mathbf{Nbh}(x) } (G^p \F)(U)
$$
is a weak equivalence of $\Sp_f$ for any sheaf $\F$ and any point $x\in X$ and the colimit may be computed using the neighbourhoods $(U,u)$ in the system $\Lambda$
which exists by hypothesis. Let us verify the hypotheses of the Colimit Lemma: the spectral sequences
$$
E_2^{pq}(U) = \pi^p\pi_q ((G^\bullet \F)(U)) \quad \Longrightarrow \quad \pi_{q-p} (\holim (G^\bullet \F)(U)) \ , \quad p\geq q\geq 0
$$
and
$$
E_2^{pq}(x) = \pi^p\pi_q ((G^\bullet \F)_x) \quad \Longrightarrow \quad \pi_{q-p} (\holim (G^\bullet \F)_x) \ , \quad p\geq q\geq 0
$$
converge conditionally. Moreover, the Godement cosimplicial resolution commutes with homotopy groups
(argue as in the case of cohomology, or see \cite{Th}, page 452, formula (1.26)). Hence, for all $p > d$,
$$
E_2^{pq}(U) = \pi^p\pi_q ((G^\bullet \F)(U))
            = H^p (\Gamma (U, \pi_q (G^\bullet \F)))
            = H^p (\Gamma (U, G^\bullet \pi_q (\F)))
            = H^p (U, \pi_q (\F))
            = 0  \ .
$$
Finally, for $r=2$, we have an isomorphism
\begin{align*}
\dirlim E_2^{pq}(U)    &=    \dirlim \pi^p\pi_q ((G^\bullet \F)(U))
                       =    H^p (\dirlim \Gamma (U, \pi_q (G^\bullet \F)))  \\
                          &=    \pi^p (\pi_q (G^\bullet \F)_x)
                          =    \pi^p \pi_q  ((G^\bullet \F)_x )
                          =    E_2^{pq}(x) \ .
\end{align*}
Hence, the Colimit Lemma tells us that
$$
\pi_{q-p} ((\holim G^\bullet \F)_x)  \longrightarrow  \pi_{q-p} (\holim (G^\bullet \F)_x )
$$
is an isomorphism for all $p\geq q \geq 0$.
\end{proof}

\subsection{Sheaves of filtered complexes}

\subsubsection{} As happens in the case of bounded complexes, the simple functors described in Example \ref{examplefilteredcomplexes} clearly commutes with the formation of stalks. Hence

\begin{theorem}\label{CEfilteredcomplexes} Assume that $\A$  is an abelian category satisfying $($AB4$)^{\ast}$ and $($AB5$)$.
Then, the descent categories $(\mrm{F}\Cochainsp{\A},\mrm{E}_r)$ are compatible with any site $\X$.
In particular, properties  \emph{\ref{PropertiesCompat}} hold for $(\Sh{\X}{\mrm{F}\Cochainsp{\A}},\Scal_r,\W_r)$.
\end{theorem}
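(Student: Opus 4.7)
The plan is to verify condition (3) of Theorem \ref{CartanEilenbergSheaves}, namely that the simple functor $(\mbf{s},\delta_r)$ commutes weakly with stalks, and mimic the argument already used for bounded complexes in Section 5.1 while pushing the filtration through by means of the decalage transfer already described in Example \ref{examplefilteredcomplexes}.

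First I would check that the hypotheses (\ref{hipotesis}) are in force. Since $\A$ satisfies (AB4)$^\ast$ and (AB5), the underlying abelian category $\Cochainsp{\A}$ is closed under arbitrary products and filtered colimits, both exact, and so is $\mrm{F}\Cochainsp{\A}$ (filtrations, products and filtered colimits are all formed degreewise and filtrationwise). The $\mrm{E}_r$-exactness follows because the $E_{r+1}$-term of the spectral sequence associated with a filtration is a subquotient of the complex: filtered colimits commute with such subquotients by (AB5), and arbitrary products do so by (AB4)$^\ast$. Hence conditions (G0)--(G2) hold.

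For condition (3), I would argue first in the case $r=0$. The graded functor $\mrm{Gr}:\mrm{F}\Cochainsp{\A}\longrightarrow\Cochainsp{\A}^{\mathbb{Z}}$ reflects $\mrm{E}_0$-equivalences and satisfies $\mrm{Gr}\,(\mbf{s},\delta_0) = \mbf{s}\,\mrm{Gr}$ by the very definition of $\delta_0$, so it suffices to check that the simple functor of $\Cochainsp{\A}$ commutes (weakly) with stalks on each graded piece. But in $\Cochainsp{\A}$ the simple is a finite direct sum in each degree (by the uniform lower bound $b$), and finite direct sums are finite colimits, so they commute with filtered colimits. Thus commutation with stalks is an exact identity in this case, as in Theorem \ref{CEboundedcomplexes}.

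For the inductive step $r\geq 1$, I would use the decalage functor $\mrm{Dec}:\mrm{F}\Cochainsp{\A}\longrightarrow\mrm{F}\Cochainsp{\A}$, which by Example \ref{examplefilteredcomplexes} satisfies the intertwining relation $\mrm{Dec}\,(\mbf{s},\delta_{r+1}) = (\mbf{s},\delta_r)\,\mrm{Dec}$ and takes $\mrm{E}_{r+1}$ into $\mrm{E}_r$. Because $\mrm{Dec}$ is built out of kernels and quotients of finite filtration steps, it commutes with filtered colimits by (AB5); similarly with arbitrary products by (AB4)$^\ast$. Assuming, as inductive hypothesis, that $(\mbf{s},\delta_r)$ commutes weakly with stalks, the canonical comparison $\theta$ for $(\mbf{s},\delta_{r+1})$ becomes, after applying $\mrm{Dec}$, the corresponding $\theta$ for $(\mbf{s},\delta_r)$, which lies in $\mrm{E}_r$; hence the original lies in $\mrm{E}_{r+1}=\mrm{Dec}^{-1}(\mrm{E}_r)$, giving condition (3) at level $r+1$.

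The main obstacle will be to verify cleanly that the decalage transfer passes through the Godement resolution and the stalk formation, i.e. that the comparison map $\theta_{G^{\bullet}\F}$ for $(\mbf{s},\delta_{r+1})$ really becomes the corresponding one for $(\mbf{s},\delta_r)$ upon applying $\mrm{Dec}$ pointwise on sheaves. This boils down to the compatibility of $\mrm{Dec}$ with $p^{\ast}$, which is guaranteed by its exactness on filtered colimits. Once this identification is set up, the conclusion follows immediately from Theorem \ref{CartanEilenbergSheaves} and the properties of compatible descent categories collected in Section \ref{PropertiesCompat}.
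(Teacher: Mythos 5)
Your proposal is correct, but it takes a considerably more elaborate route than the paper, whose entire argument is the observation that, exactly as for bounded complexes, the simple functor $(\mbf{s},\delta_r)$ commutes with stalks \emph{on the nose}: in each degree $n$ and each filtration level $k$ one has
\begin{equation*}
\delta_r(\mrm{F})^k(\mbf{s}(A)^n)\;=\;\bigoplus_{i+j=n}\mrm{F}^{k-ri}A^{i,j},
\end{equation*}
a \emph{finite} direct sum of subobjects of the $A^{i,j}$, and under (AB5) filtered colimits commute both with finite direct sums and with the formation of these subobjects. Hence the comparison map $\theta_{G^\bullet\F}(x)$ is an actual isomorphism of filtered complexes --- in particular it lies in $\mrm{E}_r$ for every $r$ simultaneously --- and condition (3) of Theorem \ref{CartanEilenbergSheaves} holds with no induction on $r$. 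Your $\mrm{Gr}$/$\mrm{Dec}$ induction is nevertheless sound: $\mrm{Gr}$ reflects $\mrm{E}_0$-equivalences and intertwines $(\mbf{s},\delta_0)$ with the simple of $\Cochainsp{\A}^{\mathbb{Z}}$; $\mrm{Dec}$ is built from kernels and quotients of finitely many filtration steps, so by (AB5) and (AB4)$^{\ast}$ it commutes with filtered colimits and with the Godement functor $T$ (which is made of products and stalks), and $\mrm{E}_{r+1}=\mrm{Dec}^{-1}(\mrm{E}_r)$ together with $\mrm{Dec}\,(\mbf{s},\delta_{r+1})=(\mbf{s},\delta_r)\,\mrm{Dec}$ makes the inductive step work, so the ``main obstacle'' you flag is genuinely surmountable. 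But this machinery buys nothing here; it would be the right strategy only if the commutation with stalks held merely up to $\mrm{E}_r$-equivalence rather than exactly. One merit of your write-up is the explicit verification of hypotheses (\ref{hipotesis}) --- $\mrm{E}_r$-exactness of filtered colimits from (AB5) and of arbitrary products from (AB4)$^{\ast}$, via the fact that the $E_{r+1}$-page is formed from kernels and images --- which the paper leaves implicit.
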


Note that $\Sh{\X}{\mrm{F}\Cochainsp{\A}}$ agrees with the category of filtered complexes of sheaves on $\A$.
For $r=0$, $\Sh{\X}{\mrm{F}\Cochainsp{\A}}[\mrm{E}_0^{-1}]$ is then $\mrm{F}\mc{D}^{\geq b}(\Sh{\X}{\A})$, the filtered derived category of sheaves on $\A$. That is, for $\A = R$-modules, our sheaf cohomology agrees with classic filtered sheaf cohomology.

\section{Varying $\X$ and $\D$}

In this section we are going to show two elementary results regarding the behaviour of our derived functors
$\mathbb{R}f_*$ and $\mathbb{R}\Gamma$ under the action of morphisms of sites $f: \X \longrightarrow \Y$ and coefficients
$\phi: \D \longrightarrow \D'$.

\subsection{Varying $\X$}

\subsubsection{} Throughout the rest of this section, $(\mc{D}, \mathrm{E})$ is a descent category. All the sites appearing here are
assumed to be compatible with it. We are going to prove in our general setting the following result about the variance of sheaf
cohomology under a morphism of sites.

\begin{theorem}\label{variandoX}Let $(\D, \mathrm{E})$ be a descent category and $\X$,  $\Y$ compatible sites. Then, for any continuous morphism $g:\Y\longrightarrow \Z$ and any geometric morphism $f:\X\longrightarrow \Y$ of sites, we have natural isomorphisms of functors
$$
\mbb{R} (g\circ f)_* =  \mbb{R}g_* \circ \mbb{R}f_* = g_* \circ \mbb{R}f_* \ .
$$
Particularly, for any object $V \in \Y$
$$
\mathbb{R}\Gamma (f^{-1}V,\ - )  = \mathbb{R}\Gamma (V,\ - ) \circ \mathbb{R}f_* = \Gamma (V ,\ - ) \circ \mathbb{R}f_* \ .
$$
\end{theorem}

\begin{corollary} With the same hypotheses, we also have natural isomorphisms of functors
$$
\mathbb{R}\Gamma (\X, \F) = \mathbb{R}\Gamma(\Y, \,\mbb{R}f_* \F) = \Gamma(\Y, \,\mbb{R}f_* \F)    \ .
$$
\end{corollary}

The proof requires some technical results that we are going to state and prove first in the next sections.

\subsubsection{} To begin with, notice that, in abelian sheaf cohomology the theorem would typically be proved resorting to a Leray spectral sequence-type argument. Since we are not necessarilly working with abelian sheaves, spectral sequences are not available to us in our general situation, but Grothendieck's \lq\lq chain rule" for derived functors plays an equivalent role and, in fact, proves to be easier to handle.

\begin{lemma}[Chain Rule]\label{chainrule} Let
$$
\A \stackrel{F}{\longrightarrow} \B \stackrel{G}{\longrightarrow} \C
$$
be a pair of functors between CE-categories such that:

\begin{enumerate}
\item[(i)] $F$ preserves strong equivalences and CE-fibrant objects.
\item[(ii)] $G$ sends strong equivalences to weak ones.
\end{enumerate}

Then the right derived functors $\mathbb{R}F, \mathbb{R}G$ and $\mathbb{R}(G\circ F)$ exist and there are natural isomorphisms of functors

$$
\mathbb{R}(G\circ F) = \mathbb{R}G \circ \mathbb{R}F = G \circ \mathbb{R}F \ .
$$
\end{lemma}

\begin{proof} For any object $A\in \A$, we choose a fibrant model $A \longrightarrow M\in \W_\A$ and we have
$$
(\mathbb{R}G\circ\mathbb{R}F)(A) = \mathbb{R}G(FM) \ .
$$
Now, in order to compute this last derived functor, we also choose a CE-fibrant model for $FM$: $FM \longrightarrow N \in \W_\B$. But, since $FM$ is already fibrant by hypothesis, the Whitehead theorem \cite[Th. 2.2.5]{GNPR1} tells us that this last weak equivalence is in fact a strong one. Hence, by hypothesis, $GFM \longrightarrow GN \in \W_\C$. So, we conclude

$$
\mathbb{R}G(FM) = GN = GFM = \mathbb{R}(G\circ F)(A) \ .
$$

And since $FM$ is already fibrant, we also have $\mathbb{R}G(FM) = GFM = (G \circ \mathbb{R}F) A $.
\end{proof}

\subsubsection{} The fact that the (realizable) homotopy limit of a degree-wise local object of a homotopical localization is a local object as well translates to CE-categories as

\begin{lemma}\label{simpledefibrantes} Let $\F^\bullet$ be a cosimplicial sheaf such that $\F^p$ is CE-fibrant
for all $p\geq 0$. Then $\mbf{s}(\F^\bullet)$ is also a CE-fibrant sheaf.
\end{lemma}

\begin{proof}
The resolvent functor $(\mbb{H}_{\X},\rho)$ for $(\Sh{\X}{\D},\mc{S},\mc{W})$ induces a degree-wise functor on $(\Dl\Sh{\X}{\D}$ $=$ $\Sh{\X}{\Dl\D},\mc{S},\mc{W})$, that is  a CE-category in which a cosimplicial sheaf $\F^{\bullet}$ is fibrant if and only if $\rho_{\F^{\bullet}}$ is
degreewise in $\mc{S}$. In particular a cosimplicial sheaf which is degreewise fibrant is also fibrant in $\Sh{\X}{\Dl\D}$.
Now, $\simple\F^{\bullet}$ is fibrant if and only if given a local equivalence $w:\G\longrightarrow \G'$ the induced map
$$w^*: \loc{\Sh{\X}{\D}}{S} (\G',\simple \F^\bullet) \longrightarrow \loc{\Sh{\X}{\D}}{S} (\G,\simple \F^\bullet)$$
is a bijection. But by adjunction the above map is isomorphic to
$$w^*: \loc{\Sh{\X}{\Dl\D}}{S} (c\G', \F^\bullet) \longrightarrow \loc{\Sh{\X}{\Dl\D}}{S} (c\G,\F^\bullet)\ , $$
which is a bijection because $\F^\bullet$ is fibrant.
\end{proof}

\subsubsection{} Now we prove the following result generalizing the well-known fact in the abelian context that $f_*$ preserves injective sheaves.

\begin{lemma}\label{preservaimagenesdirectas} $f_*$ preserves CE-fibrant sheaves.
\end{lemma}

\begin{proof} A geometric morphism $f:\X \longrightarrow \Y$ sends points to points:
if $x = (x^*,x_*)$ is a point of $\X$, then  $(x^* f^*, f_* x_*)$ is a point of $\Y$, to be denoted by $f(x)$.
We assume that there are sets of enough points $X,Y$ in our sites $\X, \Y$ such that $f(X) \subset Y$.

Given a sheaf $\F \in \Sheaf (\X, \D)$, we must show that
$f_*\mathbb{H}_\X\F \in \Sheaf (\Y,\D)$ is fibrant. Since direct images commute with simple functors by definition,

$$
f_* \mbb{H}_{\X} \F = f_* \simple G^\bullet \F= \simple f_* G^\bullet \F \ ,
$$

because of Lemma \ref{simpledefibrantes}, we only need to prove that sheaves like $f_*T_X\F = f_*p_*^Xp^*_X\F$ are CE-fibrant. To this end, we will show that sheaves of the form $f_*p_*^XD$, for any $D \in \D^X$, are fibrant. Being $f_*$ a right adjoint,
it commutes with limits. Then

$$
f_*p_*^XD = f_* \left( \prod_{x\in X}x_*(D_x) \right) = \prod_{x\in X} f(x)_*(D_x) \ .
$$

Since the product of CE-fibrant sheaves is a CE-fibrant sheaf, it suffices to prove that each $f(x)_*(D_x)$ is fibrant. But given any point $y\in\Y$ and any object $D_0\in\mc{D}$, it is indeed true that $y_* D_0$ is fibrant. To see this, consider the object $D\in\mc{D}^Y$ which maps $y$ to $D_0$ and any other point in $Y$ to
the final object $*$ of $\mc{D}$. Therefore, by definition
$$
p_*^YD = \prod_{z\in Y} z_* D_z = y_* D_0\ ,
$$
and the result follows then from the following lemma.
\end{proof}

\begin{lemma}\label{losflacidossonCEfibrantes} For any $D\in \D^Y$, $p^Y_*D \in \Sheaf (\Y,\D)$ is a CE-fibrant sheaf.
\end{lemma}

\begin{proof}
Lemma \ref{DegeneracionExtraGodement} tells us that $\eta_{p^Y_*D} : p^Y_*D \longrightarrow G^\bullet p^Y_*D$ has an
extra degeneracy and hence is a cosimplicial homotopy equivalence. Therefore,
$ p^Y_*D \longrightarrow \simple G^\bullet p^Y_*D = \mathbb{H}_\Y p^Y_*D$ is a global one. That is, the sheaf $p^Y_*D$ satisfies Thomason's descent. Therefore, Corollary \ref{Thomasondescentproperty} tells us it is CE-fibrant.
\end{proof}

\subsubsection{} With everything in place, we can now prove our theorem.

\begin{proof}[Proof of Theorem \ref{variandoX}] The first statement follows directly from
Lemmas \ref{chainrule}  and \ref{preservaimagenesdirectas}. To see the second one, let $V \in \Y$ be any object. Then

\begin{eqnarray*}
\mbb{R}\Gamma (f^{-1}V, \F) &=& \Gamma(f^{-1}V, \mbb{H}_{\X}\F) \qquad \text{by definition of}\ \mathbb{R}\Gamma (f^{-1}V,\F) \\
    &=& \Gamma(V, f_*\mbb{H}_{\X}\F) \qquad \text{by definition of}\ f_*  \\
    &=& \Gamma(V, \mbb{R}f_* \F) \qquad \text{by definition of}\ \mathbb{R}f_* \\
    &=& \mathbb{R}\Gamma (V, \mathbb{R}f_*\F) \qquad \text{because of the chain rule \ref{chainrule}} \ .
\end{eqnarray*}
\end{proof}

\subsection{Varying $\D$}

\subsubsection{} Next we study the preservation of the CE-structures previously obtained on sheaves under a change of category of coefficients $\D$.

That is, we consider a fixed Grothendieck site $\X$ and two categories of coefficients $\D, \D'$. A functor $\phi : \D \longrightarrow \D'$ naturally induces a functor $\phi : \PrSh{\X}{\D}\longrightarrow \PrSh{\X}{\D'}$ between the categories of presheaves. If $\phi$ preserves limits, then it restricts to a functor $\phi : \Sheaf (\X,\D) \longrightarrow \Sheaf (\X,\D')$ between the categories of sheaves.

Now, let us assume that both pairs $(\X, \D)$ and $(\X,\D')$ are compatible, so both categories of sheaves have CE-structures. The next result is a situation where such a $\phi$ preserves these CE-structures.

\begin{theorem} Assume $(\D,\mrm{E})$ and $(\D',\mrm{E}')$ are compatible with site  $\X$ and let
$\phi:\mc{D}\longrightarrow\mc{D'}$ be a functor such that
\begin{enumerate}
 \item[\emph{(1)}] $\phi$ preserves limits.
 \item[\emph{(2)}] If $I$ is a filtered small category, the canonical morphism $\dirlim_I \phi\longrightarrow \phi\dirlim_I$ is in $\mrm{E}'$.
 \item[\emph{(3)}] $\phi(\mrm{E})\subset \mrm{E}'$.
 \item[\emph{(4)}] There is a zigzag of natural weak equivalences connecting $\phi\mbf{s}$ and $\mbf{s}\phi$.
 \end{enumerate}
Then $\phi:\Sh{\X}{\D}\longrightarrow \Sh{\X}{\D'}$ is a morphism of CE-categories. That is, it preserves global and local equivalences and sends fibrant sheaves to
fibrant sheaves.
In particular, if $\Y$ is another site compatible with both descent categories, then, for any continuous morphism $f:\X\longrightarrow \Y$ of sites,
$$ \mbb{R}f_* \phi =  \phi \,\mbb{R}f_*  \,\,\, .$$
\end{theorem}

\begin{proof} (1) ensures that the induced $\phi$ at the level of presheaves restricts to sheaves. We have that $\phi(\mc{S})\subset \mc{S}'$ by (3) , and $\phi(\mc{W})\subset \mc{W}'$ by (2) and (3). Now, (1) implies that
$\phi$ commutes with $p_*$ and (2) that it commutes up to natural weak equivalence with $p^*$, so we have an induced canonical weak equivalence $G^\bullet \phi \longrightarrow \phi G^\bullet$ which in conjunction with (4) yields a canonical isomorphism $\mbb{H}_{\X}\,\phi\longrightarrow \phi\,\mbb{H}_{\X}$ on $\Sh{\X}{\D'}[\mc{S}'^{-1}]$. Consequently, given a sheaf $\F\in \Sh{\X}{\D}$, we conclude that $\phi(\rho_{\F}) : \phi(\F)\longrightarrow \phi\mbb{H}_{\X}(\F)$ is in $\mc{S}'$ if and only if
$\rho_{\phi(\F)}:\phi(\F)\longrightarrow \mbb{H}_{\X}\phi(\F)$ is.

If $\F$ is a fibrant sheaf this means, by Corollary \ref{Thomasondescentproperty}, that $\rho_{\F}$ is in $\mc{S}$, so  $\phi(\rho_{\F})$ is
in $\mc{S}'$. But then $\rho_{\phi(\F)}$ belongs to $\mc{S}'$ as well, which means by Corollary \ref{Thomasondescentproperty} that $\phi(\F)$
is fibrant.
\end{proof}

\begin{corollary}
Under the same hypotheses of the previous proposition, for any object $U\in \X$ and any sheaf $\F \in \Sheaf (\X,\D)$, we have
$$
\mathbb{R}\Gamma (U,\phi\F ) = \phi\mathbb{R}\Gamma (U,\F) \ .
$$
\end{corollary}

\begin{example} As we have seen in the previous section of examples, the descent categories $(\Cochainsp{\A},\mrm{E})$ and $(\mrm{F}\Cochainsp{\A},\mrm{E}_r)$ of (filtered) complexes of Examples \ref{example1bounded}, \ref{examplefilteredcomplexes} are
compatible with any site, provided $\A$ is $($AB4$)^{\ast}$ and $($AB5$)$.
Then the above result applies to both the forgetful functor $U:(\mrm{F}\Cochainsp{\A},\mrm{E}_0)\longrightarrow (\Cochainsp{\A},\mrm{E})$ and the
decalage filtration functor $Dec:(\textbf{F}\Cochainsp{\A},\mrm{E}_{r+1})\rightarrow(\textbf{F}\Cochainsp{\A},\mrm{E}_{r})$. This in particular recovers
the classic result that filtered sheaf hypercohomology and filtered higher direct images agree with the usual abelian ones when we forget the filtrations.
\end{example}

Consequently, we can now extend \ref{transferLema} to categories of sheaves, obtaining a transfer lemma for CE-structures between them.

\begin{proposition}  Assume that $\D$ is closed under products and filtered colimits, and that $(\D',\mrm{E}')$ is a descent category
compatible with the site  $\X$.
If $\psi:\mc{D}\longrightarrow\mc{D'}$ satisfies the hypotheses of the transfer lemma \ref{transferLema} and \emph{(1)}
and \emph{(2)} of the previous proposition, then
\begin{enumerate}
 \item[\emph{(I)}] $(\D,\mrm{E}=\psi^{-1}\mrm{E}')$  is a descent category compatible with $\X$.
 \item[\emph{(II)}] $\psi:\Sh{\X}{\D}\longrightarrow \Sh{\X}{\D'}$ is a morphism of CE-categories.
\end{enumerate}
 \end{proposition}

\begin{proof} By the transfer lemma \ref{transferLema} and the previous proposition, the only statement remaining to be proved is the fact that
the resulting descent category $(\D,\mrm{E}=\psi^{-1}\mrm{E}')$  is compatible with $\X$. First note that by
definition $(\D,\mrm{E})$ satisfies hypotheses (\ref{hipotesis}). Then, using Theorem \ref{CartanEilenbergSheaves}, it suffices to
show that for any $\F\in \Sh{\X}{\D}$, $\rho_{\mbb{H}_{\X}\F}$ is in $\mc{S}$. By definition of $\mrm{E}$, this holds if and only if
$\psi(\rho_{\mbb{H}_{\X}\F})$ is in $\mc{S}'$. But arguing as in the previous proof, this happens if and only if
$\rho_{\mbb{H}_{\X}\psi(\F)}$ is in $\mc{S}'$, which holds because $(\D',\mrm{E}')$ is compatible with $\X$.
\end{proof}

\begin{examples} In fact, the functors in the previous example also satisfy these stronger hypotheses and then may be used to transfer compatibility with the site.

To close the paper, let us briefly describe a classical situation also covered by this transfer lemma. It's a well-known fact that, if $(X,\Ocal_X)$ is a ringed space, then the derived functor $\mathbb{R}\Gamma (X,\F) $ naturally inherits a module structure for any sheaf $\F$ of $\Ocal_X$-modules. But, if we forget this module structure through the forgetful functor $\psi : \mathbf{Mod} \longrightarrow \Ab$, this derived functor agrees with the usual cohomology as an abelian sheaf: $\mathbb{R}\Gamma(X,\psi\F) = \psi\mathbb{R}\Gamma(X,\F)$. In a forthcoming article, we will show that an analogous result holds for sheaves of operad algebras.
\end{examples}

\end{large}
\end{document}